\numberwithin{equation}{section}
\newtheorem{theorem}{Theorem}[section]
\newtheorem{corollary}[theorem]{Corollary}
\newtheorem{lemma}[theorem]{Lemma}
\newtheorem{prop}[theorem]{Proposition}
\newtheorem{remark}[theorem]{Remark}
\theoremstyle{definition}
\newtheorem{defn}[theorem]{Definition}
\newtheorem{example}[theorem]{Example}
\def \begineq{\begin{equation}}
\def \endeq{\end{equation}}
\def \bb{\mathbb}
\def \mf{\mathfrak}
\def\zS{\mathbb S}    
\def\oO{\mathcal O}    
\newcommand{\abs}[1]{\lvert#1\rvert}  
\def \CC{{\bb{C}}}
\def \RR{{\bb{R}}}
\def \TT{{\bb{T}}}
\def \ZZ{{\bb{Z}}}
\def \({\left(}
\def \){\right)}
\def \<{\langle}
\def \>{\rangle}
\def \qed{\hfill $\square$ \vspace{0.03in}}
\begin{document}

\title{Some Aspects of Equivariant LS-category}
\author[M. Bayeh]{Marzieh Bayeh}

\address{Department of Mathematics and Statistics, University of Regina, Regina S4S0A2, Canada.}

\email{bayeh20m@uregina.ca}

\author[S. Sarkar]{Soumen Sarkar}

\address{Department of Mathematics and Statistics, University of Regina, Regina S4S0A2, Canada.}

\email{soumen.sarkar@uregina.ca}

\subjclass[2010]{55M30, 55M99}

\keywords{group action, torus manifold, (equivariant) LS-category}

\abstract 
We study the lower bounds and upper bounds for LS-category and equivariant LS-category. 
In particular we compute both invariants for torus manifolds. 
There are some examples to show the sharpness of conditions in the theorems. 
Moreover the equivariant LS-category of the product space is discussed and counterexamples of some previous results are given.
\endabstract

\maketitle

\section{Introduction}\label{introsco2}

Let $G$ be a compact, Hausdorff, topological group, acting on a Hausdorff topological space $X$. In most cases $G$ is a Lie group acting on a compact manifold $X$. The equivariant LS-category of $X$, denoted by $cat_G(X)$ was introduced by Marzantowicz in \cite{Mar}, as a generalization of classical category of a space \cite{LS}, which is called Lusternik-Schnirelmann category \cite{Ja}. Marzantowicz showed that for a compact Lie group $G$, classical $cat$ of orbit space is a lower bound for $cat_G$,
$$ cat(X/G) \leq cat_G(X).$$
Colman studied the $cat_G(X)$ for finite group $G$ in \cite{Co} and gave an upper bound in terms of the dimension of orbit space and $cat_G$ of the singular set for the action. 
In \cite{HT}, Hurder and T\"{o}ben proved that for a manifold $M$ with a proper $G$-action, where $G$ is a Lie group, the number of components of the fixed point set is a lower bound for $cat_G(M)$. Later $cat_G(X)$ is studied by Colman and Grant \cite{CG}, for a compact Hausdorff topological group $G$, acting continuously on a Hausdorff space $X$.

Similar to definition of classical $cat$, $cat_G(X)$ is defined to be the least number of open subsets of $X$, which form a covering for $X$ and each open subset is equivariantly contractible to an orbit, rather than a point (see Definition \ref{defGcat}).

In this paper we study LS-cat and equivariant LS-cat. We compute these two invariants for locally standard torus manifolds, which are even dimensional smooth manifolds with locally standard action by half-dimensional compact torus (see Definition \ref{qtd02}).  
In Section \ref{sec3}, we study $cat_G(X)$ in terms of fixed point set $X^G$ and $cat_G(X^G)$, and some lower and upper bounds for $cat_G(X)$ are given. Also we study the upper bound for equivariant LS-cat of product space. 
In Section \ref{sec2}, some results on locally standard torus manifolds as well as simply connectedness of torus manifolds are discussed. 
In Section \ref{sec4}, the classical $cat$ of quasitoric manifolds are computed. We show that the equivariant connected sum of quasitoric manifolds does not affect the value of classical $cat$, i.e. for $2n$-dimensional quasitoric manifolds $M_1$ and $M_2$,
$$ cat(M_1 \#_{\TT^k} M_2) = cat(M_1) = cat(M_2) = n+1 \; ,$$
for any $k,n$ except $k=n=2$. Besides we examine the situations that for $4$-dimensional locally standard torus manifold $M$, the equality holds, meaning $ cat(M) = 3 $, see Theorem \ref{catM3}. Moreover the explicit construction of categorical covering for $M$ is also given. The special technique which is used for the construction leads us to generalize the idea for computing LS-cat of locally standard torus manifold in case there exists a triangulation of the orbit space.
In Section \ref{sec5}, $cat_{\TT^n}$ of quasitoric manifolds, as well as their equivariant connected sum are computed. We also prove the inequality for equivariant LS-cat of product space. 
Moreover a lower and upper bound for $cat_{\TT^n}$ of $4$-dimensional locally standard torus manifold are given where lower bound is sharp, see Theorem \ref{qtm4}.
Section \ref{sec6} is dedicated to computation of equivariant LS-cat. There are two counterexamples relevant to the work of Colman and Grant \cite{CG} in the following way. In their paper there are two statements on $cat_G$ of product, one with diagonal action, Theorem 3.15, and another with product action, Theorem 3.16. However there the hypotheses are not sufficient and lead to the counterexamples (but the subsequent results
 in \cite{CG}, in particular Corollary 5.8, are unaffected). Finally the equivariant LS-category of lens spaces is computed.

\section{Equivariant LS-category}\label{sec3}

In this section we prove a number of results for $cat_G(X)$ in terms of the fixed point set $X^G$. We begin by recalling some definitions and fixing some notations. Let $G$ be a compact Hausdorff topological group, acting continuously on a Hausdorff topological space $X$. In this case $X$ is called a $G$-space. For each $x \in X$, the set 
$$\oO(x) = \{ g.x : \; g \in G\}$$
is called the orbit of $x$, and 
$$ G_x = \{g\in G : \; g.x = x\}$$
is called the isotropy group or stabilizer of $x$.
 The set $X/G$ of all equivalence classes determined by the action, and equipped with the quotient topology is called the orbit space.
The set 
$$X^G = \bigg\{ x \in X : \; \forall g \in G , \; g.x = x \; \bigg\}$$
is called the fixed point set of $X$. Here $X^G$ is endowed with subspace topology. We denote the closed interval $[0,1]$ in $\RR$ by $I$ and $I^0 = (0,1)$.
\begin{defn}
Let $X$ be a topological space, and $G$ be a topological group acting on $X$. 
\begin{enumerate}
\item An open subset $U$ of $X$, is called $G$-open set (or $G$-invariant) if $U$ is stable under $G$-action; i.e. $GU\subseteq U$.
\item Let $U$ be a $G$-invariant subset of $X$, the homotopy $H:U \times I \to X$ is called $G$-homotopy, if for every $g \in G$, $x \in U$, and $t \in I$,
$$ gH(x,t) = H(gx,t).$$
\item Let $U$ be a $G$-invariant subset of $X$, then $U$ is called $G$-categorical if there exists a $G$-homotopy $H:U \times I \to X$ such that $H(x,0) = x$ for each $x \in U$, and $H(U,1)$ is a subset of an orbit.
\end{enumerate}
\end{defn}
\begin{defn}\label{defGcat}
A $G$-categorical covering for a $G$-space $X$ is a finite number of $G$-categorical subsets $\{U_i\}_{i=1}^{n}$ that form a covering for $X$. The least value of $n$ for which such a covering exists, is called the equivariant category of $X$, denoted $cat_G(X)$. If no such covering exist, we write $cat_G(X) = \infty$.
\end{defn}
\begin{lemma}\label{GcatFix}
Let $U$ be a $G$-categorical subset of $G$-space $X$, which contains a fixed point $x_0 \in X^G$. Then $U$ is equivariantly contractible to $x_0$. In this case $U$ is called $G$-contractible, and denoted by $U\simeq_{G} x_0$.
\end{lemma}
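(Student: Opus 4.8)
The plan is to exploit the equivariance of the categorical homotopy at the fixed point $x_0$. Since $U$ is $G$-categorical, fix a $G$-homotopy $H\colon U\times I\to X$ with $H(x,0)=x$ for all $x\in U$ and $H(U,1)\subseteq\oO(p)$ for some $p\in X$. The first step is to show that this terminal orbit is forced to be a single fixed point. Indeed, for every $g\in G$ we have $g\cdot H(x_0,1)=H(gx_0,1)=H(x_0,1)$ because $x_0\in X^G$, so $H(x_0,1)\in X^G$ and hence $\oO\big(H(x_0,1)\big)=\{H(x_0,1)\}$. Since $H(x_0,1)\in\oO(p)$ and distinct orbits are disjoint, $\oO(p)=\{H(x_0,1)\}$, and therefore $H(x,1)=H(x_0,1)=:x_1$ for every $x\in U$. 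Thus $H$ is already a $G$-homotopy from the inclusion of $U$ to the constant map with value the fixed point $x_1$.

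The second step is to correct the endpoint from $x_1$ to $x_0$. The key observation is that the path $\gamma\colon I\to X$, $\gamma(t)=H(x_0,t)$, lies entirely in $X^G$: again $g\cdot\gamma(t)=H(gx_0,t)=H(x_0,t)=\gamma(t)$ for all $g\in G$ and $t\in I$, using $x_0\in X^G$. Since $\gamma(0)=x_0$ and $\gamma(1)=x_1$, we splice $H$ with the reverse of $\gamma$: define $K\colon U\times I\to X$ by $K(x,t)=H(x,2t)$ for $t\in[0,\tfrac12]$ and $K(x,t)=\gamma(2-2t)$ for $t\in[\tfrac12,1]$. The two formulas agree at $t=\tfrac12$ (both equal $x_1$), so $K$ is continuous by the pasting lemma, and it satisfies $K(x,0)=x$ and $K(x,1)=\gamma(0)=x_0$.

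It then remains to check that $K$ is a $G$-homotopy. On $[0,\tfrac12]$ this is immediate since $H$ is a $G$-homotopy. On $[\tfrac12,1]$ the map $K(\cdot,t)$ is constant with value $\gamma(2-2t)\in X^G$, so $gK(x,t)=g\gamma(2-2t)=\gamma(2-2t)=K(gx,t)$. Hence $K$ witnesses $U\simeq_G x_0$. I expect the only real subtlety to be the endpoint correction in the second step: one cannot prepend an arbitrary path from $x_1$ to $x_0$ (none need exist, and an arbitrary one need not be equivariant), but the path supplied by $H$ itself automatically lies in $X^G$, which is exactly what makes the splicing equivariant; in particular no connectivity hypothesis on $X$ is required. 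Everything else is a direct unwinding of the definitions.
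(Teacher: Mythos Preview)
Your proof is correct and follows essentially the same approach as the paper: both show that the track $t\mapsto H(x_0,t)$ lies in $X^G$ (forcing the terminal orbit to be a single fixed point), and both splice $H$ with the reverse of this track via the same piecewise formula. Your write-up simply fills in more of the routine verifications (continuity at $t=\tfrac12$ and equivariance of the second half) that the paper leaves implicit.
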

\begin{proof}
Let $H:U \times I \to X$ be a $G$-homotopy, where $H(x,0)=x$ , $H(x,1) \in \oO(z)$ for some $z\in X$. Since 
$ gH(x_0,t) = H(gx_0,t) = H(x_0,t)$, it is easy to see that for all $t\in I$, $H(x_0 ,t) \in X^G$.
Therefore $H(x_0,1) \in X^G$, which implies $ \oO(z) = \{H(x_0,1)\}$.
Define $H' : U \times I \to X$, by 
$$ H'(x,t) = \left\{ 
\begin{array}{lll}
H(x,2t) & : & 0\leq t \leq \frac{1}{2} \\
 & &\\
H(x_0, 2-2t) & : & \frac{1}{2} \leq t \leq 1.
\end{array}
\right. $$
Clearly $H'$ is a $G$-homotopy. The lemma follows.
\end{proof}
~\\
Note that for a $G$-categorical set $U$, which contains a fixed point $x_0$,
 there exists a path $\Phi : I \to X^G$, defined by $\Phi(t) =H(x_0,t)$.
~\\
\begin{defn}
$x_0 \in X^G$ is called an isolated fixed point if there exists a neighborhood $U$ of $x_0$ that does not contain any other fixed points.
\end{defn}
\begin{lemma}\label{isolatpt}
Let $X$ be a Hausdorff space, and $U$ be a $G$-categorical subset that contains an isolated fixed point $x_0$. Then the $G$-homotopy $H:U \times I \to X$ fixes $x_0$, and $x_0$ is the only fixed point of $U$.
\end{lemma}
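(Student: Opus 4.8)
The plan is to leverage the observation made just before this lemma: if $U$ is $G$-categorical with witnessing $G$-homotopy $H\colon U\times I\to X$, and $x_0\in X^G$, then $t\mapsto H(x_0,t)$ is a path $\Phi\colon I\to X^G$, because $gH(x_0,t)=H(gx_0,t)=H(x_0,t)$ forces $H(x_0,t)\in X^G$ for every $t$. So everything reduces to showing $\Phi$ is constant. First I would use that $x_0$ is isolated to pick an open neighborhood $U'$ of $x_0$ in $X$ with $U'\cap X^G=\{x_0\}$. Then $\Phi^{-1}(U')=\Phi^{-1}(\{x_0\})$ is open in $I$; and since $X$ is Hausdorff, $\{x_0\}$ is closed in $X$, so $\Phi^{-1}(\{x_0\})$ is also closed in $I$. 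As this set contains $0$ and $I$ is connected, $\Phi^{-1}(\{x_0\})=I$, i.e. $H(x_0,t)=x_0$ for all $t\in I$. This gives the first assertion.

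For the second assertion, note that $H(x_0,1)=x_0$ together with $H(U,1)\subseteq\oO(z)$ gives $x_0\in\oO(z)$, and since $x_0$ is fixed this forces $\oO(z)=\{x_0\}$, hence $H(U,1)=\{x_0\}$. Now take any fixed point $y\in U\cap X^G$. By the same reasoning as above, $t\mapsto H(y,t)$ is a path in $X^G$, this time joining $y$ to $H(y,1)=x_0$. Applying the identical open-and-closed-in-$I$ argument (the preimage of $U'$ under this path is both open and closed in $I$ and contains $1$) shows the path is constant equal to $x_0$, whence $y=x_0$. Thus $x_0$ is the only fixed point of $U$.

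The argument is short, and there is no serious obstacle; the only place needing care is the verification that the isolating neighborhood $U'$ meets $X^G$ in exactly $\{x_0\}$, so that pulling it back along a path into $X^G$ produces a subset of $I$ that is simultaneously open (because $U'$ is open in $X$) and closed (because $\{x_0\}$ is closed in the Hausdorff space $X$). This is precisely where both hypotheses — isolation of $x_0$ and Hausdorffness of $X$ — enter; the remainder is routine bookkeeping about orbits, using Lemma~\ref{GcatFix}.
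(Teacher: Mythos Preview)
Your proof is correct and follows essentially the same approach as the paper: both arguments show that $\{x_0\}$ is open and closed in $X^G$ and then invoke connectedness to conclude the path $\Phi$ is constant. Your write-up is in fact more complete than the paper's, since you also spell out the second assertion (that $x_0$ is the only fixed point in $U$), which the paper's proof leaves unstated.
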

\begin{proof}
Let $V$ be an open neighborhood of $x_0$ that does not contain any other fixed points, and $\Phi :I \to X^G$ where $\Phi(t)=H(x_0,t)$. The set $\{x_0\} = V \cap X^G$ is open in $X^G$, and also closed (since $X^G$ is Hausdorff). Therefore the set $\{x_0\}$ is a path-connected component of $X^G$.
Thus for all $t \in I$, $\Phi(t) = x_0$ and hence $H$ fixes $x_0$.

\end{proof}
\begin{corollary}
If $X^G \neq \emptyset $ and $cat_G(X) = 1$, then $X$ is $G$-contractible to a point.
\end{corollary}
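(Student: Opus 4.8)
The plan is to unwind the definitions: $cat_G(X)=1$ means $X$ itself is a $G$-categorical subset of $X$. So there is a $G$-homotopy $H\colon X\times I\to X$ with $H(x,0)=x$ for all $x\in X$ and $H(X,1)\subseteq\oO(z)$ for some orbit $\oO(z)$. Since we are told $X^G\neq\emptyset$, pick a fixed point $x_0\in X^G$. The whole space $X$ is then a $G$-categorical set containing the fixed point $x_0$, so Lemma \ref{GcatFix} applies verbatim with $U=X$.

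The key steps, in order, are as follows. First, record that $cat_G(X)=1$ forces the single member of a minimal $G$-categorical covering to be $X$ itself, hence $X$ is $G$-categorical. Second, invoke $X^G\neq\emptyset$ to produce $x_0\in X^G\cap X=X^G$. Third, apply Lemma \ref{GcatFix} to $U=X$ and $x_0$: the lemma produces a $G$-homotopy $H'\colon X\times I\to X$ with $H'(\cdot,0)=\mathrm{id}_X$ and $H'(\cdot,1)\equiv x_0$, i.e. $X\simeq_G x_0$, which is exactly the assertion that $X$ is $G$-contractible to a point.

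There is essentially no obstacle here — the corollary is an immediate specialization of Lemma \ref{GcatFix} to the case $U=X$, once one observes that $cat_G(X)=1$ is the same as saying $X$ is $G$-categorical. The only thing to be careful about is the logical reading of $cat_G(X)=1$: by Definition \ref{defGcat} it means the minimal cardinality of a $G$-categorical covering is $1$, and a one-element covering $\{U_1\}$ of $X$ must have $U_1=X$; this is the small verification that makes the reduction to Lemma \ref{GcatFix} legitimate. After that, the proof is a one-line citation.
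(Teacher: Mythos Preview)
Your proof is correct and matches the paper's intended argument: the paper states this corollary without proof immediately after Lemma~\ref{GcatFix} and Lemma~\ref{isolatpt}, indicating it is meant to follow directly from Lemma~\ref{GcatFix} applied to $U=X$, exactly as you do. Your careful unpacking of why $cat_G(X)=1$ forces the single covering set to equal $X$ is the only nontrivial observation needed, and you handle it correctly.
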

Note that in general case if $cat_G(X) =1$, $X$ may not be necessarily contractible. As for $G=\zS^1$, which acts on $X=\zS^1$, by product, $cat_G(X) =1$, while $X$ is not contractible.
\begin{lemma}
Let $(X,x_0)$ and $(Y,y_0)$ be pointed $G$-spaces. By pointed $G$-space, it means a $G$-space with base point such that the base point is fixed by $G$. Then
$$ cat_G(X \vee Y) \leq cat_G(X) + cat_G(Y) -1. $$
\end{lemma}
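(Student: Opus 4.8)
The plan is to transport the classical argument for $cat(X\vee Y)\le cat(X)+cat(Y)-1$ to the equivariant setting, using Lemma \ref{GcatFix} to handle the wedge point $*=x_0=y_0$, which is a $G$-fixed point of the (Hausdorff) $G$-space $X\vee Y$.

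We may assume $m:=cat_G(X)$ and $n:=cat_G(Y)$ are finite. Pick $G$-categorical coverings $\{U_1,\dots,U_m\}$ of $X$ and $\{V_1,\dots,V_n\}$ of $Y$ and, after renumbering, assume $x_0\in U_1$ and $y_0\in V_1$. View $X$ and $Y$ as the two closed $G$-invariant wings of $X\vee Y$; since the $G$-action restricts to the given ones, for $z\in X$ the orbit $\oO(z)$ is the same whether computed in $X$ or in $X\vee Y$, and likewise for $z\in Y$. For $i\ge 2$ set $U_i':=U_i\setminus\{x_0\}$: as $X$ is Hausdorff, $\{x_0\}$ is closed, so $U_i'$ is open in $X$ and hence in $X\vee Y$ (it misses the $Y$-wing), and it is $G$-invariant because $\{x_0\}$ is a fixed point; restricting the $G$-homotopy showing $U_i$ is $G$-categorical and post-composing with $X\hookrightarrow X\vee Y$ shows $U_i'$ is $G$-categorical in $X\vee Y$. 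Define $V_j':=V_j\setminus\{y_0\}$ for $j\ge 2$ in the same way, and put $W:=U_1\cup V_1\subseteq X\vee Y$; since $U_1$ and $V_1$ both contain $*$, $W$ meets the $X$-wing in $U_1$ and the $Y$-wing in $V_1$, so $W$ is $G$-open. The $m+n-1$ sets $W,U_2',\dots,U_m',V_2',\dots,V_n'$ cover $X\vee Y$ (a point $\ne *$ lies in some $U_i$ or $V_j$, and $*\in W$), so it suffices to prove that $W$ is $G$-categorical.

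By Lemma \ref{GcatFix} there are $G$-homotopies $H^X\colon U_1\times I\to X$ and $H^Y\colon V_1\times I\to Y$ with $H^X_0=\mathrm{id}$, $H^X_1\equiv x_0$, $H^Y_0=\mathrm{id}$, $H^Y_1\equiv y_0$; choose them so that, moreover, the base point is fixed throughout, i.e.\ $H^X(x_0,t)=x_0$ and $H^Y(y_0,t)=y_0$ for all $t$. Then $H^X$ and $H^Y$ agree on the common part $U_1\cap V_1=\{*\}$ (both are the constant path at $*$), so they glue to a $G$-homotopy $K\colon W\times I\to X\vee Y$ with $K_0=\mathrm{id}$ and $K_1(W)=\{*\}$, which is an orbit. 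Hence $W\simeq_G *$, so $W$ is $G$-categorical, and $cat_G(X\vee Y)\le m+n-1=cat_G(X)+cat_G(Y)-1$.

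The main obstacle is precisely the requirement, in the last paragraph, that the $G$-contractions of $U_1$ and $V_1$ be chosen to fix the base point, so that they glue across the wedge: Lemma \ref{GcatFix} on its own only produces a contraction whose trace $t\mapsto H^X(x_0,t)$ is a loop in $X^G$ at $x_0$. Making this loop constant amounts to deforming $H^X$ relative to $U_1\times\{0\}$, which can be carried out once $\{x_0\}\hookrightarrow U_1$ is a $G$-cofibration — automatic for the CW complexes and smooth manifolds considered in this paper, where the base points are non-degenerate. Everything else (openness, $G$-invariance and $G$-categoricity of the truncated pieces, and the bookkeeping) is routine.
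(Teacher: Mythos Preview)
Your approach is the same as the paper's: take minimal $G$-categorical covers of $X$ and $Y$, use Lemma~\ref{GcatFix} to contract the two pieces containing the basepoints to $x_0$ and $y_0$, and merge them into a single categorical set in $X\vee Y$. The paper's proof is in fact much terser than yours---it simply asserts that after identifying $x_0=y_0$ one can show $U_i\cup V_j$ is $G$-contractible to $x_0$, and says nothing about why the remaining $U_{i'}$, $V_{j'}$ are open in the wedge (you handle this correctly by excising the basepoint).

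Your final paragraph is the interesting part, and you are right to flag it: the $G$-contraction $H'$ produced in Lemma~\ref{GcatFix} sends $x_0$ along the loop $\beta\cdot\beta^{-1}$ in $X^G$, not along the constant path, so the two contractions do not literally agree on $\{*\}\times I$ and cannot be glued without further work. Straightening that loop rel endpoints to the constant path requires extending a homotopy off $\{x_0\}\times I$, which is exactly a $G$-cofibration condition on $\{x_0\}\hookrightarrow U_1$ (automatic for $G$-CW complexes and the manifolds treated later in the paper). The paper's one-line proof does not address this point at all, so your version is strictly more careful; your caveat that some non-degeneracy of the basepoints is needed for the gluing step is accurate and worth recording.
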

\begin{proof}
Let $\{U_i\}_{i=1}^{n}$ and $\{V_j\}_{j=1}^{m}$ be $G$-categorical covering for $X$ and $Y$ respectively. Let $x_0 \in U_i$ and $y_0 \in V_j$ for some $i$ and $j$. By Lemma \ref{GcatFix} $U_i \simeq_G x_0$ and $V_j \simeq_G y_0$. By identifying $x_0 = y_0$, one can show that $U_i \cup V_j$ is $G$-contractible to $x_0$ in $X \vee Y$.
\end{proof}
\begin{lemma}
Let $U$ be a $G$-categorical subset in $X$.
If  $U' = U \cap X^G \neq \emptyset$, then $U'$ is a $G$-categorical subset in $X^G$.
\end{lemma}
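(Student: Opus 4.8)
The plan is to reduce the statement to the fixed-point contraction furnished by Lemma~\ref{GcatFix} and then observe that such a contraction never leaves $X^G$. Since $U' = U \cap X^G \neq \emptyset$, I would first pick a fixed point $x_0 \in U'$. By Lemma~\ref{GcatFix}, the $G$-categorical set $U$ is $G$-contractible to $x_0$; that is, there is a $G$-homotopy $H : U \times I \to X$ with $H(x,0) = x$ for all $x \in U$ and $H(x,1) = x_0$ for all $x \in U$.

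The key step is then to show that $H$ carries $U' \times I$ into $X^G$. Indeed, if $x \in U' \subseteq X^G$, then for every $g \in G$ and every $t \in I$ we have $gH(x,t) = H(gx,t) = H(x,t)$, so $H(x,t) \in X^G$. Consequently $H$ restricts to a homotopy $\tilde H : U' \times I \to X^G$ with $\tilde H(\cdot,0) = \mathrm{id}_{U'}$ and $\tilde H(\cdot,1) \equiv x_0$.

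Finally I would assemble the pieces. The set $U' = U \cap X^G$ is open in $X^G$ in the subspace topology, and it is automatically $G$-invariant since $G$ acts trivially on $X^G$; for the same reason $\tilde H$ is trivially a $G$-homotopy, and the orbit of $x_0$ inside $X^G$ is the singleton $\{x_0\}$, which certainly contains $\tilde H(U',1)$. Hence $\tilde H$ witnesses that $U'$ is a $G$-categorical subset of $X^G$.

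There is essentially no serious obstacle here: the only point requiring care is the verification that the contracting homotopy stays inside $X^G$, which is immediate from equivariance at the fixed point, together with the remark that being ``$G$-categorical in $X^G$'' coincides with being contractible within $X^G$ because the restricted $G$-action on $X^G$ is trivial.
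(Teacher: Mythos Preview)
Your proof is correct and follows essentially the same approach as the paper: invoke Lemma~\ref{GcatFix} to obtain a $G$-contraction of $U$ to a fixed point $x_0 \in U'$, then use equivariance to verify that the restriction $H|_{U'\times I}$ lands in $X^G$, yielding the required contraction of $U'$ inside $X^G$. Your version is slightly more explicit in checking openness and the triviality of the induced $G$-action on $X^G$, but the argument is the same.
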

\begin{proof}
It is clear that $U'$ is $G$-invariant. Since $U' \neq \emptyset$, it contains a fixed point $\alpha$
and by Lemma \ref{GcatFix} there exits a $G$-homotopy $H:U \times I \to X$, such that for all $x\in U$ we have $H(x,0) = x$ and $H(x,1) = \alpha$. Take the restriction of $H$ to $U'$
$$ H \bigg|_{U'} =H' : U' \times I \longrightarrow X^G , \quad\quad H'(x,t) = H(x,t).$$
$H'$ is well-defined because for every $x\in U' = U \cap X^G$, we have
$$ g.H'(x,t) = g.H(x,t) = H(g.x,t) = H(x,t) = H'(x,t) $$
for all $g \in G $ and $  t\in I$.
Therefore the inclusion of $U'$ in $X^G$ is $G$-contractible to $\oO(\alpha) = \{\alpha\}$.
\end{proof}
\begin{corollary}\label{lbcat}
Suppose $\{U_i\}_{i=1}^n$ is a $G$-categorical covering of $X$. 
Then ${\{U_i \cap X^G\}_{i=1}^n}$ is a $G$-categorical covering of $X^G$ and therefore
$$ \Big| \pi_0(X^G) \Big| \leq  cat(X^G) = cat_G(X^G) \leq cat_G(X).$$
\end{corollary}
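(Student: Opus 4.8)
The plan is to read the statement as a chain of four comparisons and to establish each in turn, using the lemma immediately preceding the corollary as the main tool. First I would handle the covering assertion: for every index $i$ with $U_i \cap X^G \neq \emptyset$, that lemma shows $U_i \cap X^G$ is a $G$-categorical subset of $X^G$; the indices with $U_i \cap X^G = \emptyset$ are discarded. Since $\{U_i\}_{i=1}^n$ covers $X$ and $X^G \subseteq X$, the retained sets still cover $X^G$, so they form a $G$-categorical covering of $X^G$ of cardinality at most $n$. Taking $n = cat_G(X)$ then gives the rightmost inequality $cat_G(X^G) \leq cat_G(X)$.

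Next I would prove the middle equality $cat(X^G) = cat_G(X^G)$. The observation is that $G$ acts trivially on $X^G$, so each orbit in $X^G$ is a single point, and for any map $H \colon U' \times I \to X^G$ the condition $gH(x,t) = H(gx,t)$ holds automatically (both sides equal $H(x,t)$). Hence a subset of $X^G$ is $G$-categorical in $X^G$ precisely when it is categorical in the classical sense, i.e. contractible within $X^G$ to a point, and the two invariants agree.

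Finally, for the leftmost inequality $|\pi_0(X^G)| \leq cat(X^G)$, I would show that each categorical subset $U' \subseteq X^G$ lies in a single path-component of $X^G$: if $H \colon U' \times I \to X^G$ contracts $U'$ to a point $x_0$, then for every $x \in U'$ the path $t \mapsto H(x,t)$ joins $x$ to $x_0$ inside $X^G$, so $U'$ is contained in the path-component of $x_0$. A categorical covering of $X^G$ by $m$ sets therefore meets at most $m$ path-components, and since it is a covering, $X^G$ has at most $m$ of them; taking $m = cat(X^G)$ yields the bound.

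I do not expect a genuine obstacle, since the corollary is essentially a repackaging of the preceding lemma together with elementary facts about trivial actions. The only places needing a word of care are the bookkeeping of possibly empty intersections $U_i \cap X^G$ in the first step and the explicit use of path-components (as opposed to merely connected subsets) in the last step, each of which can be dispatched in a sentence.
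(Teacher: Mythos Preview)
Your proposal is correct and follows exactly the route the paper intends: the corollary is stated without proof, as an immediate consequence of the preceding lemma (that $U_i \cap X^G$ is $G$-categorical in $X^G$ whenever nonempty), together with the trivial observations about $G$ acting trivially on $X^G$ and categorical sets being path-connected. You have simply written out the details the paper leaves implicit.
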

Note that the previous corollary also follows from \cite{HT}.
\begin{lemma}
If $|X^G|<\infty$, then every $G$-categorical set contains at most one fixed point. So all fixed points are isolated fixed points and we have 
$|X^G|=cat_G(X^G) = cat(X^G) $.
\end{lemma}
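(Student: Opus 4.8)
The plan is to combine the two earlier lemmas about fixed points with Corollary~\ref{lbcat}. First I would observe that if $|X^G| < \infty$, then every point of $X^G$ is an isolated fixed point: since $X$ is Hausdorff, the finite set $X^G$ is discrete in the subspace topology, so around each $x_0 \in X^G$ there is an open neighborhood of $X$ meeting $X^G$ only in $\{x_0\}$, which is exactly the definition of an isolated fixed point.

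Next, let $U$ be an arbitrary $G$-categorical subset of $X$, and suppose toward a contradiction that $U$ contains two distinct fixed points $x_0, x_1 \in X^G$. Since $x_0$ is isolated, Lemma~\ref{isolatpt} applies: the $G$-homotopy $H : U \times I \to X$ witnessing that $U$ is $G$-categorical must fix $x_0$, and moreover $x_0$ is the \emph{only} fixed point of $U$. This contradicts $x_1 \in U \cap X^G$ with $x_1 \neq x_0$. Hence every $G$-categorical set contains at most one fixed point, and (combined with the first paragraph) all fixed points of $X$ are isolated.

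It then remains to identify the three numbers $|X^G|$, $cat_G(X^G)$ and $cat(X^G)$. Since $X^G$ is a finite discrete space on which $G$ acts trivially, the equivariant and classical notions of category coincide and each equals the number of connected components, i.e. $|\pi_0(X^G)| = |X^G|$. Alternatively, this already follows from the chain $|\pi_0(X^G)| \leq cat(X^G) = cat_G(X^G)$ in Corollary~\ref{lbcat} together with the fact that a discrete space with $|X^G|$ points is covered by exactly $|X^G|$ singletons and no fewer. This gives $|X^G| = cat_G(X^G) = cat(X^G)$.

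I do not anticipate a serious obstacle here; the only point requiring a little care is the very first step — deducing that points of a finite fixed-point set are isolated — which uses the Hausdorff hypothesis in an essential way (as in the proof of Lemma~\ref{isolatpt}, where Hausdorffness was used to see that singletons are closed in $X^G$). Everything else is a direct invocation of Lemma~\ref{isolatpt} and Corollary~\ref{lbcat}.
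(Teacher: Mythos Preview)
Your proposal is correct and follows essentially the same approach as the paper: use Hausdorffness plus finiteness of $X^G$ to conclude every fixed point is isolated, then invoke Lemma~\ref{isolatpt}. The paper's proof is a two-sentence sketch that leaves the final equalities $|X^G|=cat_G(X^G)=cat(X^G)$ implicit, whereas you spell them out explicitly via the discreteness of $X^G$ and Corollary~\ref{lbcat}; this is only a difference in level of detail, not in strategy.
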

\begin{proof}
Since $X$ is Hausdorff and $|X^G|< \infty $, every $x\in X^G$ is an isolated fixed point. Thus the statement
follows from Lemma \ref{isolatpt}.

\end{proof}

\begin{lemma}
Let $\alpha$ and $\beta$ be two distinct fixed points belong to a path-component of $X^G$. If $U$ and $W$ are two disjoint subsets of $X$ which are $G$-contractible to $\alpha$ and $\beta$ respectively, then $U \cup W$ is $G$-contractible to $\alpha$.
\end{lemma}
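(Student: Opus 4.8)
The plan is to exploit the fact that $\alpha$ and $\beta$ lie in the same path-component of $X^G$, so there is a path $\gamma : I \to X^G$ with $\gamma(0) = \beta$ and $\gamma(1) = \alpha$. By Lemma \ref{GcatFix} there are $G$-homotopies $H_U : U \times I \to X$ with $H_U(x,0) = x$, $H_U(x,1) = \alpha$ and $H_W : W \times I \to X$ with $H_W(x,0) = x$, $H_W(x,1) = \beta$. Since $U$ and $W$ are disjoint, a homotopy on $U \cup W$ can be defined piecewise: on $U$ we will first apply $H_U$ to slide everything to $\alpha$, and on $W$ we will first apply $H_W$ to slide everything to $\beta$ and then follow the path $\gamma$ to move $\beta$ to $\alpha$ (keeping $U$ parked at $\alpha$ during this second phase). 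Concretely, define $H : (U \cup W) \times I \to X$ by, for $x \in U$, $H(x,t) = H_U(x, 2t)$ for $0 \le t \le \tfrac12$ and $H(x,t) = \alpha$ for $\tfrac12 \le t \le 1$; and for $x \in W$, $H(x,t) = H_W(x, 2t)$ for $0 \le t \le \tfrac12$ and $H(x,t) = \gamma(2t-1)$ for $\tfrac12 \le t \le 1$.

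First I would check that $H$ is well-defined: $U$ and $W$ are disjoint so there is no ambiguity on overlaps, and the two pieces of each branch agree at $t = \tfrac12$ (giving $\alpha$ on the $U$ branch, $\beta$ on the $W$ branch), so $H$ is continuous. Second I would verify the endpoint conditions: $H(x,0) = x$ on all of $U \cup W$, and $H(x,1) = \alpha$ for $x \in U$ while $H(x,1) = \gamma(1) = \alpha$ for $x \in W$, so $H(U \cup W, 1) = \{\alpha\}$, which is an orbit. Third, and this is the one point requiring a little care, I would confirm $G$-equivariance: on the $U$ branch and the $W$ branch for $t \le \tfrac12$ this is inherited directly from $H_U$ and $H_W$ being $G$-homotopies; for $t \ge \tfrac12$ the homotopy takes values $\alpha$ and $\gamma(2t-1)$, both of which lie in $X^G$ and are therefore fixed by every $g \in G$, so $g H(x,t) = H(x,t) = H(gx, t)$ holds trivially (note $gx$ lies in the same branch as $x$ since $U$ and $W$ are $G$-invariant).

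I do not expect a serious obstacle here; the construction is a standard concatenation argument. The only subtlety worth flagging is that one must use a path \emph{inside} $X^G$ rather than an arbitrary path in $X$ — this is exactly what guarantees equivariance of the second phase and is why the hypothesis ``belong to a path-component of $X^G$'' (as opposed to just ``of $X$'') is essential. Once well-definedness, continuity, the endpoint values, and equivariance are checked, $H$ exhibits $U \cup W$ as $G$-contractible to $\alpha$, completing the proof.
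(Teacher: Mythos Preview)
Your argument is correct and is exactly the standard concatenation that the paper has in mind; indeed the paper omits the proof entirely (the lemma is stated and immediately followed by a \qed), so your write-up is strictly more detailed than what appears there. One small remark: your invocation of Lemma~\ref{GcatFix} is unnecessary, since the hypothesis already hands you the $G$-homotopies $H_U$ and $H_W$ directly; and your claim that disjointness alone gives continuity of the piecewise $H$ tacitly uses that $U$ and $W$ are open (so that each is clopen in $U\cup W$), which is the intended context throughout the paper.
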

\qed
\begin{defn}
Let $G$ be a topological group acting on a topological space $X$. The sequence 
$$ \emptyset=A_0 \varsubsetneq A_1 \varsubsetneq A_2 \varsubsetneq \cdots \varsubsetneq A_n = X $$ 
of open sets in $X$ is called $G$-categorical sequence or simply $G$-cat sequence of length $n$ if
\begin{itemize}
\item each $A_i$ is $G$-invariant, and
\item for each $1 \leq i \leq n$, there exists a $G$-categorical subset $U_i$ of $X$, such that
$$ A_i - A_{i-1} \subset U_i .$$
\end{itemize}
A $G$-cat sequence of length $n$ is called minimal if there exists no $G$-cat sequence with smaller length in $X$.
\end{defn}
\begin{lemma}\label{G-seq}
Let $G$ be a topological group acting on a topological space $X$. Then there exists a minimal $G$-cat sequence of length $n$ in $X$, if and only if
$$ cat_G(X) = n.$$
\end{lemma}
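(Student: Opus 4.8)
The plan is to establish the two inequalities $\ell \le cat_G(X)$ and $cat_G(X) \le \ell$, where $\ell$ denotes the length of a minimal $G$-cat sequence in $X$; together these show that a (finite) minimal $G$-cat sequence exists precisely when $cat_G(X)$ is finite, and in that case its length equals $cat_G(X)$, which is exactly the content of Lemma \ref{G-seq}.

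For the first inequality, assume $cat_G(X) = n$ and fix a $G$-categorical covering $\{U_i\}_{i=1}^{n}$ of $X$. I would set $A_i = U_1 \cup \cdots \cup U_i$ for $0 \le i \le n$, so that $A_0 = \emptyset$ and $A_n = X$. Each $A_i$ is open and $G$-invariant, being a union of $G$-open $G$-invariant sets; moreover $A_i - A_{i-1} \subseteq U_i$ with $U_i$ $G$-categorical, and the chain is increasing. The only axiom that can fail is strictness of the inclusions. Whenever $A_{i-1} = A_i$ I would simply delete $A_i$ from the chain; since then $A_{i+1} - A_{i-1} = A_{i+1} - A_i \subseteq U_{i+1}$, after re-indexing one still has a $G$-cat sequence, now one term shorter and witnessed by a sub-list of the same $U_j$'s. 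Iterating this \emph{pruning} step terminates in a genuine $G$-cat sequence of some length $\le n$. Hence the set of lengths of $G$-cat sequences in $X$ is nonempty and bounded above by $n$, so a minimal one exists and $\ell \le n = cat_G(X)$.

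For the reverse inequality, suppose $\emptyset = A_0 \varsubsetneq A_1 \varsubsetneq \cdots \varsubsetneq A_m = X$ is a minimal $G$-cat sequence, with witnessing $G$-categorical sets $U_i \supseteq A_i - A_{i-1}$. Because $A_0 = \emptyset$, telescoping gives $X = A_m = \bigcup_{i=1}^{m}\bigl(A_i - A_{i-1}\bigr) \subseteq \bigcup_{i=1}^{m} U_i$, so $\{U_i\}_{i=1}^{m}$ is a $G$-categorical covering of $X$, whence $cat_G(X) \le m = \ell$. Combining with the previous paragraph forces $\ell = cat_G(X) = n$, which proves both directions. (The same telescoping argument shows that if $cat_G(X) = \infty$ then no finite $G$-cat sequence exists, so the finite equivalence stated in the lemma is the whole story.)

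I do not anticipate a serious obstacle: the argument is entirely formal and rests only on the definitions of $G$-categorical set, $G$-invariance, and $G$-categorical covering. The single point that needs care is the pruning step, i.e. checking that collapsing a non-strict inclusion preserves both the defining properties of a $G$-cat sequence and the covering property of the associated categorical sets; everything else is immediate.
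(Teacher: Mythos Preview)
Your argument is correct and is exactly the standard proof the paper defers to (the paper's own proof is a one-line citation of \cite[Lemma~1.36]{CLOT}, noting the equivariant case is analogous). One harmless wording slip: the set of all lengths of $G$-cat sequences is not bounded above by $n$ in general, but your construction shows it is nonempty and contains an element at most $n$, which is all you need for $\ell \le n$.
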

\begin{proof}
This is analogous to the proof for classical cat \cite[Lemma 1.36]{CLOT}. 
\end{proof}
\begin{defn}
 A $G$-$path$ from an orbit $\oO(x)$ to an orbit $\oO(y)$ is a $G$-homotopy 
$H: \oO(x) \times I \to X$ such that the following hold:
\begin{enumerate}
\item $H_0$ is the inclusion of $\oO(x)$ in $X$.
\item $H_1(\oO(x)) \subseteq \oO(y)$.
\end{enumerate}
\end{defn}
\begin{lemma}(Lemma 3.2, \cite{HT})\label{ht}
Let $H: \oO(x) \times I \to X$ be a $G$-path in $X$ and $x_t= H(x, t)$. Then $G_x \subseteq G_{x_t}$
for all $0 \leq t \leq 1$. 
\end{lemma}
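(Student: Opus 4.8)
The plan is to reduce the statement to a direct application of the equivariance built into the notion of a $G$-homotopy. The key observation is that the first variable of $H$ ranges over the \emph{entire} orbit $\oO(x)$, so the defining relation $g.H(z,t)=H(g.z,t)$ is available for every $z\in\oO(x)$, in particular for $z=x$ itself.

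Concretely, I would fix $t\in[0,1]$ and pick an arbitrary $g\in G_x$, so that $g.x=x$. Applying the $G$-homotopy relation at $z=x$ gives
$$ g.x_t = g.H(x,t) = H(g.x,t) = H(x,t) = x_t, $$
hence $g\in G_{x_t}$. Since $g\in G_x$ and $t\in[0,1]$ were arbitrary, this yields $G_x\subseteq G_{x_t}$ for all $0\le t\le 1$, which is the assertion.

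There is essentially no obstacle here: the argument is a one-line computation once the definitions are unwound, and it uses neither continuity of $H$ nor the boundary conditions ($H_0$ the inclusion and $H_1(\oO(x))\subseteq\oO(y)$) — the conclusion in fact holds for any $G$-homotopy defined on $\oO(x)\times I$. The only point worth flagging is the domain issue just mentioned: the step $H(g.x,t)=H(x,t)$ requires that $g.x$ lie in the domain of $H$, which is guaranteed precisely because the domain is the full orbit $\oO(x)$. Finally, one may note the intended use of the lemma: specializing to $t=1$ and recalling $x_1\in\oO(y)$ shows that $G_x$ is contained in a conjugate of $G_y$, i.e. isotropy groups can only grow (up to conjugacy) along $G$-paths.
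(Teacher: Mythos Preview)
Your argument is correct and is exactly the standard one-line proof: equivariance of $H$ together with $g.x=x$ immediately gives $g.x_t=x_t$. The paper itself does not prove this lemma but simply quotes it from \cite{HT}, so there is nothing further to compare.
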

\begin{lemma}\label{ghp}
 Let $\oO(x)$ and $\oO(y)$ be two distinct orbits in a $G$-space $X$. If $\oO(x)$ and $\oO(y)$ both sit inside a
 $G$-categorical subset, then there exist an orbit $\oO(z)$ such that there are $G$-paths from $\oO(x)$ to
 $\oO(z)$ and $\oO(y)$ to $\oO(z)$.
 
\end{lemma}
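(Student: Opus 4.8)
The plan is to exploit the fact that a $G$-categorical subset $U$ containing both $\oO(x)$ and $\oO(y)$ comes equipped with a single $G$-homotopy $H\colon U\times I\to X$ with $H_0=\mathrm{id}_U$ and $H_1(U)\subseteq\oO(w)$ for some orbit $\oO(w)$, and simply to restrict $H$ to each of the two orbits. First I would set $\oO(z):=\oO(w)$, the orbit into which $U$ is $G$-contracted. Then I would define $H^x := H|_{\oO(x)\times I}$ and $H^y := H|_{\oO(y)\times I}$. Since $\oO(x)$ and $\oO(y)$ are $G$-invariant subsets of $U$, each restriction is again a $G$-homotopy; moreover $H^x_0$ is the inclusion of $\oO(x)$ in $X$ and $H^x_1(\oO(x))\subseteq H_1(U)\subseteq\oO(z)$, which is exactly the definition of a $G$-path from $\oO(x)$ to $\oO(z)$, and symmetrically for $\oO(y)$.

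The only genuine content beyond this restriction is checking that $\oO(z)$ is legitimately a single orbit, i.e.\ that $H_1(U)$ lies in one orbit (this is built into the definition of $G$-categorical), and that the $G$-equivariance of $H$ survives restriction to a $G$-invariant subset, which is immediate from the identity $gH(p,t)=H(gp,t)$ applied to $p\in\oO(x)$. I would also remark that by Lemma \ref{ht}, automatically $G_x\subseteq G_z$ and $G_y\subseteq G_z$, though this is not needed for the statement itself.

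The main (and essentially only) obstacle is purely cosmetic: one must make sure the chosen target orbit is the \emph{same} orbit $\oO(z)$ for both $G$-paths, not two a priori different orbits. This is automatic here because both $\oO(x)$ and $\oO(y)$ sit inside the \emph{same} $G$-categorical subset $U$, so the single homotopy $H$ contracting $U$ forces a common target $\oO(z)=\oO(w)$. Hence the lemma follows with essentially no computation.
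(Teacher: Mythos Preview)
Your argument is correct and is precisely the unpacking of the paper's one-line proof, which merely states that the lemma ``is clear from the definition of $G$-categorical open subset.'' You have simply made explicit what that sentence means: restrict the single $G$-homotopy contracting $U$ onto $\oO(w)$ to each of the two orbits, thereby obtaining $G$-paths from $\oO(x)$ and $\oO(y)$ to the common target $\oO(z)=\oO(w)$.
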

\begin{proof}
It is clear from the definition of $G$-categorical open subset. 
\end{proof}
\begin{defn}
A $G$-space $X$ is called $G$-connected if for every closed subgroup $H$ of $G$, $X^H$ is path-connected.
\end{defn}
\begin{lemma}\label{orbit}
(\cite{CG} Lemma 3.14) Let $X$ be $G$-connected, and let $x,y \in X$ such that $G_x \subset G_y$. Then there exists a $G$-path from $\oO(x)$ to $\oO(y)$.
\end{lemma}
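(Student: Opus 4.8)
The plan is to produce the $G$-path explicitly, by promoting a path in a suitable fixed-point set to an equivariant homotopy, using $G$-connectedness applied to the isotropy subgroup $H = G_x$. This is the argument of Lemma 3.14 in \cite{CG}, and it proceeds in three steps: choose the path, define the equivariant homotopy on the orbit, and verify continuity.

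First I would note that since $G_x \subseteq G_y$, every element of $G_x$ fixes $y$, so both $x$ and $y$ lie in $X^{G_x}$. Because $X$ is $G$-connected, $X^{G_x}$ is path-connected, and I choose a path $\gamma : I \to X^{G_x}$ with $\gamma(0) = x$ and $\gamma(1) = y$; by construction $g\cdot\gamma(t) = \gamma(t)$ for all $g \in G_x$ and all $t \in I$.

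Next I would define $H : \oO(x) \times I \to X$ by $H(g\cdot x, t) = g\cdot\gamma(t)$. The point needing care is well-definedness: if $g\cdot x = g'\cdot x$, then $g^{-1}g' \in G_x$, and since $\gamma(t)$ is $G_x$-fixed this forces $g\cdot\gamma(t) = g'\cdot\gamma(t)$. Granting this, the remaining properties are formal: $H(h\cdot(g\cdot x), t) = (hg)\cdot\gamma(t) = h\cdot H(g\cdot x, t)$ gives the $G$-homotopy condition; $H(g\cdot x, 0) = g\cdot x$ shows $H_0$ is the inclusion of $\oO(x)$; and $H(g\cdot x, 1) = g\cdot y \in \oO(y)$ shows $H_1(\oO(x)) \subseteq \oO(y)$.

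The main obstacle is continuity of $H$, and this is where compactness of $G$ is used. Since $G$ is compact and $X$ is Hausdorff, the isotropy group $G_x$ is closed and the natural continuous bijection $G/G_x \to \oO(x)$ is a homeomorphism, so it suffices to show that $(gG_x, t) \mapsto g\cdot\gamma(t)$ is continuous on $G/G_x \times I$. Precomposed with the quotient map $G \times I \to G/G_x \times I$, this equals the composite of $\mathrm{id}_G \times \gamma$ with the action $G \times X \to X$, hence is continuous; and since $I$ is locally compact Hausdorff, $G \times I \to G/G_x \times I$ is itself a quotient map, so the descended map, and therefore $H$, is continuous. This exhibits the required $G$-path from $\oO(x)$ to $\oO(y)$.
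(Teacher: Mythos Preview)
Your proposal is correct and is precisely the argument of Lemma 3.14 in \cite{CG}; note that the paper itself gives no proof of this lemma, merely citing \cite{CG}, so there is nothing further to compare.
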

\begin{lemma}
Let $X$ and $Y$ be $G$-connected. Then $X \times Y$ with diagonal action is $G$-connected.
\end{lemma}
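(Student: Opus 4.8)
The plan is to reduce the statement to the elementary fact that a product of two path-connected spaces is path-connected, once we have identified the $H$-fixed point set of the diagonal action.

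First I would fix an arbitrary closed subgroup $H$ of $G$ and unravel the diagonal action $g.(x,y) = (g.x,\, g.y)$. A point $(x,y)$ lies in $(X \times Y)^H$ if and only if $h.x = x$ and $h.y = y$ for every $h \in H$, which is to say $x \in X^H$ and $y \in Y^H$. Hence
$$ (X \times Y)^H = X^H \times Y^H . $$
This identity is immediate from the definition of the diagonal action.

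Next, since $X$ and $Y$ are $G$-connected, both $X^H$ and $Y^H$ are path-connected. I would then invoke the standard point-set fact that a finite product of path-connected spaces is path-connected: given $(x_0,y_0)$ and $(x_1,y_1)$ in $X^H \times Y^H$, pick a path $\gamma$ in $X^H$ from $x_0$ to $x_1$ and a path $\delta$ in $Y^H$ from $y_0$ to $y_1$, and form $t \mapsto (\gamma(t), \delta(t))$, which is a path in $X^H \times Y^H$ joining the two chosen points. Therefore $(X \times Y)^H = X^H \times Y^H$ is path-connected.

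Since $H$ was an arbitrary closed subgroup of $G$, this shows that $X \times Y$ with the diagonal action is $G$-connected, which completes the proof. There is essentially no real obstacle here; the only point that requires any attention is the fixed-point identity $(X\times Y)^H = X^H \times Y^H$, and that follows directly from the definition of the diagonal action, after which everything is routine topology.
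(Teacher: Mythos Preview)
Your proof is correct and follows exactly the same approach as the paper: the paper's proof consists solely of the observation that $(X \times Y)^H = X^H \times Y^H$ for any closed subgroup $H$, leaving the path-connectedness of the product implicit. You have simply spelled out the routine details the paper omits.
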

\begin{proof}
If $H$ is a closed subgroup of $G$, then $(X \times Y)^H = X^H \times Y^H$.
\end{proof}
\begin{lemma}\label{G-contract}
Let $X$ be a $G$-connected space with $\alpha \in X^G \neq \emptyset$. Then every $G$-categorical subset $U$ in $X$ is equivariantly contractible to $\alpha$.
\end{lemma}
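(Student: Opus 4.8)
The plan is to show that any $G$-categorical subset $U$ in a $G$-connected space $X$ containing a fixed point $\alpha$ is equivariantly contractible to that particular fixed point $\alpha$. Let $H : U \times I \to X$ be the $G$-homotopy witnessing that $U$ is $G$-categorical, so $H_0 = \mathrm{id}_U$ and $H_1(U) \subseteq \oO(z)$ for some $z \in X$. Since $\alpha \in U$ is fixed, the argument of Lemma \ref{GcatFix} applies verbatim: $H(\alpha, t) \in X^G$ for all $t$, hence $\oO(z) = \{H(\alpha,1)\}$ is a single fixed point, call it $\beta$. Thus $H$ already $G$-contracts $U$ to $\beta$, and it remains to produce a $G$-contraction to $\alpha$.

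First I would observe that because $X$ is $G$-connected, $X^G$ (taking $H = G$ in the definition) is path-connected, so $\alpha$ and $\beta$ lie in the same path-component of $X^G$; let $\gamma : I \to X^G$ be a path with $\gamma(0) = \beta$, $\gamma(1) = \alpha$. Composing the $G$-contraction of $U$ to $\beta$ with the concatenation by $\gamma$, one obtains a homotopy $H' : U \times I \to X$ with $H'_0 = \mathrm{id}_U$ and $H'_1 \equiv \alpha$, exactly as in the proof of Lemma \ref{GcatFix} with the point-path $\Phi$ replaced by $\gamma$. The only thing to check is that $H'$ is a $G$-homotopy: on the first half this is inherited from $H$, and on the second half each map $x \mapsto \gamma(2-2t)$ is constant with value in $X^G$, hence trivially $G$-equivariant, so $gH'(x,t) = H'(gx,t)$ holds throughout.

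I do not expect a serious obstacle here; the statement is essentially a combination of Lemma \ref{GcatFix} with the path-connectedness of $X^G$ guaranteed by $G$-connectedness. The one point deserving a line of care is the matching of the two pieces of $H'$ at $t = \tfrac12$: both sides equal $H(\alpha,1) = \beta = \gamma(0)$ for the relevant arguments — wait, on the first half at $t=\tfrac12$ we get $H(x,1)\in\oO(\beta)=\{\beta\}$ for all $x\in U$, and on the second half at $t=\tfrac12$ we get $\gamma(0)=\beta$, so the two definitions agree and $H'$ is continuous. Hence $U \simeq_G \alpha$, which is the assertion.
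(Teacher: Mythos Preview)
Your argument has a genuine gap: you assume $\alpha \in U$, but the lemma does not say this. The hypothesis is only that $\alpha \in X^G$, i.e.\ $\alpha$ is a fixed point of $X$; nothing forces $\alpha$ (or any other fixed point) to lie in the particular $G$-categorical set $U$. Without a fixed point in $U$, the appeal to Lemma~\ref{GcatFix} breaks down and you cannot conclude that the terminal orbit $\oO(z)$ is a single point $\beta$; in general $\oO(z)$ may well be a nontrivial orbit. Consequently the second half of your homotopy, which traces a path $\gamma$ inside $X^G$, does not glue to the first half, since $H(x,1)$ need not equal $\beta$ for all $x$.

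The paper's proof avoids this by using the full strength of $G$-connectedness, not just the path-connectedness of $X^G$. Since $G_z \subset G = G_\alpha$ trivially, Lemma~\ref{orbit} supplies a $G$-path $E : \oO(z) \times I \to X$ from $\oO(z)$ to $\oO(\alpha) = \{\alpha\}$. One then concatenates: first run the given $G$-homotopy $F$ from $U$ onto $\oO(z)$, then apply $E(F(x,1),\,\cdot\,)$ to carry each point of $\oO(z)$ equivariantly to $\alpha$. The resulting map is a $G$-homotopy because $E$ is, and continuity at the join holds since both pieces agree with $F(x,1)$ there. This is the missing ingredient in your argument: you need an equivariant contraction of the whole orbit $\oO(z)$ to $\alpha$, not merely a path in $X^G$ between two fixed points.
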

\begin{proof}
Let $F:U\times I \to X$ be a $G$-homotopy such that $F(x,0) = x$ and $F(x,1) \in \oO(z)$, for some $z \in X$. Since $G_z$ is a subset of $G_{\alpha}=G$, and $X$ is $G$-connected, by Lemma \ref{orbit}, there exists a $G$-homotopy $E:\oO(z) \times I \to X$ so that $E(y,0)=y$ and $E(y,1) = \alpha$. Define the desired $G$-homotopy $H:U\times I \to X$ by

$$H(x,t) = \left\{
\begin{array}{ll}
F(x,2t), & 0\leq t\leq \frac{1}{2}\\
E(F(x,1) , 2t-1), &  \frac{1}{2} \leq t \leq 1
\end{array} \right.
$$
and the lemma follows.
\end{proof}
By using Lemma \ref{G-contract} one can show that if $X$ is a $G$-connected space with $\alpha \in X^G \neq \emptyset$, then for every two disjoint $G$-categorical subset $U$ and $W$ in $X$, $U\cup W$ is equivariantly contractible to $\alpha$. Also for every $G$-categorical subset $V$ in $Y$, where $Y$ is another $G$-connected space with $\beta \in Y^G \neq \emptyset$, $U \times V$ is equivariantly contractible to $(\alpha, \beta)$.\\
\begin{defn}~
\begin{itemize}
\item A topological space $X$ is called completely normal if for every two subsets $A$ and $B$ of $X$ with
$$\overline{A} \cap B = \emptyset \quad, \quad A \cap \overline{B} = \emptyset, $$
there exist two disjoint open subsets containing $A$ and $B$.
\item A $G$-space $X$ is called $G$-completely normal if for every two $G$-invariant subsets $A$ and $B$ of $X$ with
$$\overline{A} \cap B = \emptyset \quad, \quad A \cap \overline{B} = \emptyset $$
there exist two disjoint $G$-invariant subsets containing $A$ and $B$.
\end{itemize}
~\\
Note that every metric space is completely normal.
\end{defn}
\begin{lemma}(\cite{CG} Lemma 3.12 )\label{GcomNor}
If $X$ is a completely normal $G$-space, then $X$ is $G$-completely normal.
\end{lemma}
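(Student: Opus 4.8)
The plan is to deduce the equivariant separation from the non-equivariant one. First I would apply complete normality of $X$ to the given pair $A,B$ (which by hypothesis satisfy $\overline A\cap B=\emptyset$ and $A\cap\overline B=\emptyset$) to obtain disjoint open sets $U\supseteq A$ and $V\supseteq B$. These need not be $G$-invariant, so the second idea is to shrink each of them to its largest $G$-invariant open subset, exploiting the standing assumption that $G$ is compact.

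Concretely, for an open set $W\subseteq X$ I would set $\widetilde W:=\{x\in X:\ Gx\subseteq W\}$. It is immediate that $\widetilde W$ is $G$-invariant (since $G(gx)=Gx$) and that it is the largest $G$-invariant subset of $W$. The point that needs an argument — and the only real obstacle here — is that $\widetilde W$ is open. For this I would use continuity of the action map $\theta\colon G\times X\to X$: the set $\theta^{-1}(W)$ is open in $G\times X$, and for $x\in\widetilde W$ the slice $G\times\{x\}$ lies inside it, so by the tube lemma (valid because $G$ is compact) there is an open neighbourhood $N$ of $x$ with $G\times N\subseteq\theta^{-1}(W)$, i.e. $GN\subseteq W$; hence $N\subseteq\widetilde W$. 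Without compactness $\widetilde W=\bigcap_{g\in G}gW$ need not be open, which is exactly why the hypothesis on $G$ is essential.

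Finally, since $A$ is $G$-invariant and $A\subseteq U$, every $a\in A$ satisfies $Ga\subseteq A\subseteq U$, so $A\subseteq\widetilde U$; likewise $B\subseteq\widetilde V$. Because $\widetilde U\subseteq U$, $\widetilde V\subseteq V$, and $U\cap V=\emptyset$, the open $G$-invariant sets $\widetilde U$ and $\widetilde V$ are disjoint and contain $A$ and $B$ respectively, which establishes that $X$ is $G$-completely normal. Everything beyond the openness of $\widetilde W$ is formal, so that tube-lemma step is where I would concentrate the care.
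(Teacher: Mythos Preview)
Your argument is correct. Note, however, that the paper does not actually prove this lemma: it is quoted verbatim from \cite{CG} (Lemma~3.12 there) and left without proof. Your approach---apply ordinary complete normality to the pair $(A,B)$, then pass from each separating open set $W$ to its $G$-core $\widetilde W=\{x:Gx\subseteq W\}=\bigcap_{g\in G}gW$, which is open by the tube lemma since $G$ is compact---is the standard proof and is presumably what \cite{CG} does as well. The only substantive step, as you correctly flag, is the openness of $\widetilde W$, and your tube-lemma justification is exactly right; everything else is formal.
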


\begin{theorem}\label{GcatProd}
Let $X$ and $Y$ be $G$-connected such that $X\times Y$ is completely normal. If $X^G \neq \emptyset$ and $Y^G \neq \emptyset$, then 
$$ cat_G(X\times Y) \leq cat_G(X) + cat_G(Y)-1 , $$
where $X\times Y$ is given the diagonal $G$-action.
\end{theorem}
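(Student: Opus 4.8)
The plan is to build an explicit $G$-categorical covering of $X\times Y$ with $cat_G(X)+cat_G(Y)-1$ members, by the standard ``staircase'' argument adapted to the equivariant, $G$-connected setting. First I would fix $\alpha\in X^G$ and $\beta\in Y^G$, set $n=cat_G(X)$ and $m=cat_G(Y)$ (both finite, else there is nothing to prove), and use Lemma~\ref{G-seq} to choose minimal $G$-cat sequences
$$ \emptyset=A_0\subsetneq A_1\subsetneq\cdots\subsetneq A_n=X,\qquad \emptyset=B_0\subsetneq B_1\subsetneq\cdots\subsetneq B_m=Y, $$
with $G$-categorical sets $U_i\supseteq A_i\setminus A_{i-1}$ in $X$ and $V_j\supseteq B_j\setminus B_{j-1}$ in $Y$. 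Since $X,Y$ are $G$-connected with $\alpha\in X^G$, $\beta\in Y^G$, Lemma~\ref{G-contract} and the observation following it show that each product $U_i\times V_j$ with the diagonal action is equivariantly contractible to $(\alpha,\beta)$; restricting that $G$-homotopy, the same holds for every $G$-invariant open subset of $U_i\times V_j$.

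Next I would put, for $0\leq\ell\leq n+m-1$,
$$ C_\ell=\bigcup_{\substack{0\leq i\leq n,\ 0\leq j\leq m\\ i+j\leq\ell+1}}A_i\times B_j, $$
an open $G$-invariant subset of $X\times Y$ for the diagonal action (each $A_i\times B_j$ is diagonally invariant). One checks $C_0=\emptyset$ and $C_{n+m-1}=X\times Y$, and that the ``layer'' $C_\ell\setminus C_{\ell-1}$ is covered by the sets $Q_{ij}:=(A_i\times B_j)\setminus C_{\ell-1}$ with $i+j=\ell+1$, $1\leq i\leq n$, $1\leq j\leq m$: a point of $C_\ell\setminus C_{\ell-1}$ lies in some $A_i\times B_j$ with $i+j$ necessarily equal to $\ell+1$, hence outside both $A_{i-1}\times B_j$ and $A_i\times B_{j-1}$ (which sit inside $C_{\ell-1}$), so $Q_{ij}\subseteq(A_i\setminus A_{i-1})\times(B_j\setminus B_{j-1})\subseteq U_i\times V_j$. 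Each $Q_{ij}$ is $G$-invariant.

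The heart of the argument is to separate, for fixed $\ell$, the finitely many pieces $\{Q_{ij}\}_{i+j=\ell+1}$ from one another. If $i+j=i'+j'=\ell+1$ with $i<i'$ (so $j>j'$), then in the $X$-coordinate $Q_{ij}\subseteq A_i\times Y\subseteq A_{i'-1}\times Y$ while $Q_{i'j'}\subseteq(X\setminus A_{i'-1})\times Y$, a closed set, whence $Q_{ij}\cap\overline{Q_{i'j'}}=\emptyset$; symmetrically, in the $Y$-coordinate $Q_{i'j'}\subseteq X\times B_{j-1}$ while $\overline{Q_{ij}}\subseteq X\times(Y\setminus B_{j-1})$, whence $\overline{Q_{ij}}\cap Q_{i'j'}=\emptyset$. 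Thus the $Q_{ij}$ are pairwise strongly separated $G$-invariant sets. Since $X\times Y$ is completely normal it is $G$-completely normal by Lemma~\ref{GcomNor}, and every $G$-invariant open subspace inherits both properties; so by a straightforward induction on the number of pieces — at each step separating one $Q_{ij}$ from the union of the remaining ones, which is again strongly separated from it because closures commute with finite unions, and then recursing inside the $G$-open set covering the rest — one obtains pairwise disjoint $G$-invariant open sets $O_{ij}\supseteq Q_{ij}$, and after intersecting with $U_i\times V_j$ we may assume $O_{ij}\subseteq U_i\times V_j$. I expect this simultaneous equivariant separation to be the main obstacle: both the index bookkeeping behind the strong separation and the passage from the two-set case of $G$-complete normality to arbitrarily many pieces.

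Finally I would set $O_\ell=\bigcup_{i+j=\ell+1}O_{ij}$. Being a disjoint union of $G$-invariant open sets each equivariantly contractible to $(\alpha,\beta)$, $O_\ell$ is itself equivariantly contractible to $(\alpha,\beta)$ (assemble the individual $G$-homotopies, which agree trivially on the empty overlaps, exactly as in the observation after Lemma~\ref{G-contract}), hence $G$-categorical. Since $\bigcup_{\ell=1}^{n+m-1}O_\ell\supseteq\bigcup_{\ell=1}^{n+m-1}(C_\ell\setminus C_{\ell-1})=C_{n+m-1}=X\times Y$, the family $\{O_\ell\}_{\ell=1}^{n+m-1}$ is a $G$-categorical covering of $X\times Y$, and therefore $cat_G(X\times Y)\leq n+m-1=cat_G(X)+cat_G(Y)-1$.
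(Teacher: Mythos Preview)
Your proposal is correct and follows essentially the same route as the paper: both arguments choose $G$-cat sequences via Lemma~\ref{G-seq}, form the staircase sets $C_\ell$, observe that each layer $C_\ell\setminus C_{\ell-1}$ decomposes into pairwise strongly separated $G$-invariant pieces sitting inside products $U_i\times V_j$, invoke Lemma~\ref{GcomNor} to separate them, and use Lemma~\ref{G-contract} (together with the nonempty fixed-point hypothesis) to contract each piece, hence their disjoint union, to $(\alpha,\beta)$. The only cosmetic difference is that the paper packages the conclusion as a $G$-cat sequence of length $n+m-1$ and appeals again to Lemma~\ref{G-seq}, whereas you exhibit the covering $\{O_\ell\}$ directly; you are also slightly more explicit about the inductive separation of more than two pieces.
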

\begin{proof}
The idea of proof is similar to the proof for classical cat, \cite[Theorem 1.37]{CLOT}.
Let $\alpha \in X^G$, $\beta \in Y^G$, $cat_G(X) =n$, and $cat_G(Y) = m$. So by Lemma \ref{G-seq} there exist $G$-cat sequences of length $n$ and $m$: 
$$
\begin{array}{cc}
\emptyset = A_0 \subset A_1 \subset \cdots \subset A_n = X \; ,\\
\emptyset = B_0 \subset B_1 \subset \cdots \subset B_m = Y.
\end{array}
$$
Denote the $G$-categorical subsets containing the differences by
$$ A_i - A_{i-1} \subset U_i \quad \text{and} \quad B_j - B_{j-1} \subset W_j.$$
Define subsets of $X \times Y$ by
$$ C_0 = \emptyset \; , \quad C_1 = A_1 \times B_1 \; , \quad C_k = \bigcup_{i=1}^k A_i \times B_{k+1-i} \; , \quad C_{n+m-1} = A_n \times B_m = X \times Y, $$
where $A_i = \emptyset$ if $i>n$, and $B_j = \emptyset$ if $j>m$. Note that $C_k$ is $G$-invariant and
$$ C_k - C_{k-1} = \bigcup_{t=1}^{k} (A_t - A_{t-1}) \times (B_{k+1-t} - B_{k-t}).$$
Also for any $k$ and $t$,
$$ (A_t - A_{t-1}) \times (B_{k+1-t} - B_{k-t}) \subset U_t \times W_{k+1-t} ,$$
where $U_t \times W_{k+1-t}$ is a $G$-categorical subset of $X \times Y$ contracting to $(\alpha,\beta)$. Although for a fixed $k$ and varying $t$ there may be intersections among these sets, but this issue can be resolved by using the assumption that $X\times Y$ is $G$-completely normal. Denote 
$$ \Sigma_i = (A_i - A_{i-1}) \times (B_{k+1-i} - B_{k-i}).$$
Since for $i \neq j$ we have 
$$ \overline{\Sigma_i} \; \cap \; \Sigma_j = \emptyset \quad \text{and} \quad \Sigma_i \; \cap \; \overline{\Sigma_j} = \emptyset ,$$
and $X \times Y$ is $G$-completely normal, there exist disjoint $G$-invariant neighborhoods about  $\Sigma_i$ and $\Sigma_j$. By taking the intersection of those disjoint neighborhoods with $U_i \times W_{k+1-i}$ and  $U_j \times W_{k+1-j}$, we obtain disjoint $G$-categorical neighborhoods of $\Sigma_i$ and $\Sigma_j$, for $i\neq j$. So each $C_k - C_{k-1}$ sits inside a $G$-categorical subset of $X \times Y$, and therefore 
$$ \emptyset = C_0 \subset C_1 \subset \cdots \subset C_{m+n-1} = X \times Y $$
is a $G$-cat sequence for $X \times Y$. Thus
$$ cat_G(X \times Y) \leq n+m-1. $$
\end{proof}
We remark that in \cite{CG} the authors have a similar statement (Theorem 3.15), however there the assumption on fixed point set is not enough and leads to counterexamples (See Example \ref{e4}).

\section{Locally Standard Torus Manifolds}\label{sec2}

Following \cite{Da} we recall the definition of nice manifold with corners.
An $n$-dimensional manifold with corners is a Hausdorff second-countable topological space together with a maximal atlas of local charts onto open subsets of $\RR^n_{\geq 0}$ such that the overlap maps are homeomorphisms which preserve codimension function. Codimension function $c$ at the point 
$$ x=(x_1, \cdots , x_n) \in \RR^n_{\geq 0} ,$$
is the number of $x_i$ which are zero.
That means the codimension function is a well defined map from manifold with corners $P$ to non-negative integers. 
A connected component of $c^{-1}(m)$ is called an open face of $P$. The closure of an open face is called
a face. Note that we can talk about the dimension of faces of $P$. For example the dimension of $c^{-1}(m)$ is $n-m$.
A $0$-dimensional face is called a vertex and a codimension one face is called a facet of $P$. 

The manifold with corners $P$ is called $nice$ if for every $p \in P$ with $c(p)=2$, the number of codimension one face of $P$ which contains $p$ is also 2.
Therefore a codimension-$k$ face of the nice manifold with corners $P$ is a connected component of the intersection of unique collection
of $k$ many codimension one faces of $P$. An example of manifold with corner which is not nice can be found in Section 6 of \cite{Da}.
The boundary of an $n$-dimensional manifold with corners is the correspondent set of points in local charts for which the codimension function is at least one.

An $n$-dimensional $simple$ polytope in $\RR^n$ is a convex polytope where exactly $n$ bounding hyperplanes meet at 
each vertex. It is easy to see that simple polytope is a nice manifold with corners. For notational purposes, we consider 
a nice manifold with corners as a polytope if it is homeomorphic to a simple polytope and the codimension function is preserved.
We denote the set of vertices of a nice manifold with corners $P$ by $V(P)$ and the set of facets of $P$ by $\mathcal{F}(P)$.

\begin{defn}
A smooth action of $\TT^n$ on a $2n$-dimensional smooth manifold $M$ is said to be locally
standard if every point $y \in M $ has a $\TT^n$-invariant open neighborhood
$U_y$ and a diffeomorphism $\psi : U_y \to V$, where $V$ is a $\TT^n$-invariant
open subset of $\CC^n$, and an isomorphism $\delta_y : \TT^n \to \TT^n$ such that
\ $\psi (t\cdot x) = \delta_y (t) \cdot \psi(x)$ for all $(t,x) \in \TT^n \times U_y$. 
\end{defn}
Modifying the definition of quasitoric manifold and torus manifold in \cite{BP} and \cite{HM}, we
consider the following. More general torus actions are discussed in \cite{Yo} by Yoshida.
\begin{defn}\label{qtd02}
A closed, connected, oriented, and smooth $2n$-dimensional $\TT^n$-manifold $M$ is called a locally standard torus manifold over a nice manifold with corners $P$ if
the following conditions are satisfied:
\begin{enumerate}
\item The $\TT^n$-action is locally standard.
\item $ \partial P \neq \emptyset$, where $\partial{P}$ is the boundary of $P$.
\item There is a projection map $\mathfrak{q}: M \to P$ constant on orbits which maps every
$l$-dimensional orbit to a point in the interior of an $l$-dimensional face of $P$.
\end{enumerate}
In the case that $P$ is a polytope, $M$ is called a quasitoric manifold.
\end{defn}

Note that according to the Definition \ref{qtd02}, $P$ is the orbit space and is path-connected. 
Also we remark that for the definition of torus manifolds in \cite{HM}, the authors assume that the torus action has fixed points.
But here we do not have such restrictions.
\begin{example}\label{33}
Consider the natural $\TT^n$-action on
$$ \zS^{2n}=\{(z_1, \ldots, z_n, x) \in \CC^n \times \RR : |z_1|^2+ \cdots + |z_n|^2+x^2 =1\},$$
which is defined by 
$$(t_1, \ldots, t_n) \cdot (z_1, \ldots, z_n, x) \mapsto (t_1z_1, \ldots, t_nz_n, x).$$ 
The orbit space is given by $Q=\{(x_1, \ldots, x_n, x) \in \zS^n : x_1, \ldots, x_n \geq 0\}$ and
the number of fixed points is $2$.

This action is a locally standard action, so $S^{2n}$ is a locally standard torus manifold.
 Note that $S^{2n}$ is not a quasitoric manifold if $n \geq 2$.
When $n=2$ the orbit space is an eye shape. 
\end{example}

\begin{example}\label{consum}
Let $M_1$ and $M_2$ be two quasitoric manifolds of dimension $2n$, and $\TT^k$ be the $k$-dimensional torus,
$0\leq k \leq n$.
Let $\phi_i : \TT^k \to M_i$ be the embedding onto $k$-dimensional orbit of $M_i$, and let $\tau_i$ be the invariant tubular neighborhood of $\phi_i(\TT^k)$ for $i=1, 2$. Identifying the boundary of $\tau_1$ in $M_1$ and $\tau_2 $ in $M_2$ via an equivariant diffeomorphism, we get a smooth $\TT^n$-manifold, which is called an equivariant connected sum of $M_1$ and $M_2$, denoted $M_1 \#_{\TT^k} M_2$.
Clearly $M_1 \#_{\TT^k} M_2$ is a torus manifold, and it is not a quasitoric manifold if $k \geq 1$. Note that the above construction depends on the isomorphism type of the isotropy representations and on the gluing map. Here we are assuming that the isotropy representations are the same and the gluing map is the natural one.

A more general equivariant connected sum of smooth manifolds with torus action is described in \cite{GK}.
Equivariant connected sum of quasitoric manifolds at a fixed point and along a principal orbit is
discussed in \cite{BR} and \cite{PS} respectively.
\end{example}

\begin{defn} \label{charfun}
A function $\lambda : \mathcal{F}(P) \to \ZZ^n$ is called characteristic function if the submodule generated by
$\{\lambda(F_{j_1}), \ldots, \lambda(F_{j_l})\}$ is an $l$-dimensional direct summand of $\ZZ^n$ whenever
the intersection of the facets $F_{j_1}, \ldots, F_{j_l}$ is nonempty.

The vectors $\lambda_j = \lambda(F_{j})$ are called characteristic vectors and the pair $(P, \lambda)$ is called a characteristic pair.
\end{defn}
In \cite{MP} the authors show that given a torus manifold with locally standard action one can associate a
characteristic pair to it up to the choice of sign of characteristic vectors. They also constructed a
torus manifold with locally standard action from the pair $(P, \lambda)$. Following \cite{MP} we write
the construction briefly. A more general construction is done in \cite{Yo}.

Let $P$ be a nice manifold with corners and $(P, \lambda)$ be a characteristic pair.
A codimension-$k$ face $F$ of $P$ is a connected component of the intersection $F_{j_1} \cap \ldots \cap F_{j_k}$
of unique collection of $k$ facets $F_{j_1}, \ldots, F_{j_k}$ of $P$.
Let $\ZZ(F)$ be the submodule of $\ZZ^n$ generated by the characteristic vectors $\lambda_{j_1}, \ldots, \lambda_{j_k}$.
Then $\ZZ(F)$ is a direct summand of $\ZZ^n$. Therefore the torus $\TT_F : = (\ZZ(F) \otimes_{\ZZ} \RR)/\ZZ(F)$
is a direct summand of $\TT^n$. Define $\ZZ(P)=(0)$ and $\TT_P$ to be the proper trivial subgroup of $\TT^n$.
If $p \in P$, then $p$ belongs to the relative interior of a unique face $F(p)$ of $P$.

Define an equivalence relation $\sim$ on the product $\TT^n \times P$ by
\begin{equation}
(t,p) \sim (s,q) \quad \Longleftrightarrow \quad ~p=q~\mbox{and} ~ s^{-1}t \in \TT_{F(p)}.  \label{equ001} 
\end{equation}
Let $$M(P, \lambda) = (\TT^n \times P) / \sim $$ be the quotient space.
The group operation in $\TT^n$ induces a natural $\TT^n$-action on $M(P, \lambda)$.
The projection onto the second factor of $\TT^n \times P$ descends to the quotient map
\begin{equation}
\mathfrak{q} : M(P, \lambda) \to P ,\quad \mathfrak{q}([t,p]) = p \label{equ002}  
\end{equation}
where $[t, p]$ is the equivalence class of $(t, p)$. So the orbit space of this action is $P$.
One can show that the space $M(P, \lambda)$ has the structure of a locally standard torus manifold.
\begin{defn}
 Two $\TT^n$-actions on $2n$-dimensional torus manifolds $M_1$ and $M_2$ are called equivalent
if there is a homeomorphism $ f : M_1 \to M_2$ such that 
$$ f(t \cdot x) = t \cdot f(x), \quad \forall \; (t, x ) \in \TT^n \times M_1.$$
\end{defn}
\begin{defn}\label{defdel}
Let $\delta : \TT^n \to \TT^n$ be an automorphism. Two torus manifolds $M_1$ and $M_2$ over the same
manifold with corners $P$ are called $\delta$-equivariantly homeomorphic if there is a homeomorphism $ f : M_1 \to M_2$
such that 
$$f(t \cdot x) = \delta(t)\cdot f(x) , \quad \forall \; (t, x ) \in \TT^n \times M_1.$$

When $\delta$ is the identity automorphism, $f$ is called an equivariant homeomorphism.
\end{defn}


\begin{prop}\label{clema2}
Let $M$ be a $2n$-dimensional locally standard torus manifold over $P$, and
$\lambda:\mathcal{F}(P) \to \ZZ^n$ be its associated characteristic function. Let
$M(P, \lambda)$ be the locally standard torus manifold constructed
from the pair $(P, \lambda)$, and $H^2(P, \ZZ)=0$. Then there is an equivariant homeomorphism
$f: M(P, \lambda) \to M$ covering the identity on $P$.
\end{prop}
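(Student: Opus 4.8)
The plan is to construct the required homeomorphism first over the free part of the $\TT^n$-action and then extend it over all of the orbit space. Write $\mathfrak{q}' : M(P,\lambda)\to P$ for the structure projection of the model space, so that the goal is a $\TT^n$-equivariant homeomorphism $f$ with $\mathfrak{q}\circ f=\mathfrak{q}'$. Let $P^0$ be the relative interior of $P$, i.e.\ its unique top-dimensional open face. Over $P^0$ the action on $M$ is free (the orbit over a point of an $l$-dimensional face is $l$-dimensional, so over $P^0$ it is a full $\TT^n$), hence $\mathfrak{q} : \mathfrak{q}^{-1}(P^0)\to P^0$ is a principal $\TT^n$-bundle; the same holds for $M(P,\lambda)$, where moreover $(\mathfrak{q}')^{-1}(P^0)=\TT^n\times P^0$ is trivial with tautological section $p\mapsto[1,p]$. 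Principal $\TT^n$-bundles over a space $B$ are classified by $H^2(B;\ZZ^n)\cong H^2(B;\ZZ)^{\oplus n}$, and since $P$ is a (compact) nice manifold with corners the inclusion $P^0\hookrightarrow P$ is a homotopy equivalence (push off a collar of $\partial P$), so $H^2(P^0;\ZZ)\cong H^2(P;\ZZ)=0$. Thus $\mathfrak{q}^{-1}(P^0)\to P^0$ is trivial; I would fix a section $\sigma : P^0\to M$ and set $f_0([t,p])=t\cdot\sigma(p)$, a $\TT^n$-equivariant homeomorphism over $P^0$.

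It then remains to arrange that $f_0$ extends to a continuous map on all of $M(P,\lambda)$. Since $(\mathfrak{q}')^{-1}(P^0)$ is dense and $M$ is Hausdorff, such an extension is automatically unique, $\TT^n$-equivariant, covers $\mathrm{id}_P$, and, being a continuous equivariant bijection between compact Hausdorff spaces, is a homeomorphism. To build the extension I would use local standardness together with the characteristic data. For a face $F$ of $P$ of codimension $k$, choose $p_F$ in the relative interior of $F$ and a small corner chart $U_F\cong(U_F\cap F)\times\RR^k_{\ge 0}$ around it; the characteristic vectors at $F$ form a basis of a $k$-dimensional direct summand of $\ZZ^n$, which forces a $\TT^n$-equivariant homeomorphism $g_F : \mathfrak{q}^{-1}(U_F)\to(\mathfrak{q}')^{-1}(U_F)$ over $U_F$. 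In other words $M$ and $M(P,\lambda)$ already agree locally near every stratum. Over $U_F\cap P^0$ the composite $g_F\circ f_0$ is an automorphism of the trivial principal bundle, hence is described by a map $\varphi_F : U_F\cap P^0\to\TT^n$, and $f_0$ extends across $U_F$ exactly when $\varphi_F$ extends continuously over $U_F$. One makes this happen for all faces at once by a finite induction on $\mathrm{codim}\,F$: the family $\{g_F\}$ together with $f_0$ is a collection of partial equivariant identifications between $M$ and $M(P,\lambda)$ whose gluing cochain is $\TT^n$-valued and whose obstruction to global coherence lies in a subquotient of $H^2(P;\ZZ^n)=0$, so after redefining $\sigma$ by an automorphism of the free-part bundle all the $\varphi_F$ can be taken to extend; patching over the open cover consisting of $P^0$ and the $U_F$ then produces $f$. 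Conceptually this is the classification of locally standard $\TT^n$-manifolds over $P$ with a fixed characteristic function by $H^2(P;\ZZ)$, with $M(P,\lambda)$ the member corresponding to $0$; this circle of ideas is implicit in \cite{MP}.

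I expect the main obstacle to be this second step: controlling the $\TT^n$-valued discrepancy functions $\varphi_F$ near the boundary strata so that a single trivialization of the free-part bundle is simultaneously compatible with all of the standard local models. This is where the hypothesis $H^2(P;\ZZ)=0$ does genuine work beyond trivializing the principal bundle over $P^0$, and it requires care with the stratified structure of $P$ and with the inductive patching over faces. By contrast, the construction over $P^0$ and the concluding ``equivariant continuous bijection of compact Hausdorff spaces is a homeomorphism'' remark are routine.
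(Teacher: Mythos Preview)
The paper does not actually prove this proposition: immediately after the statement it records that ``this proposition is a particular case of Theorem~6.2 in \cite{Yo}'' and notes the earlier versions in \cite{DJ}, \cite{MP}, and \cite{OR}. So there is no proof in the paper to compare against; the authors defer entirely to the literature.

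Your sketch is the standard argument behind those references, and in outline it is correct. The key steps---(i) the free part $\mathfrak q^{-1}(P^0)\to P^0$ is a principal $\TT^n$-bundle, (ii) such bundles are classified by $H^2(P^0;\ZZ)^{\oplus n}\cong H^2(P;\ZZ)^{\oplus n}=0$ via the collar retraction $P\simeq P^0$, hence the bundle is trivial and a section exists, and (iii) local standardness plus the characteristic data force the local models over each corner chart to agree with $M(P,\lambda)$---are exactly what Masuda--Panov and Yoshida do. Your remark that the resulting continuous equivariant bijection between compact Hausdorff spaces is automatically a homeomorphism is fine here since $M$ is closed and $P$ (hence $M(P,\lambda)=(\TT^n\times P)/\!\sim$) is compact.

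You correctly flag the delicate point: making the global section over $P^0$ simultaneously compatible with all the local standard charts near $\partial P$. In your write-up this is handled only schematically (``the obstruction lies in a subquotient of $H^2(P;\ZZ^n)=0$''), and turning that into an honest inductive argument over the face poset---extending the section across codimension-one faces, then codimension-two, etc., each time using that the relevant relative $H^2$ vanishes---is precisely what the cited papers supply. If you want a self-contained proof rather than a citation, that induction needs to be written out; otherwise your sketch together with the reference to \cite{MP} or \cite{Yo} is adequate and matches what the paper itself does.
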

This proposition is a particular case of Theorem 6.2 in \cite{Yo}.  
We remark that this result is proved for quasitoric manifolds in \cite{DJ}, for torus manifolds
with locally standard action in \cite{MP}, and for specific $4$-dimensional manifolds with effective
$\TT^2$-action in \cite{OR}.

\begin{lemma}\label{simcq}
Let $M_1$ and $M_2$ be $2n$-dimensional quasitoric manifolds, then $M_1 \#_{\TT^k} M_2$ is simply connected for all $n$ and $k$ except $k=n= 2$.
\end{lemma}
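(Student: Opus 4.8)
The plan is to use the van Kampen theorem on the decomposition $M_1 \#_{\TT^k} M_2 = (M_1 \setminus \tau_1^\circ) \cup (M_2 \setminus \tau_2^\circ)$, where the pieces are glued along the common boundary $\partial\tau_1 \cong \partial\tau_2$. The first ingredient is that every quasitoric manifold $M_i$ is itself simply connected; this is classical (Davis--Januszkiewicz \cite{DJ}), since $M_i$ fibers over the contractible simple polytope $P_i$ with the fundamental group killed by the characteristic data. Next I would check that removing an invariant tubular neighborhood $\tau_i$ of the embedded orbit $\phi_i(\TT^k)$ does not change $\pi_1$: the orbit $\phi_i(\TT^k)$ has codimension $2n-k \ge n \ge 2$ in $M_i$ (indeed codimension at least $2$ whenever $k < 2n$, which holds here), so by general position any loop and any disk bounding it can be pushed off $\tau_i$, giving $\pi_1(M_i \setminus \tau_i^\circ) \cong \pi_1(M_i) = 1$. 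Thus both van Kampen pieces are simply connected.

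It then remains to understand the intersection $\partial\tau_1 \cong \partial\tau_2$, which van Kampen requires to be path-connected. The tubular neighborhood $\tau_i$ of $\phi_i(\TT^k)$ is an equivariant disk bundle; its boundary is an $S^{2n-k-1}$-bundle over $\TT^k$ (here one uses that the isotropy representation normal to a principal orbit, or to the chosen $k$-dimensional orbit, is as described in Example \ref{consum}, so the normal sphere bundle has fibre $S^{2n-k-1}$). Since $2n-k-1 \ge n-1 \ge 0$, this sphere is nonempty, hence $\partial\tau_i$ is path-connected (it surjects onto the connected base $\TT^k$ with connected fibres, except one must be slightly careful when $2n-k-1 = 0$, i.e. $n=1$; in that degenerate case $n=k=1$ the manifolds are $2$-dimensional and the statement can be checked directly). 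With all three van Kampen pieces path-connected, the theorem gives
$$\pi_1(M_1 \#_{\TT^k} M_2) \cong \pi_1(M_1\setminus\tau_1^\circ) *_{\pi_1(\partial\tau_1)} \pi_1(M_2\setminus\tau_2^\circ) = 1 *_{\pi_1(\partial\tau_1)} 1 = 1.$$

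The main obstacle — and the reason for the exceptional case $k=n=2$ — is precisely the connectivity and the dimension count for the gluing locus. When $k=n=2$ we have $2n-k-1 = 1$, so $\partial\tau_i$ is a $T^2$-bundle of circles over $T^2$, a $3$-manifold with potentially large fundamental group, and more importantly the codimension of $\phi_i(\TT^2)$ in the $4$-manifold $M_i$ is $2$, which is the borderline case where removing the tube can introduce a new generator of $\pi_1$ (a meridian) that need not die. So the genuine work is: (i) confirming the normal bundle structure of $\tau_i$ from the local standardness in Example \ref{consum}, and (ii) verifying that outside the excluded range the meridian $S^{2n-k-1}$ is simply connected (equivalently $2n-k-1 \ge 2$, i.e. $k \le 2n-3$) OR the ambient codimension argument otherwise kills the new generator — and tracking exactly where this forces $k \neq n$ or $n \neq 2$. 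I would organize the argument as a short case analysis on $2n-k-1$ being $\ge 2$, equal to $1$, or equal to $0$, handling the first case by the clean van Kampen argument above and the last two by hand, which is where the stated exception emerges.
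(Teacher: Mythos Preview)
Your overall plan is correct and yields the lemma, but the route differs from the paper's. The paper does not use transversality: instead it proves directly that each piece $M_i \setminus \tau_i$ is simply connected by exploiting the quotient description $M_i \setminus \tau_i \cong (\TT^n \times Q_i)/{\sim}$ coming from Proposition~\ref{clema2}, where $Q_i = P_i \setminus \mathfrak{q}_i(\tau_i) \simeq P_i \setminus \{*\}$. A result from \cite{CGM} on quotient maps with connected fibers gives a surjection $\pi_1(\TT^n \times Q_i) \twoheadrightarrow \pi_1(M_i \setminus \tau_i)$; since $Q_i$ is simply connected (this is precisely what fails when $n=k=2$, as then $P_i \setminus \{\text{interior point}\} \simeq S^1$) and the $\TT^n$-generators die at any fixed point of $M_i\setminus\tau_i$, the piece is simply connected. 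Van Kampen then finishes, exactly as in your argument.

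Your codimension approach is more elementary in that it bypasses Proposition~\ref{clema2} and the external $\pi_1$-surjectivity lemma, relying only on the simple connectivity of $M_i$ itself. One imprecision to fix: in your opening paragraph you assert that codimension $\ge 2$ lets you push both loops \emph{and bounding disks} off $\phi_i(\TT^k)$, but pushing disks requires codimension $\ge 3$. You effectively catch this later (your condition $2n-k-1 \ge 2$ on the meridian sphere is exactly codimension $\ge 3$), and for $0 \le k \le n$ the only failures of that inequality are $(n,k) \in \{(1,0),(1,1),(2,2)\}$, the first two of which are trivial by hand. So your case split is exactly right; just state the correct codimension threshold from the outset rather than introducing it as a late correction.
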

\begin{proof}
We adhere the notations of Example \ref{consum}. When $k=n=1$, then $M_1 \#_{\TT^k} M_2 = \zS^2 \sqcup \zS^2$.
So it is simply connected. Now consider the other cases of $n$ and $k$ except $n=k=1, 2$.
 Let $\mathfrak{q}_i: M_i \to P_i$ be the orbit map, and 
$ Q_i = P_i - \mathfrak{q}_i(\tau_i) \simeq P_i -\{\ast\}$ where $\ast \in P_i$ for $i=1,2 $.
Then $Q_i$ is simply connected and $M_i - \tau_i = \mathfrak{q}_i^{-1}(Q_i) $. By Proposition \ref{clema2} we have
$$  M_i - \tau_i \cong \big( \TT^n \times Q_i \big)/_{\sim} $$
where $\sim$ is defined in \eqref{equ001}.

Let $g_i : \TT^n \times Q_i \to M_i - \tau_i $ be the quotient map, for $i=1,2$. By definition of the equivalence relation $\sim$, $g_i^{-1}(x)$ is connected for all $x \in M_i - \tau_i$. Also $ \TT^n \times Q_i $ is locally path-connected and $M_i - \tau_i$ is semi-locally simply connected.
Thus by Theorem 1.1 in \cite{CGM}, we get a surjective map $$\pi_1(g_i) : \pi_1(\TT^n \times Q_i) \twoheadrightarrow \pi_1(M_i -\tau_i).$$ Since $Q_i$ is simply connected, 
$$ \pi_1 (\TT^n \times Q_i) = \pi_1(\TT^n).$$
Existence of fixed point in $M_i - \tau_i$ implies that all generator of $\pi_1(\TT^n)$ maps to zero. So $\pi_1(M_i - \tau_i)$ is trivial. Hence $\pi_1(M_1 \#_{\TT^k} M_2)$ is trivial by Van-Kampen theorem.

\end{proof}
More generally we have,
\begin{theorem}\label{bs}
Let $M$ be a locally standard torus manifold with orbit space $P$. If $M$ has a fixed point
and $P$ is simply connected, then $M$ is simply connected.
\end{theorem}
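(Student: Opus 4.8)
The plan is to prove that $\pi_1(M)$ is generated by the classes of the $n$ coordinate-circle orbits of a single principal orbit, and then to realise that principal orbit inside the standard chart about the given fixed point, where those circles visibly bound discs. So first put $\mathring P=P\setminus\partial P$ and $M_0=\mathfrak q^{-1}(\mathring P)$. As $P$ is a compact manifold with corners, a boundary collar gives $\pi_1(\mathring P)\cong\pi_1(P)=1$. By condition (3) of Definition \ref{qtd02} together with local standardness, the $\TT^n$-action on $M_0$ is free (in the local model $\CC^n$ the points lying over $c^{-1}(0)$ are exactly those with every coordinate nonzero), so $\mathfrak q\colon M_0\to\mathring P$ is a principal $\TT^n$-bundle; in particular $M_0$ is connected. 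On the other hand $M\setminus M_0=\mathfrak q^{-1}(\partial P)$ is a finite union of smooth submanifolds of $M$ of real codimension $\ge 2$ (in the $\CC^n$-charts they are intersections of complex coordinate hyperplanes), so a general-position argument shows that $M_0\hookrightarrow M$ is surjective on $\pi_1$. Feeding $\pi_1(\mathring P)=1$ into the homotopy exact sequence of $\TT^n\hookrightarrow M_0\to\mathring P$, the inclusion of any fibre $\mathfrak q^{-1}(p)\cong\TT^n$, $p\in\mathring P$, is $\pi_1$-surjective onto $\pi_1(M_0)$, hence onto $\pi_1(M)$; so $\pi_1(M)$ is generated, as a group, by the images of the $n$ coordinate-circle orbits through any principal point.

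Next, localise at the fixed point. A fixed point $x_0$ is a $0$-dimensional orbit, hence $\mathfrak q(x_0)$ is a vertex of $P$. Take a $\TT^n$-invariant chart $\psi\colon W\to V\subseteq\CC^n$ with $\psi(x_0)=0$ and $\psi(t\cdot x)=\delta(t)\cdot\psi(x)$, where $\delta\in\Aut(\TT^n)$ and $\TT^n$ acts coordinatewise on $\CC^n$; after shrinking we may take $V$ to be a round ball about $0$. Choose $x_1\in W$ with $\psi(x_1)=(a_1,\dots,a_n)$ and all $a_j\neq 0$; then $\mathfrak q(x_1)\in\mathring P$, so by the previous paragraph $\pi_1(M)$ is generated by the classes of the orbits $\{t\cdot x_1:t\in\delta^{-1}(S^1_k)\}$, $k=1,\dots,n$, of the circle subgroups $\delta^{-1}(S^1_k)$ (these classes generate $\pi_1$ of the orbit because $\delta$ is an automorphism). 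Under $\psi$ the $k$-th of them is the circle $\{(a_1,\dots,a_{k-1},e^{i\theta}a_k,a_{k+1},\dots,a_n):\theta\in\RR\}$, which bounds the disc $\{(a_1,\dots,a_{k-1},w,a_{k+1},\dots,a_n):|w|\le|a_k|\}\subseteq V$. Hence every generator of $\pi_1(M)$ is null-homotopic in $W\subseteq M$, and therefore $\pi_1(M)=1$.

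The step I expect to be the real work is the first one: encoding ``locally standard, with a fixed point, over a simply connected $P$'' into the single statement that one principal orbit carries all of $\pi_1(M)$. This needs both that the action over the interior $\mathring P$ is genuinely free (so that $M_0\to\mathring P$ is a bona fide principal bundle with simply connected base) and that $\mathfrak q^{-1}(\partial P)$ decomposes into honest submanifolds of codimension $\ge 2$ -- both of which come from local standardness -- and one must keep track of the fact that the circle subgroups generating $\pi_1$ of the principal orbit are the ones compatible with the coordinate structure of the chart at $x_0$, i.e.\ that the twisting automorphism $\delta$ does no harm. Once that is in place, the vanishing of the generators near the fixed point is immediate; note that no characteristic function and no assumption on $H^2(P;\ZZ)$ is used. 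If one did know $H^2(P;\ZZ)=0$, Proposition \ref{clema2} would instead let one replace $M$ by $(\TT^n\times P)/\!\sim$ and argue more directly, in the spirit of Lemma \ref{simcq}.
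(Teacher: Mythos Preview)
Your argument is correct and shares the paper's overall shape---prove that the map $\pi_1(\TT^n)\to\pi_1(M)$ induced by including a principal orbit is surjective, then observe that near a fixed point the torus generators bound discs---but the surjectivity step is organised differently. The paper appeals to Yoshida's classification (\cite{Yo}, Theorem~6.2) to write $M$ as a quotient $T_P/{\sim_l}$ of a principal $\TT^n$-bundle $T_P\to P$, applies the fibration sequence of $\TT^n\to T_P\to P$ to get $\pi_1(\TT^n)\twoheadrightarrow\pi_1(T_P)$, and then cites a theorem of Calcut--Gompf--McCarthy \cite{CGM} (quotient maps with connected fibres are $\pi_1$-surjective) to pass from $\pi_1(T_P)$ to $\pi_1(M)$. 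You bypass both black boxes: restricting to the free locus $M_0=\mathfrak q^{-1}(\mathring P)$ gives a principal bundle over a simply connected base directly, and the passage $\pi_1(M_0)\twoheadrightarrow\pi_1(M)$ comes from the codimension-$2$ transversality argument, which needs only local standardness. Your route is more self-contained and makes the role of the fixed point completely explicit (the paper compresses that last step into one sentence); the paper's route is shorter on the page but imports more external machinery.
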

\begin{proof}
Since $M$ is a smooth locally standard torus manifold with fixed point, the orbit space $P$
is a nice manifold with corners and $\partial P \neq \emptyset$ (see Section 4 in \cite{Yo}).

By result of Yoshida \cite{Yo}, $M$ is equivariantly homeomorphic to $T_P /\sim_l$,
where $T_P$ is a principal $\TT^n$-bundle over $P$ and $\sim_l$ is defined in Definition 4.9 in \cite{Yo}.
Since $P$ is simply connected, the fibration 
$$\TT^n \to T_P \to P $$
induces a surjective map $i_{\ast}: \pi_1(\TT^n) \to \pi_1(T_P)$. Let $f: T_P \to T_P/\sim_l$
be the quotient map. From Section 4 of \cite{Yo}, the fiber $f^{-1}(x)$ of each point $x \in T_P/\sim_l$
 is a connected subset of $\TT^n$. Hence by Theorem 1.1 in \cite{CGM},
 $$f_{\ast} : \pi_1(T_P) \to \pi_1(T_P/\sim_l)=\pi_1(M)$$ is surjective and therefore $f_{\ast} \circ i_{\ast}$
 is surjective. Since $\TT^n$-action has a fixed point, all generators of $\pi_1(\TT^n)$ maps to identity via $f_{\ast} \circ i_{\ast}$. Thus $\pi_1(M)$ is trivial.
\end{proof}

\section{LS-category of Locally Standard Torus Manifolds}\label{sec4}

The Lusternik-Schnirelmann category of a space $X$, denoted $cat(X)$, is the least integer $n$ such that there exists an open covering $U_1, \ldots , U_n$ of $X$ with each $U_i$ contractible to a point in the space $X$. If no such integer exists, we write $cat(X) = \infty$. 

In this section we discuss the LS-category of locally standard torus manifolds for the following cases:
\begin{itemize}
\item Quasitoric manifolds.
\item Locally standard torus manifold over $P$, where $P$ is simply connected and a connected component of $\partial P$ is a simple polytope.
\item $4$-dimensional locally standard torus manifold over $P$, where a connected component of $\partial P$ is a boundary of a polygon.
\end{itemize}
\begin{lemma}
 Let $M$ be a $2n$-dimensional quasitoric manifold over a simple polytope $P$. Then $cat(M) = n+1$.
\end{lemma}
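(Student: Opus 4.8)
The plan is to prove $cat(M)=n+1$ by establishing the two inequalities $cat(M)\geq n+1$ and $cat(M)\leq n+1$ separately. For the lower bound I would appeal to the cup-length estimate: with the unreduced convention used here (so that $cat(\mathrm{pt})=1$), one has $cat(X)\geq \mathrm{cl}(X)+1$ for any path-connected $X$, where $\mathrm{cl}(X)$ is the largest $k$ for which there exist classes $u_1,\dots,u_k$ of positive degree in $H^*(X)$ with $u_1\cdots u_k\neq 0$. By the theorem of Davis and Januszkiewicz \cite{DJ}, for a quasitoric manifold over a simple polytope the ring $H^*(M;\ZZ)$ is concentrated in even degrees and generated by its degree-$2$ part; since $M$ is a closed oriented $2n$-manifold, $H^{2n}(M;\ZZ)\cong\ZZ\neq 0$, so this top class must be a nonzero product of $n$ classes of degree $2$. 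Hence $\mathrm{cl}(M)\geq n$, giving $cat(M)\geq n+1$.

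For the upper bound I would first observe that $M$ is simply connected. Indeed, a simple polytope $P$ is convex, hence contractible and in particular simply connected, and $M$ has fixed points (the preimages under $\mathfrak{q}$ of the vertices of $P$, which are $0$-dimensional orbits); so Theorem \ref{bs} applies and $\pi_1(M)=0$. Since $M$ is a closed smooth manifold it has the homotopy type of a finite CW complex of dimension $2n$, so I can invoke the classical connectivity–dimension bound for LS-category (see \cite{CLOT}): a $c$-connected CW complex $X$ satisfies $cat(X)\leq \frac{\dim X}{c+1}+1$. Taking $c=1$ and $\dim M=2n$ yields $cat(M)\leq n+1$. Combining this with the lower bound gives $cat(M)=n+1$.

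The two displayed bounds are themselves standard, so the real content — and the step needing care — is the cohomological input for the lower bound: that $H^*(M)$ is generated in degree $2$, so that the non-vanishing of $H^{2n}(M)$ forces a nonzero product of $n$ positive-degree classes. This is exactly where the hypothesis that $P$ is a polytope enters, via \cite{DJ}; for a locally standard torus manifold over a general nice manifold with corners this need not hold, which is why the remaining cases in this section are handled later by explicit geometric constructions of categorical coverings rather than by the cup-length argument.
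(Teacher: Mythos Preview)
Your proof is correct and follows essentially the same strategy as the paper: a cup-length lower bound via the Davis--Januszkiewicz description of $H^*(M;\ZZ)$, and an upper bound from simple connectivity plus the dimension/connectivity estimate. The only cosmetic differences are in the citations: the paper invokes Corollary~3.9 of \cite{DJ} directly for $\pi_1(M)=0$ (rather than routing through Theorem~\ref{bs}) and cites Proposition~27.5 of \cite{FHT} for the upper bound (rather than \cite{CLOT}), but the underlying arguments are the same.
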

\begin{proof}
By Proposition 3.10 in \cite{DJ}, each generator of degree $2n$ in the integral cohomology group of $M$ is a product of $n$ cohomology classes of lowest dimension 2. Since $dim(M) = 2n$, cuplength of $M$ (see Definition 1.4 of \cite{CLOT}) is $n$,
$$ cup_{\ZZ} (M) = n.$$
Thus by Proposition 1.5 in \cite{CLOT},
$$cat(M) \geq n+1.$$
By Corollary 3.9 of \cite{DJ}, $M$ is simply connected. Therefore by Proposition 27.5 in \cite{FHT},
$$cat(M) \leq n+1.$$
\end{proof}
\begin{lemma}\label{lbc}
Let $M$ be a $2n$-dimensional locally standard torus manifold over $P$. If a connected component of $\partial P$ is a boundary of an $n$-dimensional simple polytope $Q$, then 
$$ cat(M) \geq n+1.$$
\end{lemma}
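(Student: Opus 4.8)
The plan is to establish the lower bound $cat(M) \geq n+1$ by producing a cohomology class in $H^*(M)$ of cup-length $n$, exactly as in the quasitoric case, but now only using the part of $M$ lying over a simple polytope sitting in $\partial P$. Since cuplength is a lower bound for category minus one (Proposition 1.5 of \cite{CLOT}), it suffices to show $cup(M) \geq n$ for some coefficient ring; I would work with $\ZZ/2$ coefficients to avoid orientation subtleties, or with $\ZZ$ if the relevant submanifold carries a compatible orientation.

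The key geometric input is this: if a connected component $Q$ of $\partial P$ is the boundary $\partial Q'$ of an $n$-dimensional simple polytope $Q'$, then $N := \mathfrak{q}^{-1}(Q') \subseteq M$ is a closed $2n$-dimensional submanifold of $M$ which is itself a quasitoric manifold over $Q'$. Indeed, the restriction of the characteristic function $\lambda$ of $M$ to the facets of $Q'$ (these are the facets of $P$ meeting $Q$, intersected with $Q'$) is a characteristic function on $Q'$, and by Proposition \ref{clema2} (applied to $Q'$, which is a simple polytope and hence has $H^2(Q';\ZZ)=0$) the preimage $\mathfrak{q}^{-1}(Q')$ is equivariantly homeomorphic to the canonical quasitoric manifold $M(Q',\lambda|_{Q'})$. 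First I would verify carefully that $\mathfrak{q}^{-1}(Q')$ is indeed a closed submanifold: the local standardness of the $\TT^n$-action identifies a neighborhood of any orbit with a standard one, and $Q'$ being a face-subcomplex of $P$ (a union of faces, being the closure of the region bounded by a boundary component) makes $\mathfrak{q}^{-1}(Q')$ locally modeled on $\TT^n$-invariant submanifolds of $\CC^n$. Then by the quasitoric case already proven, $cup(N) = n$, realized by a product $\alpha_1 \cdots \alpha_n$ of degree-$2$ classes whose product generates $H^{2n}(N)$.

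The remaining step is to push this cuplength back up to $M$. I would consider the restriction homomorphism $H^2(M) \to H^2(N)$ induced by the inclusion $\iota: N \hookrightarrow M$ and lift the classes $\alpha_i$ to classes $\tilde\alpha_i \in H^2(M)$; the obstruction is that $\iota^*$ need not be surjective in degree $2$. An alternative, cleaner route avoiding surjectivity is to use the Gysin/Thom argument: $N$ is a $2n$-submanifold of the $2n$-manifold $M$, so it is an open-and-closed union of components, i.e. $N$ is itself a connected component of $M$ (since $M$ is connected, $N = M$) — wait, this forces $N=M$, which would only happen when $Q'=P$. So in general $\dim N < \dim M$ is false here since both are $2n$-dimensional; hence the correct conclusion is that $\mathfrak{q}^{-1}(Q')$ has nonempty interior and we must instead argue that $Q'$ can be taken with $Q'=P$ up to the boundary component, OR restrict to a genuine submanifold of lower dimension. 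The honest fix: take $N = \mathfrak{q}^{-1}(Q)$ where $Q = \partial Q'$ — no. Let me state the intended argument: one shows $H^{2n}(M;\ZZ/2) \to H^{2n}(N;\ZZ/2)$ is surjective (both are $2n$-manifolds and $N$ meets the top cohomology of $M$ nontrivially via the fundamental class pairing), and that the degree-$2$ generators of $H^*(N)$ extend, using that $M$ retracts onto a neighborhood of $N$ or that the orbit-space inclusion $Q' \hookrightarrow P$ admits a deformation retraction when $Q'$ is a regular neighborhood of the boundary component $Q$.

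The hard part, and the place where I expect the real work to be, is precisely this extension/surjectivity step: controlling $\iota^*: H^*(M;\ZZ/2) \to H^*(N;\ZZ/2)$ well enough to conclude $cup(M) \geq cup(N) = n$. I would handle it by choosing $Q'$ to be a closed regular neighborhood (in $P$) of the boundary component $Q$ that is combinatorially a simple polytope, so that $P$ deformation retracts onto $Q'$ rel the face structure; this makes $\mathfrak{q}^{-1}(Q') \hookrightarrow M$ a deformation retract equivariantly, hence $\iota^*$ is an isomorphism on cohomology and the cuplength transfers immediately. Verifying that such a regular neighborhood exists and is combinatorially simple — i.e. that one can thicken a simple-polytopal boundary component to a simple-polytopal codimension-zero submanifold-with-corners of $P$ respecting $\lambda$ — is the main technical obstacle, and it is exactly what the hypothesis "a connected component of $\partial P$ is the boundary of an $n$-dimensional simple polytope $Q$" is designed to provide.
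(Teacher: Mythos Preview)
Your approach has a genuine gap at its very first geometric step. The hypothesis says only that a connected component $Q$ of $\partial P$ is \emph{combinatorially} the boundary of some simple $n$-polytope $Q'$; it does not say $Q' \subseteq P$, and in the paper's own examples (equivariant connected sums of quasitoric manifolds, Example~\ref{consum}) the orbit space $P$ is obtained from polytopes by deleting interiors of discs and gluing, so the ``inside'' of the polytopal boundary component simply is not present in $P$. Thus $\mathfrak{q}^{-1}(Q')$ is not defined as a subset of $M$, and your closed quasitoric submanifold $N$ does not exist. Your fallback --- replacing $Q'$ by a regular neighborhood of $Q$ in $P$ and claiming $P$ deformation retracts onto it --- also fails: a collar of $Q$ in $P$ is homeomorphic to $Q\times[0,1)$, which is not a simple polytope, and $P$ need not retract onto any neighborhood of a single boundary component (take $P$ with several boundary components, or with positive genus in the $n=2$ case).

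The paper avoids all of this by working directly in $M$ with Poincar\'e duals of characteristic submanifolds. Pick a vertex $v$ of $Q'$; then $v=F_{i_1}\cap\cdots\cap F_{i_n}$ for $n$ facets of $Q'$, and these $F_{i_j}$ \emph{are} facets of $P$ (they are the pieces of $\partial P$ making up $Q$). Their preimages $X_j=\mathfrak{q}^{-1}(F_{i_j})$ are codimension-$2$ closed submanifolds of $M$, and local standardness makes the intersection $X_1\cap\cdots\cap X_n=\{x_v\}$ transversal at the fixed point $x_v=\mathfrak{q}^{-1}(v)$. Hence the cup product of the Poincar\'e duals $[X_1]^\vee\cup\cdots\cup[X_n]^\vee\in H^{2n}(M;\ZZ)$ is dual to $[x_v]\neq 0$, giving $cup_\ZZ(M)\geq n$ and therefore $cat(M)\geq n+1$. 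No auxiliary quasitoric manifold, no restriction map, and no retraction are needed: the $n$ facets and the single transversal intersection point already live in $M$.
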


\begin{proof}

Let $v$ be a vertex of $Q$ and $v= F_{i_1} \cap \cdots \cap F_{i_n}$, where $F_{i_1}, \cdots , F_{i_n}$ are unique $n$-many facets of $Q$ (and therefore facets of $P$). Let $x_v = \mathfrak{q}^{-1} (v)$ and $X_j = \mathfrak{q}^{-1} (F_{i_j})$, for $j=1,2,\cdots,n$. Since $\TT^n$-action on $M$ is locally standard, $x_v$ is a fixed point and the intersection $X_1 \cap \cdots \cap X_n (=x_v)$ is transversal. Therefore the Poincar\'{e} dual of $X_j$ represents a non-zero cohomology class in $H^2(X, \ZZ)$ (see Section 0.4 in \cite{GH}). So by definition of cup-length, $ cup_{\ZZ} (M) \geq n$. 
\end{proof}
~\\
Note that Lemma \ref{lbc} is not true for every locally standard torus manifold,
see the  Example \ref{e3}.


\begin{theorem}\label{lsm}
Let $M$ be a $2n$-dimensional locally standard torus manifold with a simply connected orbit space $P$.
If a connected component of $\partial P$ is the boundary of a simple polytope $Q$, then $$ cat(M) = n+1 .$$
\end{theorem}
\begin{proof}
By Theorem \ref{bs} $M$ is simply connected, so $cat(M) \leq n+1$. On the other hand by Lemma \ref{lbc},
$cat(M) \geq n+1$.
 
\end{proof}
\begin{corollary}
Let $M_1$ and $M_2$ be quasitoric manifolds. Then for any $k$ and $n$ except $k=n=2$, we have
$$ cat(M_1 \#_{\TT^k} M_2 ) = n+1 .$$
\end{corollary}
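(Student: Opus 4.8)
The plan is to establish the two inequalities $cat(M)\le n+1$ and $cat(M)\ge n+1$ for $M:=M_1\#_{\TT^k}M_2$, following the pattern of Theorem \ref{lsm}. By Example \ref{consum}, $M$ is a $2n$-dimensional locally standard torus manifold; write $P$ for its orbit space and $\mathfrak{q}:M\to P$ for the quotient map. Recall that, passing to orbit spaces, the connected sum deletes from each $P_i$ a small neighbourhood $N_i$ of a point $p_i$ lying in the relative interior of a $k$-face $G_i$, and glues $P_1\setminus N_1$ to $P_2\setminus N_2$ along the newly exposed boundary pieces.

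For the upper bound, dispose first of the case $k=n=1$, where $M=\zS^2\sqcup\zS^2$ and $cat(M)=cat(\zS^2)=2=n+1$. In every remaining case (recall $k=n=2$ is excluded), $M$ is simply connected by Lemma \ref{simcq}, so Proposition 27.5 of \cite{FHT} gives $cat(M)\le \frac{1}{2}\dim M+1=n+1$.

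For the lower bound I would run the argument of Lemma \ref{lbc} inside $M$. Choose a vertex $v$ of $P_1$ not lying on the surgery face $G_1$: this is possible when $1\le k\le n-1$ because a proper face of a polytope misses some vertex, when $k=n$ because $p_1$ is then an interior point bounded away from every vertex, and when $k=0$ because a simple $n$-polytope has at least $n+1\ge 2$ vertices. For a sufficiently small invariant tubular neighbourhood $\tau_1$, the point $v$ remains a vertex of $P$, with $v=\hat F_1\cap\cdots\cap\hat F_n$ a simple intersection of the $n$ facets of $P$ through $v$. Set $x_v=\mathfrak{q}^{-1}(v)$ and $X_j=\mathfrak{q}^{-1}(\hat F_j)$; each $X_j$ is a closed codimension-two submanifold of $M$ (either a characteristic submanifold of $M_1$ untouched by the surgery, or the connected sum of the corresponding characteristic submanifolds of $M_1$ and $M_2$), the point $x_v$ is a fixed point, and $X_1\cap\cdots\cap X_n=\{x_v\}$ is a transverse intersection by local standardness of the action near $x_v$. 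Hence the cup product of the Poincar\'e duals $[X_j]\in H^2(M;\ZZ)$ is the Poincar\'e dual of $[x_v]$, a generator of $H^{2n}(M;\ZZ)$, so $cup_{\ZZ}(M)\ge n$ and therefore $cat(M)\ge n+1$ by Proposition 1.5 of \cite{CLOT}. Together with the upper bound this gives $cat(M_1\#_{\TT^k}M_2)=n+1$. When $k=n$ one may instead simply observe that $\partial P=\partial P_1\sqcup\partial P_2$, so Theorem \ref{lsm} applies verbatim with $Q=P_1$.

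The step I expect to be the main obstacle is the bookkeeping in the previous paragraph: checking that the characteristic submanifolds through an unaffected fixed point really do survive the connected-sum surgery as closed codimension-two submanifolds of $M$, meeting transversally in the single point $x_v$ with no spurious intersections, i.e. that the local cohomological picture of $M_1$ near $x_v$ is faithfully inherited by $M$. If one prefers to invoke Theorem \ref{lsm} directly rather than redo Lemma \ref{lbc}, the corresponding difficulty is to show both that $P$ is simply connected (Van Kampen applied to $P=(P_1\setminus N_1)\cup(P_2\setminus N_2)$, the obstruction being precisely the annulus that appears when $k=n=2$) and that a connected component of $\partial P$ is, as a nice manifold with corners, the boundary of a simple polytope, which for $k<n$ forces one to read off the polytopal corner structure of the single boundary sphere of $P$ from those of $P_1$ and $P_2$.
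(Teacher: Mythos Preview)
Your proposal is correct and takes essentially the same approach as the paper: the upper bound comes from simple connectivity via Lemma \ref{simcq} and \cite{FHT}, and the lower bound from the cup-product/Poincar\'e-duality argument underlying Lemma \ref{lbc}. The paper's own proof is terser --- it simply asserts that $\partial P$ contains the boundary of a simple polytope and then invokes Theorem \ref{lsm} --- so your explicit tracking of the characteristic submanifolds through a vertex untouched by the surgery is, if anything, more careful on exactly the point you flagged as the main obstacle.
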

\begin{proof}
Let $P$ be the orbit space of locally standard $\TT^n$-action on $ M_1 \#_{\TT^k} M_2 $.
Since $M_1$ and $M_2$ are quasitoric manifolds, $\partial P$ contains the boundary of a
simple polytope. Also by Lemma \ref{simcq}, $ M_1 \#_{\TT^k} M_2 $ is simply connected. Therefore by Theorem \ref{lsm}
$$ cat(M_1 \#_{\TT^k} M_2 ) = n+1 .$$
\end{proof}
\begin{lemma}\label{oc}
Let $M$ be a $4$-dimensional locally standard torus manifold with a fixed point $x_0$. Then any orbit is contractible to $x_0$.
\end{lemma}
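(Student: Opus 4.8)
The plan is to build the contracting homotopy $\oO(x)\times I\to M$ directly, by dragging the orbit across the orbit space $P$ toward $\mathfrak{q}(x_0)$ and modelling the picture near $x_0$ on $\CC^2$, which cones down to the origin. The one piece of honest geometric input is this: since the $\TT^2$-action is locally standard, $x_0$ has a $\TT^2$-invariant open neighborhood $U_0$ with a diffeomorphism onto a $\TT^2$-invariant open subset of $\CC^2$ sending $x_0$ to $0$ and the action to the standard action (after an automorphism of $\TT^2$ that plays no role); shrinking, we may take the model to be a ball about $0$. The radial homotopy $(z,s)\mapsto(1-s)z$ then contracts $U_0$ to $x_0$ in $M$, so any orbit lying in $U_0$ is already contractible to $x_0$. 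Since $\mathfrak{q}$ is an open map and $U_0$ is $\TT^2$-invariant, $W_0:=\mathfrak{q}(U_0)$ is an open neighborhood of $v_0:=\mathfrak{q}(x_0)$ in $P$ with $\mathfrak{q}^{-1}(W_0)=U_0$, and likewise for the standard chart about any point of $M$.

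For a general orbit $\oO(x)$, put $p=\mathfrak{q}(x)$. As $P$ is path-connected, choose a path $\gamma\colon I\to P$ from $p$ to $v_0$, cover the compact set $\gamma(I)$ by finitely many $\mathfrak{q}$-images $W_1,\dots,W_k$ of standard charts with $W_k=W_0$, and by a Lebesgue-number argument pick $0=s_0<\cdots<s_k=1$ with $\gamma([s_{j-1},s_j])\subseteq W_j$, so that $\mathfrak{q}^{-1}(\gamma([s_{j-1},s_j]))\subseteq U_j$ and $\mathfrak{q}^{-1}(\gamma(s_j))\subseteq U_j\cap U_{j+1}$. The crux is a local claim: in a standard chart, with orbit-space model an open subset of $\RR^2_{\geq 0}$, a path $\sigma$ there and a map $f$ from $\oO(x)$ into the orbit over $\sigma(0)$ can be joined, by sliding through the orbits over $\sigma$, to a map into the orbit over $\sigma(1)$. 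In polar coordinates this is just scaling the modulus of each coordinate along $\sigma$ with its argument frozen; the only subtlety is a coordinate whose modulus starts at $0$ (so $f$ vanishes there), where one borrows an argument from a non-vanishing coordinate of $f$ — available since a circle orbit of $\CC^2$ still has a non-vanishing coordinate and a point orbit needs none. Running this successively on $[s_0,s_1],\dots,[s_{k-1},s_k]$, each time starting from the map produced at the previous breakpoint (now read in the coordinates of $U_j$), and gluing by the pasting lemma yields $H\colon\oO(x)\times I\to M$ with $H_0$ the inclusion; on the last interval $\sigma$ ends at the corner $0$ of $U_0$, so every modulus goes to $0$ and $H_1\equiv x_0$.

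The main obstacle is confined to the local claim: keeping the homotopy continuous through changes of orbit type — a singular orbit leaving its stratum as $\sigma$ moves into the interior, or a principal torus collapsing onto the corner. The $\CC^2$-model reduces this to the argument-borrowing device; the remaining care is to check that at each breakpoint the transported map lands in a single orbit, which is what lets the chart transitions glue, and here the identities $\mathfrak{q}^{-1}(W_j)=U_j$ do the work by forcing $\oO(x)$ and every intermediate image to sit inside the relevant chart. One could alternatively observe that, $M$ being connected, the statement is equivalent to ``every orbit is null-homotopic in $M$'', settle circle orbits by the vanishing of their class in $\pi_1(M)$ (seen in the chart about $x_0$) and torus orbits by a free homotopy into that chart; but the explicit construction above is the one reused in the later sections.
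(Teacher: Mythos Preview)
Your argument is correct but takes a more laborious route than the paper's. The paper first invokes Proposition~\ref{clema2} to replace $M$ by the explicit quotient $M(P,\lambda)=(\TT^2\times P)/\!\sim$ (legitimate here because a $2$-manifold with corners and nonempty boundary has $H^2(P,\ZZ)=0$), and then chooses the path $\alpha$ from $\mathfrak{q}(\theta)$ to $\mathfrak{q}(x_0)$ to be injective with $\alpha((0,1))\subset P^0$. With those two moves the homotopy is essentially a one-liner: over the open interval the equivalence relation $\sim$ is trivial, so $(\TT^2\times\alpha(I))/\!\sim$ is a cylinder collapsing only at its two ends, and the splitting $\TT^2\cong\TT^2_x\oplus(\TT^2/\TT^2_x)$ handles those endpoint identifications. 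Your chart-by-chart construction instead allows $\gamma$ to wander through boundary strata of $P$, which is exactly why you need the argument-borrowing device in $\CC^2$ to survive mid-path changes of orbit type; this is sound, but it is precisely the complication the paper's choice $\alpha((0,1))\subset P^0$ sidesteps. What your version buys is independence from the global combinatorial model---you never need to identify $M$ with $M(P,\lambda)$---at the cost of the bookkeeping of chart transitions and a more delicate local analysis.
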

\begin{proof}
Let $P$ be the orbit space and $\mf{q} : M \to P$ be the orbit map. By Proposition \ref{clema2}, we may assume that $M = M(P, \lambda)$ where $\lambda$ is the characteristic function of $M$. Let $\theta$ be an orbit such that $\mf{q}(\theta)=x \in P$. We can choose a path $\alpha: [0,1] \to P$ from $x$ to $x_0$ such that $\alpha$ is injective and $\alpha(0,1) \cap P \subset P^0$ (interior of $P$). We denote the image of $\alpha$ by $[x, x_0]$. Then $$(\TT^2 \times [x, x_0])/\sim ~ \subset M. $$ Let $\TT^2_x$ be the isotropy group of $x$. Then $$\theta =\mf{q}^{-1}(x) = (\TT^2 \times x)/\sim \cong \TT^2/\TT^2_x.$$ Since the $\TT^2$-action is locally standard, we have $\TT^2 \cong \TT^2_x \oplus (\TT^2/\TT^2_x)$. Observe that $(\TT^2/\TT^2_x \times [x, x_0])/\sim$ gives a homotopy.   
\end{proof}

\begin{theorem}\label{catM3}
Let $M$ be a 4-dimensional locally standard torus manifold over $P$, such that a connected component of $\partial P$ is the boundary of a polygon. Then 
$$ cat(M) = 3.$$ 
\end{theorem}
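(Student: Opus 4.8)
The plan is to get $cat(M)\geq 3$ for free from Lemma~\ref{lbc}, and to prove $cat(M)\leq 3$ by pulling back, through the orbit map, the classical three–element categorical covering attached to a triangulation of the two–dimensional orbit space $P$. The lower bound is immediate: by hypothesis a connected component of $\partial P$ is the boundary of a polygon, i.e.\ of a $2$–dimensional simple polytope, so Lemma~\ref{lbc} gives $cat(M)\geq n+1=3$.

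For the upper bound I would first normalize the picture. Since $P$ is a compact nice manifold with corners of dimension $2$ with $\partial P\neq\emptyset$, it is homotopy equivalent to a graph, so $H^{2}(P;\ZZ)=0$; by Proposition~\ref{clema2} we may then assume $M=M(P,\lambda)=(\TT^{2}\times P)/{\sim}$ with $\mathfrak{q}([t,p])=p$. Because the polygon has vertices, $M$ has a fixed point, so Lemma~\ref{oc} applies: every orbit of $M$ is null-homotopic in $M$ (trivially so when the orbit is a point), and $M$ is path-connected. Next, fix a triangulation $K$ of $P$ subordinate to its face structure — every vertex of $P$ is a vertex of $K$, and every face of $P$ is a subcomplex of $K$ whose simplices meet it in its relative interior — let $K'$ be the barycentric subdivision, and for $j=0,1,2$ put $A_{j}=\bigcup\{\,C_{\sigma}: \sigma\in K,\ \dim\sigma=j\,\}$, where $C_{\sigma}=\mathrm{St}_{K'}(\hat\sigma)$ is the open star in $K'$ of the barycenter of $\sigma$. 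By the classical estimate $cat(|K|)\leq\dim|K|+1$ (cf.\ \cite{CLOT}) one has $A_{0}\cup A_{1}\cup A_{2}=P$, and each $A_{j}$ is a disjoint union of the contractible open sets $C_{\sigma}$. Setting $U_{j}=\mathfrak{q}^{-1}(A_{j})$ yields an open covering $M=U_{0}\cup U_{1}\cup U_{2}$, and the whole matter reduces to checking that each $U_{j}$ is categorical.

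The crux is an observation about the canonical linear deformation retraction $h_{s}\colon C_{\sigma}\to C_{\sigma}$ onto $\hat\sigma$: along it stabilizers only grow, i.e.\ $\TT_{F(p)}\subseteq\TT_{F(h_{s}(p))}$ for all $p\in C_{\sigma}$ and $s\in[0,1]$, where $F(x)$ is the face of $P$ whose relative interior contains $x$. Indeed $p$ lies in the relative interior of a single simplex of $K'$, coming from a flag of faces of $K$ that contains $\sigma$; for $s<1$ the point $h_{s}(p)$ stays in that same relative interior, so $F(h_{s}(p))=F(p)$, while $h_{1}(p)=\hat\sigma$ lies on the face $F(\hat\sigma)$, which is a face of $F(p)$ and therefore has larger stabilizer. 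Given this, $[t,p]\mapsto[t,h_{s}(p)]$ is well defined on $\mathfrak{q}^{-1}(C_{\sigma})=(\TT^{2}\times C_{\sigma})/{\sim}$ and deformation retracts it onto $\mathfrak{q}^{-1}(\hat\sigma)$, a single orbit (a fixed point, and $\mathfrak{q}^{-1}(C_{\sigma})$ contractible, when $\hat\sigma$ is a vertex of $P$). By Lemma~\ref{oc} that orbit is null-homotopic in $M$, hence so is $\mathfrak{q}^{-1}(C_{\sigma})$. Finally $U_{j}$ is a finite disjoint union of such sets, and a disjoint union of open sets each null-homotopic in the path-connected space $M$ is again null-homotopic in $M$: contract each piece to its point, then drag the points to a common basepoint along paths. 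Thus each $U_{j}$ is categorical, so $cat(M)\leq 3$, and with the lower bound $cat(M)=3$; the argument also produces the covering explicitly.

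The step I expect to be the main obstacle is precisely this stabilizer-monotonicity of the retractions $h_{s}$ and the ensuing homotopy identification of $\mathfrak{q}^{-1}(C_{\sigma})$ with an orbit. The analogous statement is false for an arbitrary contractible subset of $P$: a contractible arc meeting $\partial P$ in two facet points can have preimage a nontrivial $2$–sphere in $M$, which need not be null-homotopic. It is exactly to avoid this that one works with the barycentric subdivision of a subordinate triangulation, in which the open star of a barycenter is swept onto it through faces of $P$ whose stabilizers increase monotonically; this is the structural input that lets the three–element cover of $P$ lift to a categorical cover of $M$, and it is also what generalizes to higher dimensions when the orbit space is triangulable.
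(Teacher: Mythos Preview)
Your proof is correct and follows essentially the same route as the paper: both obtain the lower bound from Lemma~\ref{lbc} and the upper bound by pulling back through $\mathfrak{q}$ a three-set open covering of $P$ indexed by the dimension of simplices in a triangulation, then invoking Lemma~\ref{oc} to contract the resulting orbits. The paper phrases the covering via hand-picked neighborhoods $X_i,Y_j,Z_k$ of the vertices, edge-midpoints, and face-centers (its conditions (6)--(8) play the role of your stabilizer-monotonicity, by forcing each neighborhood to be codimension-preservingly homeomorphic to a standard chart), whereas you formalize the same picture with open stars of barycenters in the barycentric subdivision and make the monotonicity mechanism explicit; the two constructions are equivalent.
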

\begin{proof}
By Lemma \ref{lbc}, $cat(M) \geq 3$. Since the $\TT^2$-action on $M$ is locally standard, $P$ is a nice
2-dimensional manifold with corners. So every component of $\partial P$ is either boundary of a polygon, a circle, or an eye shape (see Figure \ref{eye}).
\begin{figure}[h]
\centering
\includegraphics[scale=0.6]{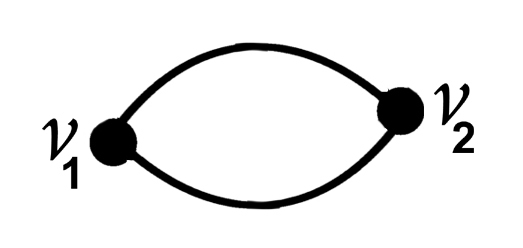}			
\caption{An eye shape}
\label{eye}
\end{figure}

Note that $P$ can be obtained from a closed surface by removing the interior points 
of a finite number of non-intersecting polygons, or polygons and eye shapes, or polygons
and circles, or polygons and eye shapes and circles. Thus by \cite{BCN}
$P$ has a triangulation $\Sigma$
such that the vertices of $P$ belong to the vertex set of $\Sigma$. Let 
\begin{itemize}
\item $\{x_1, \ldots, x_l\}$ be the vertices of $\Sigma$,
\item $\{E_1, \ldots, E_m\}$ be the edges of $\Sigma$, and 
\item $\{F_1, \ldots, F_n\}$ be the faces of $\Sigma$.
\end{itemize}
Suppose $y_j$ and $z_k$ are interior points of $E_j$ and $F_k$ respectively, for $j=1, \ldots, m$ and $k=1, \ldots, n$.
\begin{figure}[h]
\centering
\includegraphics[scale = 0.3]{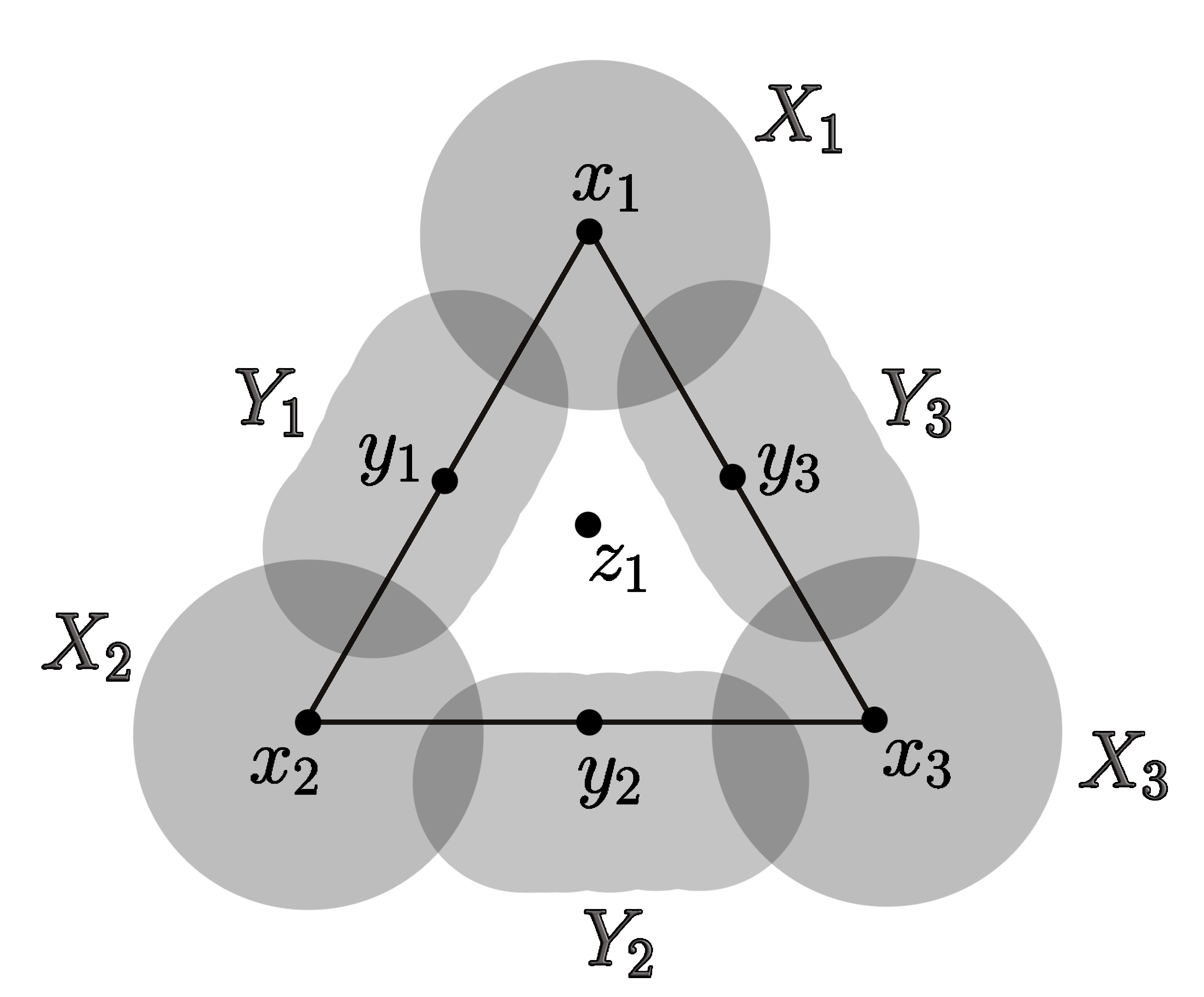}		
\caption{Choosing neighborhood $X_i$, $Y_j$, and $Z_k$}
\label{fig:trian}
\end{figure}
Regarding to the Figure \ref{fig:trian} one can choose the neighborhoods $X_i, Y_j, Z_k$ of $x_i, y_j, z_k$ in $P$ respectively such that 
\begin{enumerate}
\item $X_i \cap X_j = \emptyset$, $Y_i \cap Y_j = \emptyset$ and $Z_i \cap Z_j = \emptyset$ if $i \neq j$.
\item $y_i, z_i \notin X_j$, $x_i, z_i \notin Y_j$ and $x_i, y_i \notin Z_j$ for all $i , j$.
\item $X_{i_1} \cup Y_j \cup X_{i_2}$ is an open neighborhood of $E_j$ in $P$ if $x_{i_1}$ and $x_{i_2}$ are vertices of $E_j$.
\item $Z_k \cup Y_{k_1} \cup Y_{k_2} \cup Y_{k_3} \cup X_{i_1} \cup X_{i_2} \cup X_{i_3}$ is an open neighborhood of $F_k$ in $P$ if $E_{k_1}, E_{k_2}, E_{k_3}$ are edges of $F_k$ and $x_{i_1}, x_{i_2}, x_{i_3}$ are vertices of $F_k$.
\item $Z_k \subset F_k^0$ where $F_k^0$ is the interior of $F_k$. 
\item Each $X_i$ is either homeomorphic (preserving the codimension function) to $\RR^2_{\geq 0}$, or $\RR_{\geq 0} \times \RR$, or $\RR^2$. 
\item Each $Y_j$ is either homeomorphic (preserving the codimension function) to $\RR_{\geq 0} \times \RR$, or $\RR^2$. 
\item Each $Z_k$ is homeomorphic (preserving the codimension function) to $\RR^2$. 
\end{enumerate} 
(See Figure \ref{fig:triangulation}).
\begin{figure}[h]
\centering
\includegraphics[scale = 0.3]{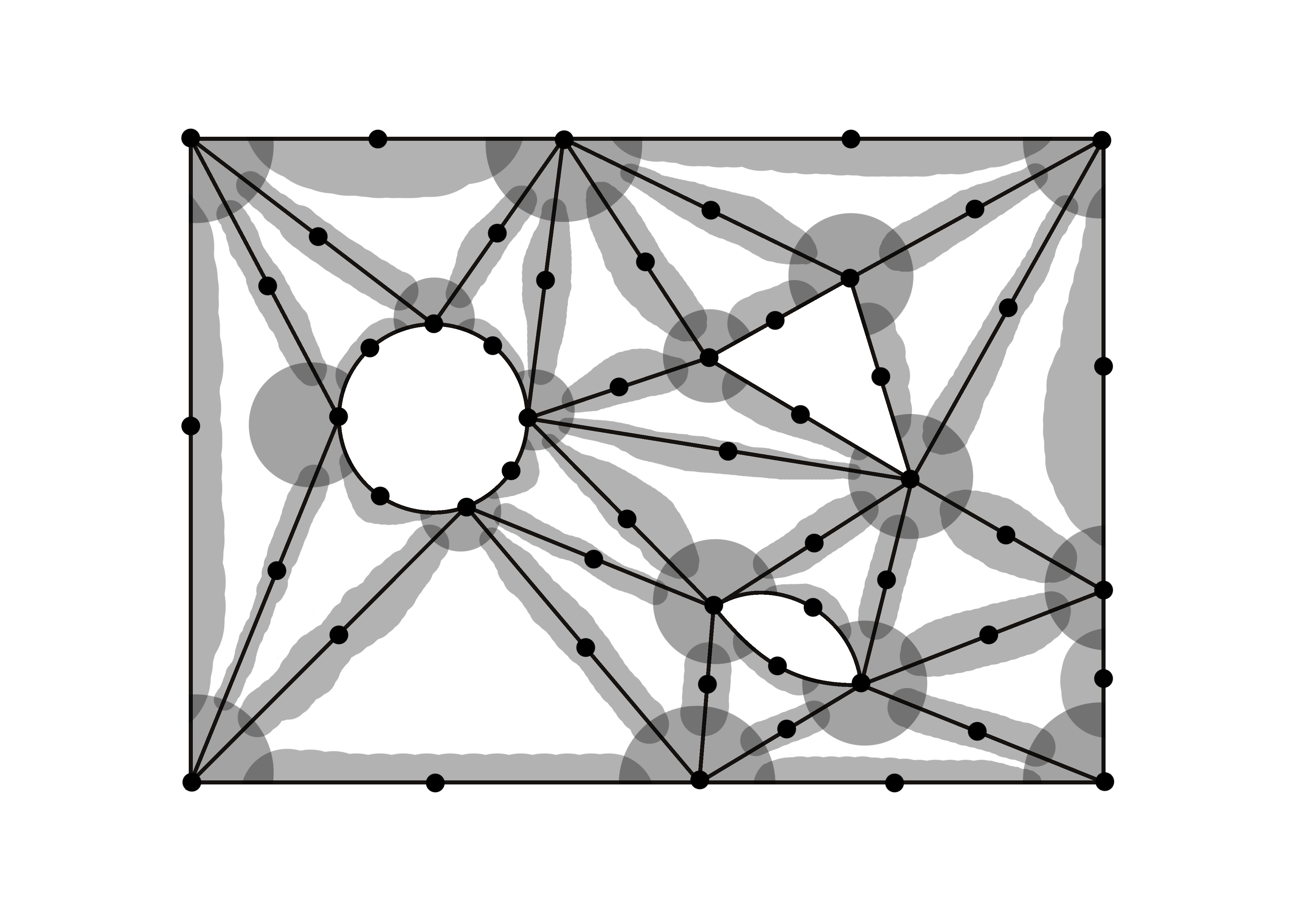}		
\caption{Example of covering for a triangulation.}
\label{fig:triangulation}
\end{figure}

Suppose $\mf{q} : M \to P$ is the orbit map. Let $U_i=\mf{q}^{-1}(X_i)$, $V_j=\mf{q}^{-1}(Y_j)$ and $W_k=\mf{q}^{-1}(Z_k)$ 
for $i=1, \ldots, l$, $j=1, \ldots, m$ and $k=1, \ldots, n$. Then $U_i, V_j$ and $W_k$ are equivariantly contractible to the orbit $\mf{q}^{-1}(x_i), \mf{q}^{-1}(y_j)$, and $\mf{q}^{-1}(z_k)$ respectively. By hypothesis $M$ has a fixed point say $\hat{x}_0$. By Lemma \ref{oc} $\mf{q}^{-1}(x_i), \mf{q}^{-1}(y_j)$, and $\mf{q}^{-1}(z_k)$ are contractible to $\hat{x}_0$. Thus $U_i, V_j$ and $W_k$ are equivariantly contractible to $\hat{x}_0$. Let $$A=\bigcup_{i=1}^l U_i, \quad B= \bigcup_{j=1}^m V_j \quad \mbox{and}\quad C=\bigcup_{k=1}^n W_k.$$ By the choice of $X_i$, $Y_j$ and $Z_k$ we get that $A, B$ and $C$ are contractible to $\hat{x}_0$. Clearly $M=A \cup B \cup C$. Therefore $cat(M) \leq 3$.
\end{proof}

We remark that the proof of previous theorem could be obtained by using Corollary 1.7 of \cite{OW}, however the current version of proof plays an important role in proof of Theorem \ref{qtm4}.
More generally we can prove the following.
\begin{corollary}\label{ubc}
Let $M$ be a $2n$-dimensional locally standard torus manifold over $P$. If there exists a triangulation for $P$, then 
$ cat(M) \leq n+1.$ 
\end{corollary}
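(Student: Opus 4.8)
The plan is to mimic the construction in the proof of Theorem \ref{catM3}, but to organize the simplices of the triangulation by dimension rather than relying on the low-dimensional coincidence that a $2$-dimensional nice manifold with corners has a particularly simple boundary structure. So let $\Sigma$ be the given triangulation of $P$, with the simplices grouped as $\Sigma^{(0)}, \Sigma^{(1)}, \dots, \Sigma^{(n)}$ according to dimension (since $\dim P = n$, these are all the possible dimensions). For each simplex $\sigma \in \Sigma$ pick an interior point $p_\sigma$ in the relative interior of $\sigma$.

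First I would, for each dimension $d$ from $0$ to $n$, build an open set $N_d \subset P$ that is a disjoint union, over all $d$-simplices $\sigma$, of small neighborhoods $Z_\sigma$ of the points $p_\sigma$, chosen so that $Z_\sigma$ deformation retracts onto $p_\sigma$ within $P$ (respecting the codimension function, as in conditions (6)–(8) of the proof of Theorem \ref{catM3}), so that distinct $Z_\sigma$ within the same dimension are disjoint, and so that $Z_\sigma$ meets the closed star of $\sigma$ only in cells of dimension $\geq d$. The key combinatorial fact — the analogue of conditions (3) and (4) above — is that the union $\bigcup_{d=0}^n N_d$ covers all of $P$: every point of $P$ lies in the relative interior of a unique simplex $\sigma$, and can be pushed along the simplicial structure into one of the chosen neighborhoods. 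This is the standard fact that a triangulated space is covered by $n+1$ "colored" open sets, one for each dimension of simplex, each a disjoint union of contractible pieces; the only wrinkle here is to carry it out compatibly with the boundary/corner structure of $P$, exactly as Figures \ref{fig:trian} and \ref{fig:triangulation} illustrate in the $2$-dimensional case.

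Next I would pull everything back through the orbit map $\mf{q} : M \to P$. Set $A_d = \mf{q}^{-1}(N_d)$ for $d = 0, \dots, n$; these are $\TT^n$-invariant open subsets of $M$ whose union is $M$. Since $M$ is a locally standard torus manifold over $P$ and $Z_\sigma$ deformation retracts onto $p_\sigma$ within $P$, the standard local-triviality argument (the same one used in Lemma \ref{oc} and in the proof of Theorem \ref{catM3}, via the model $M(P,\lambda)$ of Proposition \ref{clema2}) shows that each component $\mf{q}^{-1}(Z_\sigma)$ is equivariantly contractible to the orbit $\mf{q}^{-1}(p_\sigma)$, and hence $A_d$ is (non-equivariantly) contractible to a finite disjoint union of orbits. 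To conclude that $A_d$ is contractible to a point, I would invoke the hypothesis, implicit through Lemma \ref{oc}, that $M$ has a fixed point $\hat x_0$; Lemma \ref{oc} as stated is $4$-dimensional, so the cleanest route is first to prove its evident $2n$-dimensional generalization (every orbit of a locally standard torus manifold with a fixed point is contractible, by the identical argument: push along an injective path from $x$ to the image of $\hat x_0$ through the interior, using $\TT^n \cong \TT^n_x \oplus (\TT^n/\TT^n_x)$), and then each $\mf{q}^{-1}(p_\sigma)$, being a single orbit, is contractible to $\hat x_0$ inside $M$, so $A_d$ is contractible in $M$ to $\hat x_0$. Thus $\{A_0, \dots, A_n\}$ is a categorical open cover of $M$ of size $n+1$, giving $cat(M) \leq n+1$.

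I expect the main obstacle to be the combinatorial/geometric bookkeeping in the first step: producing the neighborhoods $Z_\sigma$ so that they simultaneously (a) are disjoint within each dimension, (b) cover $P$ when taken over all dimensions, and (c) respect the corner structure (each $Z_\sigma$ homeomorphic, preserving codimension, to an appropriate $\RR^k_{\geq 0} \times \RR^{n-k}$). In the body of Theorem \ref{catM3} this was handled by hand via the explicit list of eight conditions and the accompanying figures; here one needs the clean statement that the open stars of the barycenters of the simplices of a given dimension, suitably shrunk, are disjoint and that their union over all dimensions covers the polyhedron — a classical fact about simplicial complexes — together with a check that shrinking can be done inside $P$ compatibly with its manifold-with-corners atlas. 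One should also note the mild point that the hypothesis forces $M$ to have a fixed point: a triangulation of $P$ has vertices, and a vertex of $P$ (a codimension-$n$ face) pulls back under $\mf{q}$ to a fixed point, so this costs nothing. Everything else — the pullback, the equivariant contractibility of the pieces, and the assembly — is routine given Proposition \ref{clema2} and the (generalized) Lemma \ref{oc}.
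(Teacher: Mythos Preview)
Your overall approach is exactly what the paper intends: the corollary is stated immediately after Theorem~\ref{catM3} with no separate proof, only the remark ``More generally we can prove the following,'' so the implied argument is the dimension-by-dimension generalization you outline. The construction of the neighborhoods $Z_\sigma$, the pullback to $A_d = \mf{q}^{-1}(N_d)$, and the assembly into $n+1$ categorical sets are all in line with that.

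There is, however, a genuine gap in your final paragraph. You claim that ``the hypothesis forces $M$ to have a fixed point: a triangulation of $P$ has vertices, and a vertex of $P$ (a codimension-$n$ face) pulls back under $\mf{q}$ to a fixed point.'' This conflates vertices of the triangulation $\Sigma$ with vertices of $P$ as a manifold with corners. A $0$-simplex of $\Sigma$ may lie anywhere in $P$; only a point of codimension $n$ in $P$ (where $n$ facets meet) yields a fixed point under $\mf{q}^{-1}$, and the mere existence of a triangulation does not produce such points. Without a fixed point the orbits $\mf{q}^{-1}(p_\sigma)$ are positive-dimensional tori, hence not contractible in $M$, and your sets $A_d$ need not be categorical.

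This gap is not cosmetic. Example~\ref{22} exhibits a $4$-dimensional locally standard torus manifold $M \cong \zS^1 \times \mathbb{RP}^3$ over an annulus $P$; the annulus is certainly triangulable but has no codimension-$2$ points, so $M^{\TT^2} = \emptyset$, and there $cat(M) = 5 > 3 = n+1$. Thus the corollary as literally stated already fails, and your argument breaks down at exactly the missing hypothesis. With the added assumption $M^{\TT^n} \neq \emptyset$ (equivalently, $P$ has a vertex in the corner sense --- which is what the polygon hypothesis supplies in Theorem~\ref{catM3}), your proof goes through, modulo the routine extension of Lemma~\ref{oc} to dimension $2n$ that you already flag.
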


\begin{corollary}
Let $M$ be a $2n$-dimensional locally standard torus manifold over $P$, such that a connected component of $\partial P$ is the boundary of a polygon. If there exists a triangulation for $P$, then 
$ cat(M) = n+1.$ 
\end{corollary}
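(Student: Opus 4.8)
The plan is to combine the two one-sided bounds that are already in place. Since a connected component of $\partial P$ is the boundary of a polygon, Lemma \ref{lbc} applies verbatim (with the polygon playing the role of the $n$-dimensional simple polytope $Q$), giving $cup_{\ZZ}(M) \geq n$ and hence $cat(M) \geq n+1$ by Proposition 1.5 of \cite{CLOT}. For the reverse inequality, the hypothesis that $P$ admits a triangulation is exactly the input needed for Corollary \ref{ubc}, which yields $cat(M) \leq n+1$. Putting the two together gives $cat(M) = n+1$.

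So the proof is essentially one line: first invoke Lemma \ref{lbc} for the lower bound, then invoke Corollary \ref{ubc} for the upper bound, and conclude equality. The only thing worth checking is that the hypotheses of the two cited results are genuinely met: Lemma \ref{lbc} needs a connected component of $\partial P$ to be the boundary of a simple polytope, and a polygon is a $2$-dimensional simple polytope, so this is fine (note the boundary of a polygon is $1$-dimensional but it bounds the $2$-dimensional simple polytope, matching the formulation in Lemma \ref{lbc}); Corollary \ref{ubc} needs a triangulation of $P$, which is assumed. One should also note that Lemma \ref{lbc} implicitly uses that $M$ has a vertex in its orbit space, which follows from the polygon having vertices, so the fixed point $x_v = \mathfrak{q}^{-1}(v)$ exists.

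I do not expect any real obstacle here, since both directions are immediate consequences of results proved earlier in this section. If anything, the subtlety is purely bookkeeping: making sure that ``boundary of a polygon'' is interpreted consistently between the lower-bound lemma (which wants the polygon as the simple polytope) and the triangulation hypothesis (which is about $P$ itself, not just its boundary component). Both are satisfied under the stated hypotheses, so the corollary follows.

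\begin{proof}
Since a connected component of $\partial P$ is the boundary of a polygon $Q$, which is a $2$-dimensional simple polytope, Lemma \ref{lbc} gives $cat(M) \geq n+1$. On the other hand, $P$ admits a triangulation by hypothesis, so Corollary \ref{ubc} gives $cat(M) \leq n+1$. Hence $cat(M) = n+1$.
\end{proof}
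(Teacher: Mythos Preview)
Your proof is correct and matches the paper's own proof exactly: the paper simply writes ``This follows from Lemma \ref{lbc} and Corollary \ref{ubc},'' which is precisely the two-step argument you give. Your additional commentary on verifying the hypotheses is reasonable bookkeeping, but the core argument is identical.
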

\begin{proof}
This follows from Lemma \ref{lbc} and Corollary \ref{ubc}.
\end{proof}

Note that Theorem \ref{catM3} is not true for every locally standard torus manifold,
see Examples \ref{22} and \ref{e3}.

\begin{example}\label{22}
Consider the annulus $P$ and characteristic function $\lambda$ as in the Figure \ref{ann}. Note that $P \cong C \times I$ where $C$ is a circle and $I$ is the closed interval $[0,1]$. Then the following is an equivariant homeomorphism 
$$(\TT^2 \times C \times I)/\sim \quad \cong \quad C \times (\TT^2 \times I)/\sim$$ 
where $\sim$ is defined in \eqref{equ001}. By Section 2 in \cite{OR}, 
$$(\TT^2 \times I)/\sim \quad \cong \quad \mathbb{RP}^3 .$$ 
Therefore
$$ M(P, \lambda) \quad \cong \quad (\TT^2 \times C \times I)/\sim \quad \cong \quad C \times (\TT^2 \times I)/\sim \quad \cong \quad \zS^1\times \mathbb{RP}^3 .$$ 
Since $cat(\mathbb{RP}^3) = 4$ and $cat(\zS^1) =2$, using categorical sequences (see Section 1.5 in \cite{CLOT}),
one can show that
$$ cat(\zS^1\times \mathbb{RP}^3 ) \leq 5.$$ 
On the other hand by K\"{u}nneth theorem,
$$ H^{\ast}(\zS^1 \times \mathbb{RP}^3, \ZZ_2) = H^{\ast}(\zS^1 , \ZZ_2) \otimes_{\ZZ_2} H^{\ast}(\mathbb{RP}^3,\ZZ_2)$$
Therefore $ cup_{\ZZ_2}(\zS^1 \times \mathbb{RP}^3) = 4 $. Thus by Proposition 1.5 in \cite{CLOT}, 
$$ cat(\zS^1\times \mathbb{RP}^3 ) = 5 .$$
\begin{figure}[h]
\centering
\includegraphics[scale=0.2]{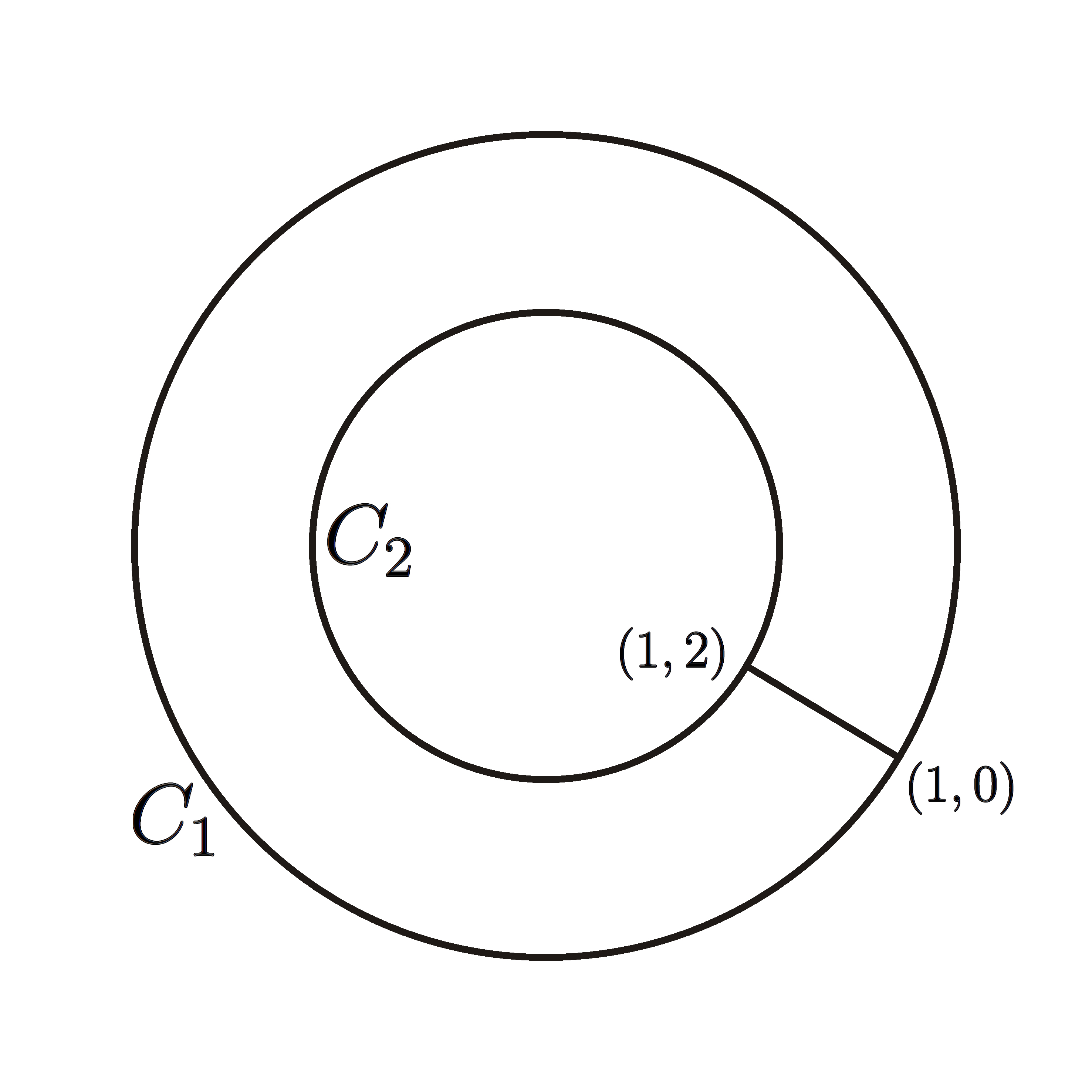}	 
\caption{An annulus in $\RR^2$.}
\label{ann}
\end{figure}

\end{example}

\section{Equivariant LS-category of Torus Manifolds}\label{sec5}

In this section, we compute equivariant LS-category of some locally standard torus manifolds.

\begin{theorem}\label{cat-quasi}
Let $M$ be a $2n$-dimensional quasitoric manifold with $k$ fixed points. Then 
$$ cat_{\TT^n} (M) = k.$$
\end{theorem}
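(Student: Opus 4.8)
The plan is to establish the two inequalities $cat_{\TT^n}(M) \geq k$ and $cat_{\TT^n}(M) \leq k$ separately. The lower bound is essentially immediate from the earlier results: a quasitoric manifold $M$ has exactly $k$ fixed points, and these are isolated (each corresponds to a vertex of the simple polytope $P$, over which the action looks like the standard $\TT^n$-action on $\CC^n$). Hence $|M^{\TT^n}| = k < \infty$, and by the Lemma stating that when $|X^G| < \infty$ every $G$-categorical set contains at most one fixed point, any $\TT^n$-categorical covering must have at least $k$ members. Thus $cat_{\TT^n}(M) \geq |M^{\TT^n}| = k$.

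For the upper bound I would construct an explicit $\TT^n$-categorical covering by $k$ open sets, guided by the combinatorics of the polytope $P$. Label the vertices $v_1,\dots,v_k$ of $P$ and, for each $i$, choose an open neighborhood $X_i$ of $v_i$ in $P$ such that the $X_i$ cover $P$ and each $X_i$ deformation retracts within $P$ onto a subset lying in a neighborhood of $v_i$ homeomorphic (preserving the codimension function) to $\RR^n_{\geq 0}$. This is possible because $P$ is a simple polytope: one can take $X_i$ to be the union of (small collared versions of) all open faces whose closure contains $v_i$, and these $k$ stars cover $P$ while being contractible in $P$ to $v_i$ along straight-line homotopies inside $P$. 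Setting $U_i = \mathfrak{q}^{-1}(X_i)$ gives a $\TT^n$-invariant open cover of $M$ by $k$ sets.

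The remaining point is that each $U_i$ is $\TT^n$-categorical. Over the model neighborhood $\RR^n_{\geq 0}$ of $v_i$, Proposition \ref{clema2} lets us identify $\mathfrak{q}^{-1}(X_i)$ with $(\TT^n \times X_i)/\!\sim$, and the straight-line contraction of $X_i$ to $v_i$ in $P$ lifts: over the face $F(p)$ containing $p$ the fiber is $\TT^n/\TT^n_{F(p)}$, and pushing $p$ toward $v_i$ along a path whose interior lies in successively lower-codimension faces collapses these quotient tori compatibly (this is exactly the mechanism used in the proof of Lemma \ref{oc}, now in dimension $2n$ rather than $4$). The endpoint is the single fixed point $x_{v_i} = \mathfrak{q}^{-1}(v_i)$, so $U_i \simeq_{\TT^n} x_{v_i}$, in particular $U_i$ is $\TT^n$-categorical. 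Hence $\{U_i\}_{i=1}^k$ is a $\TT^n$-categorical covering and $cat_{\TT^n}(M) \leq k$, completing the proof.

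The main obstacle I anticipate is verifying that the contraction of $X_i$ onto $v_i$ in $P$ genuinely lifts to a \emph{well-defined} and \emph{continuous} $\TT^n$-homotopy of $U_i$ in $M$ — i.e., that the identifications in \eqref{equ001} are respected as a point crosses from a face to a boundary face of higher codimension, so that the collapsing of the fiber tori is continuous at those transitions. Once one is careful to route the contracting paths through the interiors of faces (as in Lemma \ref{oc}) and to use the local-standardness splitting $\TT^n \cong \TT^n_{F} \oplus (\TT^n/\TT^n_F)$ on each face, this continuity goes through, but it is the step that requires the most care.
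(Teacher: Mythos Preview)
Your proposal is correct and follows essentially the same strategy as the paper: the lower bound via isolated fixed points, and the upper bound by taking, for each vertex $v$ of $P$, the invariant open set $\mathfrak{q}^{-1}(U_v)$ where $U_v$ is the ``open star'' of $v$ (the paper writes this explicitly as $U_v = P - \bigcup_{v\notin F} F$), and then lifting a contraction of $U_v$ onto $v$.

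The one place where the paper is tidier than your sketch is precisely the step you flag as the obstacle. Rather than routing paths through face interiors and invoking a splitting $\TT^n \cong \TT_F \oplus (\TT^n/\TT_F)$ as in Lemma~\ref{oc}, the paper simply uses convexity of $U_v$ to choose the straight-line homotopy $h:U_v\times I\to P$ and records the single combinatorial property it needs: for every face $F$ of $U_v$, $h(F\times I)\subseteq F$. This forces $\TT_{F(p)}\subseteq \TT_{F(h(p,r))}$ for all $p,r$, so the map $[t,p]\mapsto[t,h(p,r)]$ is automatically well-defined and continuous on $(\TT^n\times U_v)/\!\sim$, with no case analysis at face transitions. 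Your argument reaches the same conclusion, but this face-preserving formulation is the cleanest way to dispatch the well-definedness issue.
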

\begin{proof}
Since the fixed points are isolated, by Corollary \ref{lbcat} we have 
$$ cat_{\TT^n} (M) \geq k. $$
So it is enough to show that for any $v \in M^{\TT^n}$, there is a $\TT^n$-categorical subset $X_v$, such that
$$ M = \bigcup_{v \in M^{\TT^n}} X_v . $$ 
Let $\mathfrak{q} : M \to P$ be the orbit map. Then $P$ is a simple $n$-polytope and also $M^{\TT^n}$ corresponds bijectively to $V(P)$, the vertex set of $P$. So we may assume 
$$ M^{\TT^n} = V(P) .$$
For $v \in V(P)$, let
$$C_v = \bigcup_{v\not\in F} F , \quad U_v = P - C_v , \quad \text{and}\quad X_v = \mathfrak{q}^{-1} (U_v)$$
where $F$ is a face of $P$. Clearly $X_v$ is $\TT^n$-invariant. Since $U_v$ is a convex subset of $P$, it is contractible to $v$. So there exists a homotopy $h: U_v \times I \to P$ such that for all $x \in U_v$, $h(x,0)=x$, $h(x,1)= v$,
and also for any face $F$ of $U_v$ we have 
$$h(x,t) \in F , \quad \forall x\in F , t\in I .$$
By Lemma 1.8 of \cite{DJ}, 
$$ M \cong M(P,\lambda) \quad \text{and} \quad X_v \cong (\TT^n \times U_v)/\sim $$
where $\lambda$, $M(P,\lambda)$, and $\sim$ are recalled in \eqref{equ001}.
Therefore $h$ induces a homotopy 
$$ Id \times h : \TT^n \times U_v \times I \to \TT^n \times P $$
defined by $ ((t,x) ,r) \mapsto (t, h(x,r))$. Since for each face $F$ of $U_v$, we have 
$$ x \in F \Longrightarrow h(x,r) \in F , \quad\text{for all}\; r\in I,$$
$Id \times h$ induces a homotopy $H: X_v \times I \to M$, with $([t,x],r)\mapsto [t,h(x,r)]$.
Since
$$sH([t,x],r) = s[t,h(x,r)] = [st,h(x,r)] = H([st,x],r) = H(s[t,x],r),$$
therefore $H$ is $\TT^n$- homotopy. Also 
$$H(x,0)=x, \quad H(x,1)=\mathfrak{q}^{-1} (v)=\{v\}, \quad\forall x \in X_v.$$
Thus $X_v$ is $\TT^n$-categorical subset of $M$. Clearly $\{X_v: v\in V(P)\}$
covers $M$, therefore $cat_{\TT^n} (M) = \big| V(P) \big| = k$.
\end{proof}
\begin{theorem}\label{con1}
Let $M_i $ be a $2n$-dimensional quasitoric manifold over $P_i$, for i=1,2. Then
$$ cat_{\TT^n} (M_1 \#_{\TT^k} M_2) = \big| V(P_1)\big| + \big| V(P_2)\big| \; , \quad\text{for}\; k\geq 1.$$
\end{theorem}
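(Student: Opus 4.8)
The plan is to combine the lower bound coming from Corollary \ref{lbcat} with an explicit construction of a $\TT^n$-categorical covering whose cardinality is exactly $|V(P_1)| + |V(P_2)|$, mimicking the strategy already used in the proof of Theorem \ref{cat-quasi}. First I would identify the fixed point set of $M_1 \#_{\TT^k} M_2$. The equivariant connected sum is formed by removing invariant tubular neighborhoods $\tau_i$ of a $k$-dimensional orbit $\phi_i(\TT^k) \subset M_i$ (with $k \geq 1$, so these orbits are not fixed points) and gluing along the boundaries; hence no fixed point of $M_1$ or $M_2$ is destroyed, and the new manifold acquires no new fixed points. Thus $(M_1 \#_{\TT^k} M_2)^{\TT^n}$ is in bijection with $M_1^{\TT^n} \sqcup M_2^{\TT^n}$, which by the correspondence with vertices gives exactly $|V(P_1)| + |V(P_2)|$ isolated fixed points. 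Since these are isolated, Corollary \ref{lbcat} yields $cat_{\TT^n}(M_1 \#_{\TT^k} M_2) \geq |V(P_1)| + |V(P_2)|$.

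For the upper bound I would build the covering directly on the orbit space level. The orbit space $P$ of $M_1 \#_{\TT^k} M_2$ is obtained from $P_1$ and $P_2$ by deleting from each $P_i$ the relative interior of $\mathfrak{q}_i(\tau_i)$ (a neighborhood of the interior of a codimension-$k$ face, or of a vertex when $k=n$) and identifying the resulting pieces of boundary. For each vertex $v \in V(P_1)$ take the set $U_v = P_1 - C_v$ exactly as in the proof of Theorem \ref{cat-quasi}, where $C_v$ is the union of faces of $P_1$ not containing $v$; after the surgery one checks this set survives inside $P$ (the deleted region can be chosen disjoint from a chosen vertex, or, when the deletion is near a vertex, one shrinks $U_v$ radially away from the glued locus), and its preimage $X_v = \mathfrak{q}^{-1}(U_v)$ is $\TT^n$-categorical by the same convex-contraction-to-$v$ argument, since the contraction respects faces and hence descends through the equivalence relation \eqref{equ001}. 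Do the symmetric thing for each $w \in V(P_2)$. Then $\{X_v\}_{v \in V(P_1)} \cup \{X_w\}_{w \in V(P_2)}$ covers $M_1 \#_{\TT^k} M_2$: away from the gluing region this is immediate from $\{X_v\}$ covering $M_1 - \tau_1$ and $\{X_w\}$ covering $M_2 - \tau_2$, and one arranges that the neighborhoods $U_v, U_w$ adjacent to the glued boundary overlap enough to cover the gluing collar.

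I expect the main obstacle to be the bookkeeping at the gluing locus: showing that the convex sets $U_v$ (and their $M$-preimages) can be chosen compatibly on both sides so that they still cover the identified collar neighborhood, while remaining $\TT^n$-categorical and contractible onto the correct fixed point. In particular one must verify that the face-preserving contraction $h$ on $U_v$ still induces a well-defined $\TT^n$-homotopy on $X_v$ after the surgery, i.e. that the isotropy/face structure along $\partial \tau_i$ is compatible with the gluing diffeomorphism — this is where the hypothesis that the gluing is the natural one and the isotropy representations agree (from Example \ref{consum}) is used. The special case $k=n$, where $\tau_i$ is glued near a vertex and so one vertex-neighborhood of each $P_i$ is altered, should be checked separately but presents no new difficulty: the altered $U_v$ is still convex-like and contracts to $v$, and the missing collar is picked up by the neighborhoods coming from $P_2$.
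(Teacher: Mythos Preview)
Your approach is essentially the same as the paper's: lower bound via Corollary \ref{lbcat} and the fixed-point count, upper bound by pulling back one convex vertex-neighborhood $U_v$ per vertex of $P_1$ and $P_2$ as in Theorem \ref{cat-quasi}.

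Two corrections. First, you have the dimension bookkeeping inverted: a $k$-dimensional orbit projects to a point in the relative interior of a \emph{$k$-dimensional} face $E_i$ of $P_i$, not a codimension-$k$ face; in particular $k=n$ is the interior case (principal orbit), not a vertex case, so your final paragraph analyzes a situation that does not occur for $k\geq 1$. Second, the step you flag as ``bookkeeping at the gluing locus'' is exactly where the paper does something concrete that your proposal leaves vague: since $U_v$ always contains the interior of $P_i$, deleting $\mathfrak{q}_i(\tau_i)$ from $U_v$ can leave a set that is not contractible (it becomes annulus-like when $v$ is a vertex of $E_i$). The paper fixes this by choosing, for each vertex $v_{ij}$ of $E_i$, a simple arc $\alpha_{ij}$ in $E_i^0$ from $x_i=\mathfrak{q}_i(\phi_i(\TT^k))$ to $v_{ij}$, with the $\alpha_{ij}(I^0)$ pairwise disjoint, and setting
\[
V_v=\begin{cases}U_v-\mathfrak{q}_i(\tau_i)&v\in V(P_i)\setminus V(E_i),\\ U_v-\bigl(\mathfrak{q}_i(\tau_i)\cup\alpha_{il}(I^0)\bigr)&v\in V(E_i),\ v\neq v_{il},\end{cases}
\]
so that each $V_v$ is again face-preservingly contractible to $v$; then $Y_v=\mathfrak{q}_i^{-1}(V_v)$ is $\TT^n$-contractible to the corresponding fixed point, and a small $\TT^n$-invariant thickening $\widehat{Y}_v$ across the glued collar gives the covering. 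This slit-cutting is the missing ingredient in your sketch.
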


\begin{proof}
We adhere the notations of Example \ref{consum} and Theorem \ref{cat-quasi}.
By the construction
of equivariant connected sum we have $M_1 \#_{\TT^k} M_2$ is a locally standard torus manifold.
 Let  $k \geq 1$.
Then the number of fixed points of $\TT^n$-action on $M_1 \#_{\TT^k} M_2$ is $| V(P_1)| + |V(P_2)|$.
So by Corollary \ref{lbcat}, we have $$cat_{\TT^n}(M_1 \#_{\TT^k} M_2) \geq |V(P_1)| + |V(P_2)|.$$

 Let $\mathfrak{q}_i: M_i \to P_i$
be the orbit map and $\mathfrak{q}_i(\TT^k)=x_i$, so $x_i$ belongs to the relative interior
of a $k$-dimensional face $E_i$ of $P_i$ for $i=1, 2$. Let $\mathcal{L}(P_i)$ be the
face lattice of $P_i$ and $v \in V(P_i)$. Define
$$C_v=\bigcup_{v \notin F \in \mathcal{L}(P_i)} F, \quad U_v = P_i - C_v \quad \mbox{and} \quad X_v= \mathfrak{q}_i^{-1}(U_v).$$

Let $S_1=\{v_{11}, \ldots, v_{1p}\}$ and $S_2=\{v_{21}, \ldots, v_{2q}\}$ be the vertices of $E_1$
and $E_2$ respectively. For $i \in \{1, 2\}$, let $$\alpha_{ij} : I \to P_i$$ be a simple path
from $x_i$ to $v_{ij}$ such that:
\begin{itemize}
\item $\alpha_{ij}(I^0) \subset E_i^0$, where $E_i^0$ is the relative interior of $E_i$, and
\item $\alpha_{i1}(I^0) \cap \alpha_{i2}(I^0) = \emptyset$,
\end{itemize}
where $1 \leq j \leq p$ for $i=1$ and $1 \leq j \leq q$ for $i=2$.
Let
\begin{equation}
V_v = \left\{ \begin{array}{ll} U_v - \mathfrak{q}_{i}(\tau_i) & \mbox{if}~v \in V(P_i)-S_i~ \mbox{for}~ i \in \{1, 2\}\\
U_v - \{\mathfrak{q}_i(\tau_i) \cup \alpha_{il}(I^0)\} & \mbox{if}~v \in S_i~ \mbox{and}~v \neq v_{il}.
\end{array}\right.
\end{equation}
Let $P_1 \# P_2$ be the orbit space $(M_1 \#_{\TT^k} M_2)/\TT^n$. Note that $P_1 \# P_2$
 can be obtained from $P_1 - \mf{q}_1(\tau_1)$ and $P_2 - \mf{q}_2(\tau_2)$ by gluing
 $\mf{q}_1(\partial \tau_1)$ and $\mf{q}_2(\partial \tau_2)$ via a homeomorphism which
 preserve codimension function as well as characteristic function. So $V_v$ is an open
 subset of $P_1 \# P_2$ containing the vertex $v$. If $v \in V(P_i)$, then $Y_v = \mathfrak{q}_i^{-1}(V_v)$ is a $\TT^n$-invariant subset of $M_i$ which is equivariantly contractible to the fixed point $\mathfrak{q}_i^{-1}(v)$ by
 Proof of Theorem \ref{cat-quasi}. From the definition of equivariant connected sum,
there is a $\TT^n$-invariant open neighborhood $\widehat{Y}_v$ of $Y_v$ with a $\TT^n$-homotopy from
 $\widehat{Y}_v$ to $Y_v$. Then the collection 
$$\Big\{\widehat{Y}_v : v \in V(P_1) \cup V(P_2)\Big\}$$ is a $\TT^n$-categorical covering of $M_1 \#_{\TT^k} M_2$. Thus 
$$cat_{\TT^n}(M_1 \#_{\TT^k} M_2) \leq |V(P_1)| + |V(P_2)|.$$
\end{proof}

\begin{remark}
If $k=0$, then $M_1 \#_{\TT^k} M_2$ is a quasitoric manifold, therefore we can apply Theorem \ref{cat-quasi}.
\end{remark}
\begin{example}
Let $M_1$ and $M_2$ be $4$-dimensional quasitoric manifolds over triangle $P_1$, and rectangle $P_2$ respectively.
 Let $x_i$ be the interior point of $P_i$, $i=1, 2$. Then $\mathfrak{q}_i(\tau_i)$ is a neighborhood of $x_i$ with
 the boundary $C_i$ for $i=1,2$. Regarding to Theorem \ref{con1} here $E_1 = P_1$ and $E_2 = P_2$. 
\begin{figure}[h]
\centering
\includegraphics[scale = 0.6]{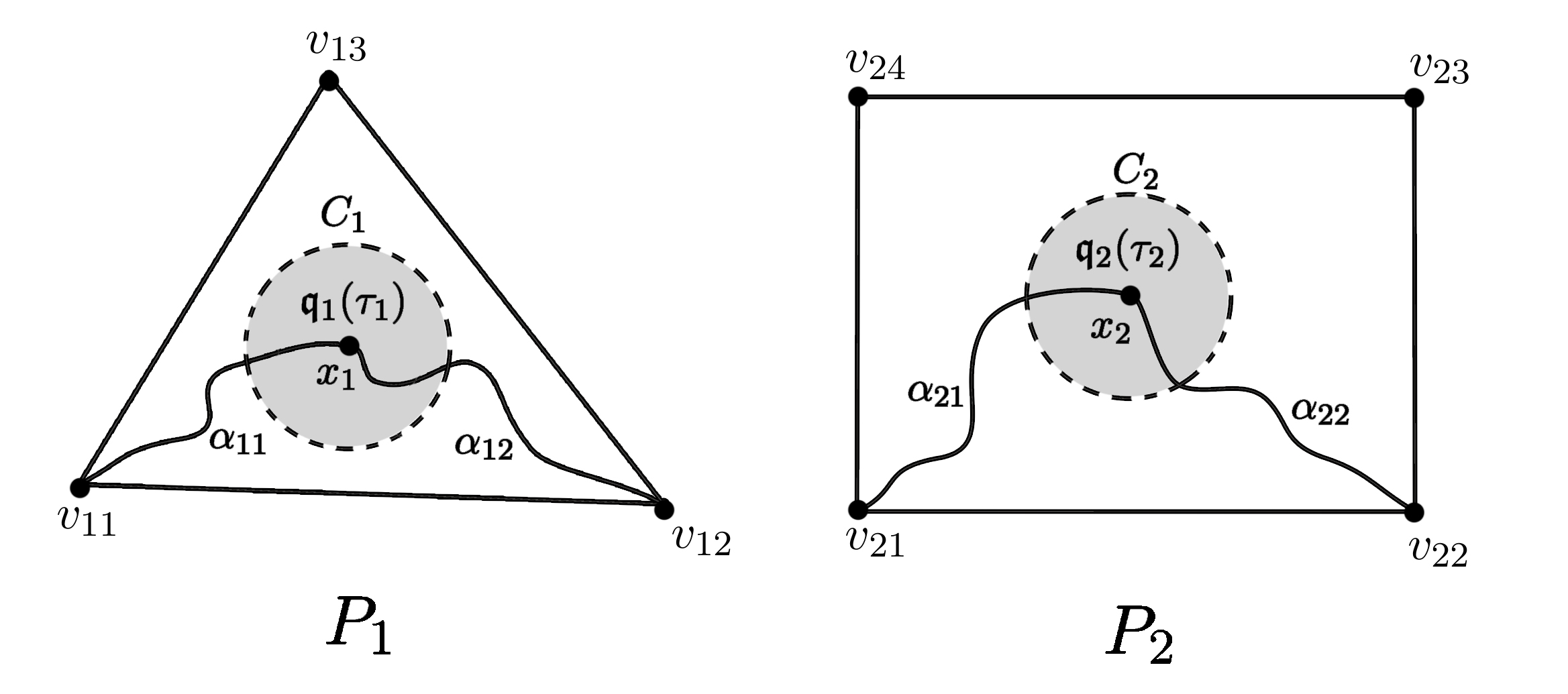}		
\end{figure}
So
\begin{itemize}
\item $V_{11} = P_1 - \big\{ \; \mathfrak{q}_1(\tau_1) \cup [v_{12}, v_{13}] \cup \alpha_{12}(I^0) \big\} .$
\item $V_{12} = P_1 - \big\{ \; \mathfrak{q}_1(\tau_1) \cup [v_{11}, v_{13}] \cup \alpha_{11}(I^0) \big\} .$
\item $V_{13} = P_1 - \big\{ \; \mathfrak{q}_1(\tau_1) \cup [v_{11}, v_{12}] \cup \alpha_{11}(I^0) \big\} .$
\item $V_{21} = P_2 - \big\{ \; \mathfrak{q}_2(\tau_2) \cup [v_{22}, v_{23}] \cup [v_{23}, v_{24}]\cup \alpha_{22}(I^0) \big\} .$
\item $V_{22} = P_2 - \big\{ \; \mathfrak{q}_2(\tau_2) \cup [v_{23}, v_{24}] \cup [v_{21}, v_{24}]\cup \alpha_{21}(I^0) \big\} .$
\item $V_{23} = P_2 - \big\{ \; \mathfrak{q}_2(\tau_2) \cup [v_{21}, v_{22}] \cup [v_{21}, v_{24}]\cup \alpha_{21}(I^0) \big\} .$
\item $V_{24} = P_2 - \big\{ \; \mathfrak{q}_2(\tau_2) \cup [v_{21}, v_{22}] \cup [v_{22}, v_{23}]\cup \alpha_{21}(I^0) \big\} .$
\end{itemize}
Here $[v_{ij}, v_{kl}]$ is the edge joining the vertices $v_{ij}$ and $ v_{kl}$.
Clearly $Y_{ij}= \mathfrak{q}_i^{-1}(V_{ij})$ is $\mathbb{T}^2$-invariant and 
equivariantly contractible to the fixed point $\mathfrak{q}_i^{-1}(v_{ij}).$ Note
$$M_1 \#_{\mathbb{T}^2} M_2 = Y_{11} \cup Y_{12} \cup Y_{13} \cup Y_{21} \cup
 \cdots \cup Y_{24}.$$ Thus $cat_{\mathbb{T}^2}(M_1 \#_{\mathbb{T}^2} M_2) =3 + 4=7.$
\end{example}

\begin{theorem}\label{por}
 Let $M$ and $N$ be two $2n$-dimensional quasitoric manifolds with $p$ and $q$ many fixed points respectively.
Then $cat_{\TT^n}(M \times N) =pq$, where $\TT^n$-action on $M\times N$ is diagonal.
\end{theorem}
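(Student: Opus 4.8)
The plan is to compute the lower bound and the upper bound separately, mirroring the strategy used for a single quasitoric manifold in Theorem~\ref{cat-quasi} but now working with the product and the diagonal $\TT^n$-action. For the lower bound, I would observe that the fixed point set of the diagonal action on $M \times N$ satisfies $(M \times N)^{\TT^n} = M^{\TT^n} \times N^{\TT^n}$, which has exactly $pq$ points. Since $M$ and $N$ are quasitoric, both fixed point sets are finite, hence every point of $(M\times N)^{\TT^n}$ is an isolated fixed point (the product of two quasitoric manifolds is still a metric, hence Hausdorff, space). Therefore by Lemma~\ref{isolatpt} (or directly by the lemma stating $|X^G| = cat_G(X^G)$ when $|X^G| < \infty$), together with Corollary~\ref{lbcat},
$$ cat_{\TT^n}(M \times N) \;\geq\; \big| (M\times N)^{\TT^n} \big| \;=\; pq. $$

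For the upper bound, I would reuse the explicit $\TT^n$-categorical covers produced in the proof of Theorem~\ref{cat-quasi}. Let $\mf{q}_M : M \to P$ and $\mf{q}_N : N \to R$ be the orbit maps, so $P$ and $R$ are simple polytopes with $|V(P)| = p$ and $|V(R)| = q$. For $v \in V(P)$ let $X_v = \mf{q}_M^{-1}(U_v)$ and for $w \in V(R)$ let $X'_w = \mf{q}_N^{-1}(U'_w)$ be the $\TT^n$-categorical subsets constructed there, each equivariantly contractible onto the single fixed point over the corresponding vertex, via the homotopies $H_v$ and $H'_w$ respectively. I claim $X_v \times X'_w$ is a $\TT^n$-categorical subset of $M \times N$ for the diagonal action: it is $\TT^n$-invariant, and the homotopy $(m,n,s) \mapsto (H_v(m,s), H'_w(n,s))$ is equivariant for the diagonal action (because each factor is), starts at the identity, and ends at the single point $\big(\mf{q}_M^{-1}(v), \mf{q}_N^{-1}(w)\big)$, which is a fixed point, hence an orbit. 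Since $\{X_v\}_{v \in V(P)}$ covers $M$ and $\{X'_w\}_{w \in V(R)}$ covers $N$, the collection $\{X_v \times X'_w : v \in V(P),\, w \in V(R)\}$ is a $\TT^n$-categorical cover of $M \times N$ of cardinality $pq$, so $cat_{\TT^n}(M \times N) \leq pq$. Combining the two inequalities gives $cat_{\TT^n}(M \times N) = pq$.

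The one step requiring a little care — and the place I'd expect to spend the most effort — is verifying that the product homotopy $(H_v, H'_w)$ genuinely terminates in a single orbit rather than merely in a product of orbits: this is exactly why the argument works here but would fail if the individual pieces only contracted to nontrivial orbits. It succeeds precisely because in the quasitoric setting each $X_v$ contracts onto a \emph{fixed point}, so the product endpoint is a fixed point and therefore a one-point orbit of the diagonal action. (Contrast this with the subtlety flagged after Theorem~\ref{GcatProd} and with the counterexamples to the product formula of \cite{CG}; here the hypothesis that both manifolds are quasitoric with isolated fixed points is exactly what rules out that pathology.) A secondary routine point is that no completely-normal/disjointification trick is needed, unlike in Theorem~\ref{GcatProd}, because we are not trying to beat the naive bound $pq$ — we want it on the nose, and the straightforward product cover already has the right size.
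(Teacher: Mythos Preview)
Your proof is correct and follows essentially the same approach as the paper: the lower bound via Corollary~\ref{lbcat} applied to the $pq$ isolated fixed points of the diagonal action, and the upper bound by taking products $X_v \times X'_w$ of the categorical sets from Theorem~\ref{cat-quasi} with the product homotopy $(H_v, H'_w)$ contracting onto a fixed point. Your added remark explaining why the product homotopy lands in a single orbit (because each factor contracts to a fixed point) is a useful clarification that the paper leaves implicit.
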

\begin{proof}
We adhere the notations of Theorem \ref{cat-quasi}. First observe that the diagonal $\TT^n$-action on $M \times N$ has $pq$
many fixed points. By Corollary \ref{lbcat}, $$cat_{\TT^n}(M \times N) \geq pq.$$

Let $X_u$ and $Y_v$ be $\TT^n$-categorical
open subsets of $M$ and $N$ respectively (as constructed in Theorem \ref{cat-quasi}), where $u \in M^{\TT^n}$ and $v \in N^{\TT^n}$. Let
$$H: X_u \times I \to X_u \quad \mbox{and} \quad K: Y_v \times I \to Y_v$$ be the respective $\TT^n$-homotopy such that
$$H(x, 0)=x, H(x, 1)=u, ~ \forall x \in X_u \quad \mbox{and} \quad K(y, 0)=y, K(y, 1)=v, ~ \forall y \in Y_v.$$
Then the $\TT^n$-homotopy $$L: X_u \times Y_v \times I \to X_u \times Y_v~
\mbox{defined by} ~L(x,y,r)=(H(x,r), K(y,r))$$ implies that $X_u \times Y_v
\subset M \times N$ is $\TT^n$-categorical. Since $$M \times N = \bigcup_{u\in M^{\TT^n}, v \in N^{\TT^n}} X_u \times Y_v,$$
 $cat_{\TT^n}(M \times N) \leq pq.$ Thus $cat_{\TT^n}(M \times N) = pq.$
\end{proof}

\begin{corollary}
Let $M_i$ be a $2n$-dimensional quasitoric manifold with $p_i$ many fixed points for $i=1, \ldots, l$.
Then $cat_{\TT^n}(M_1 \times \cdots \times M_l) = p_1 \ldots p_l$, where $\TT^n$-acts on
 $M_1 \times \cdots \times M_l$ diagonally. 
\end{corollary}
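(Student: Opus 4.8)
The plan is to argue by induction on $l$, with the case $l=2$ being exactly Theorem \ref{por} and the case $l=1$ being Theorem \ref{cat-quasi}. The lower bound is immediate in every case: the diagonal $\TT^n$-action on $M_1 \times \cdots \times M_l$ has fixed-point set $M_1^{\TT^n} \times \cdots \times M_l^{\TT^n}$, which consists of $p_1 \cdots p_l$ isolated points, since each $M_i^{\TT^n}$ is finite and each $M_i$ is Hausdorff. Hence Corollary \ref{lbcat} gives $cat_{\TT^n}(M_1 \times \cdots \times M_l) \geq p_1 \cdots p_l$, and only the upper bound requires work.

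For the upper bound I would not try to feed $M_1 \times \cdots \times M_{l-1}$ directly into Theorem \ref{por}, since that space is not a quasitoric manifold; instead I would reproduce the product construction of Theorem \ref{por} in the $l$-fold setting. For each $i$ and each $v \in M_i^{\TT^n}$, Theorem \ref{cat-quasi} supplies a $\TT^n$-invariant open set $X_v^{(i)} \subset M_i$ together with a $\TT^n$-homotopy $H^{(i)}_v : X_v^{(i)} \times I \to X_v^{(i)}$ satisfying $H^{(i)}_v(x,0)=x$ and $H^{(i)}_v(x,1)=v$ for all $x$, and the sets $X_v^{(i)}$ cover $M_i$. Given a tuple $(v_1, \ldots, v_l) \in M_1^{\TT^n} \times \cdots \times M_l^{\TT^n}$, put $Z_{(v_1,\ldots,v_l)} = X_{v_1}^{(1)} \times \cdots \times X_{v_l}^{(l)}$; this is open and $\TT^n$-invariant for the diagonal action, and
\[ L\big((x_1,\ldots,x_l),r\big) = \big(H^{(1)}_{v_1}(x_1,r), \ldots, H^{(l)}_{v_l}(x_l,r)\big) \]
defines a homotopy $Z_{(v_1,\ldots,v_l)} \times I \to Z_{(v_1,\ldots,v_l)}$ from the identity to the constant map at the fixed point $(v_1,\ldots,v_l)$. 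It is equivariant for the diagonal action because each $H^{(i)}_{v_i}$ is equivariant for the $\TT^n$-action on $M_i$, so $L(t\cdot(x_1,\ldots,x_l),r) = t\cdot L((x_1,\ldots,x_l),r)$. Thus each $Z_{(v_1,\ldots,v_l)}$ is a $\TT^n$-categorical subset, and since the $X_{v_i}^{(i)}$ cover each $M_i$, these $p_1 \cdots p_l$ sets cover $M_1 \times \cdots \times M_l$. Combining this with the lower bound gives the equality.

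If one prefers to phrase the argument as a genuine induction, the key observation is that the proof of Theorem \ref{por} uses only two features of each factor: that it has finitely many isolated fixed points, and that it admits a finite $\TT^n$-categorical covering whose members carry equivariant self-homotopies contracting them to fixed points. Both features are inherited by finite products — the fixed set of the diagonal action is the product of the fixed sets, and the coverings combine by taking products as above — so the inductive step reduces verbatim to the argument of Theorem \ref{por} applied to the pair $(M_1 \times \cdots \times M_{l-1},\, M_l)$. The only point that needs care, and the one I would state explicitly, is that the product homotopy is equivariant for the \emph{diagonal} action rather than the product action; this is automatic here since the same group element $t$ is applied in every coordinate.
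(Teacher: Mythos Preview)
Your proposal is correct. The paper states this corollary immediately after Theorem \ref{por} without supplying any proof, so you have filled in exactly the details the paper leaves implicit: the lower bound via Corollary \ref{lbcat} applied to the product of the (finite, isolated) fixed-point sets, and the upper bound by taking $l$-fold products of the categorical sets $X_v^{(i)}$ from Theorem \ref{cat-quasi} with the coordinatewise product homotopy, just as in the proof of Theorem \ref{por}. Your remark that one cannot literally invoke Theorem \ref{por} inductively (since $M_1\times\cdots\times M_{l-1}$ is not itself quasitoric) is a valid point of care, and your workaround---either repeating the product construction directly for $l$ factors, or observing that the proof of Theorem \ref{por} only uses properties that persist under products---is the right way to handle it.
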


\begin{theorem}\label{qtm4}
Let $M$ be a 4-dimensional locally standard torus manifold over $P$, and $s $ be the number
of circles in $\partial P$ (see proof of Theorem 4.5). Then $ \big| M^{\TT^2} \big| + 2s \leq 
cat_{\TT^2}M \leq \big| M^{\TT^2} \big| + 2(s+1)$.
\end{theorem}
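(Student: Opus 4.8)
The plan is to prove the two inequalities separately; write $\mathfrak{q}\colon M\to P$ for the orbit map. Since $M$ is $4$-dimensional and locally standard, near each fixed point $M$ is modelled on $\CC^2$ with the standard $\TT^2$-action, so fixed points are isolated and $M^{\TT^2}$ is finite; by the lemma that a $G$-categorical set contains at most one fixed point when $|X^G|<\infty$, every $\TT^2$-categorical subset of $M$ contains at most one fixed point. From the proof of Theorem \ref{catM3}, $P$ is a nice $2$-dimensional manifold with corners whose boundary components are boundaries of polygons, eye shapes, or circles, and $s$ counts the circle components. As $P$ is a compact surface with nonempty boundary, $H^2(P;\ZZ)=0$, so by Proposition \ref{clema2} we may take $M=M(P,\lambda)$ and work with the explicit model.

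\emph{Lower bound.} Fix a minimal $\TT^2$-categorical cover $\{U_1,\dots,U_N\}$ with $N=cat_{\TT^2}(M)$. For a circle component $C\subset\partial P$ with isotropy subcircle $\TT^1_C$, put $T_C=\mathfrak{q}^{-1}(C)$; since every point of the facet $C$ lies in its relative interior, $T_C\cong(\TT^2/\TT^1_C)\times C\cong\TT^2$. The crucial observation is that a $\TT^2$-categorical $U$ meeting $T_C$ stays inside $T_C$: if $x\in U\cap T_C$ and $F\colon U\times I\to M$ is a categorical homotopy, then $t\mapsto F(x,t)$ is a $\TT^2$-path, so by Lemma \ref{ht} the isotropy always contains $\TT^1_C$; hence $\mathfrak{q}(F(x,t))$ stays in the union of faces of $P$ whose associated subtorus $\TT_F$ contains $\TT^1_C$, and because $C$ has no vertices and is an entire component of $\partial P$, it is a connected component of that union, so $\mathfrak{q}(F(x,t))\in C$ and $F(x,t)\in T_C$ for all $t$. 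Thus the target orbit $\oO(z)$ of $U$ lies in $T_C$ and is a circle. Two consequences: (i) if $T_C\subset U$, then $T_C$ would be a homotopy-retract of its target circle $\oO(z)$, contradicting $H_1(T_C;\ZZ)=\ZZ^2$, so at least two of the $U_i$ meet $T_C$; (ii) $U$ contains no fixed point $v$, since otherwise $F(v,\cdot)$ stays in $M^{\TT^2}$ (as in Lemma \ref{GcatFix}) and forces $\oO(z)=\{F(v,1)\}$, a fixed point lying in $T_C$, impossible since points of $T_C$ are not fixed. Also $T_C\cap T_{C'}=\emptyset$ for distinct circle components, so no $U_i$ meets two of them. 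Therefore the $\ge|M^{\TT^2}|$ sets containing a fixed point, together with, for each of the $s$ circles, two sets meeting its $T_C$, are pairwise distinct, giving $N\ge|M^{\TT^2}|+2s$.

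\emph{Upper bound.} We build a cover of $M$ in three groups. (1) For each polygon- or eye-type boundary component $D$ with vertices $w_1,\dots,w_r$ (so the total is $|M^{\TT^2}|$), cover $D$ by arcs $I_j$, where $I_j$ consists of $w_j$ together with a half-facet-arc on each side of it and contains no other vertex; choosing, as in the proof of Theorem \ref{catM3}, a contractible face-preserving neighborhood $N_j$ of $I_j$ in $P$ retracting onto $w_j$, the set $\mathfrak{q}^{-1}(N_j)$ is $\TT^2$-contractible to the fixed point over $w_j$; this yields $|M^{\TT^2}|$ sets covering $\mathfrak{q}^{-1}$ of a collar of $\partial P$ minus the circles. (2) For each circle $C$, cover $C$ by two arcs $J_1,J_2$ and take $\mathfrak{q}^{-1}(J_a\times[0,\epsilon))$ in a collar $C\times[0,1)$; each such set is an arc times a solid torus, hence $\TT^2$-contractible to its core orbit and so $\TT^2$-categorical; this yields $2s$ sets covering $\mathfrak{q}^{-1}$ of a collar of the circles. (3) Let $P^{\circ}=\mathrm{int}\,P$, and let $Q\subset P^{\circ}$ be $P$ with a closed collar of $\partial P$ removed: a compact surface with boundary over which $\mathfrak{q}$ is a principal $\TT^2$-bundle. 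Present $Q$ as a $2$-disk with $g$ bands attached, let $c_1,\dots,c_g$ be the co-core arcs, and (after slightly enlarging inside $P^{\circ}$) let $S_1$ be an open neighborhood of $Q$ with the $c_l$ removed, which is contractible, and $S_2=\bigsqcup_l D_l$ a disjoint union of contractible open neighborhoods of the $c_l$, so $S_1\cup S_2$ is an open neighborhood of $Q$. Then $\mathfrak{q}^{-1}(S_1)\cong\TT^2\times S_1$ is $\TT^2$-contractible to a free orbit, and $\mathfrak{q}^{-1}(S_2)=\bigsqcup_l(\TT^2\times D_l)$; contracting each component to a free orbit and then using that a path in $P^{\circ}$ lifts to a $\TT^2$-path between free orbits, we slide all these orbits into one, making $\mathfrak{q}^{-1}(S_2)$ $\TT^2$-categorical. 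Choosing the collars in (1)–(2) wide enough to overlap the collar removed in (3), the three groups cover $M$ with $|M^{\TT^2}|+2s+2$ sets, so $cat_{\TT^2}(M)\le|M^{\TT^2}|+2(s+1)$.

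The main obstacle is the lower bound — specifically the confinement claim that a categorical set touching $T_C$ cannot escape it, and is therefore useless both for covering a second circle-preimage and for carrying a fixed point; this is what makes the contributions $2s$ and $|M^{\TT^2}|$ genuinely additive, and it rests on combining the Hurder--T\"{o}ben monotonicity of isotropy along $G$-paths (Lemma \ref{ht}) with the position of a circle component in the face poset of $P$. The upper bound is mostly bookkeeping on top of the neighborhood constructions already used for Theorems \ref{cat-quasi} and \ref{catM3}, the one new point being the amalgamation of free orbits over $P^{\circ}$ along $\TT^2$-paths.
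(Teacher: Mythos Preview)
Your proof is correct. The lower bound follows the same line as the paper's: both exploit isotropy monotonicity along $G$-paths (Lemma~\ref{ht}) to show that a categorical set meeting $T_C=\mathfrak{q}^{-1}(C)$ has its target orbit inside $T_C$, hence cannot carry a fixed point and cannot touch a second $T_{C'}$, while each $T_C\cong\TT^2$ needs at least two categorical sets. Your version is in fact a bit more complete: you make the confinement of the entire $G$-path to $T_C$ explicit and spell out the $H_1$ obstruction, whereas the paper leaves the step ``$\mathfrak{q}^{-1}(C_i)$ cannot be covered by one categorical set'' essentially implicit. One wording nit: the sentence ``$U$ meeting $T_C$ stays inside $T_C$'' is not literally what you prove or need---$U$ may extend beyond $T_C$; what you actually establish (and use) is that the homotopy image of $U\cap T_C$ stays in $T_C$, forcing the target orbit into $T_C$.

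For the upper bound the constructions genuinely differ. The paper recycles the triangulation of $P$ from Theorem~\ref{catM3}, packaging its elementary neighborhoods $X_i,Y_j,Z_k$ into $k$ corner pieces $\mathcal{R}_i$ (one per vertex), $2s$ pieces along the circle components, and two interior pieces $\bigcup_{y_i\in P^0}Y_i$ and $\bigcup_{z_j\in P^0}Z_j$. You instead build the cover directly: vertex--arc neighborhoods for polygon/eye components, two arc--collars per circle, and a handle decomposition of the interior surface into one contractible piece plus disjoint co-core neighborhoods, amalgamated via $\TT^2$-paths between free orbits. Both yield $|M^{\TT^2}|+2s+2$ sets. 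Your route is more self-contained and avoids the triangulation machinery; the paper's route has the advantage of reusing the cover already constructed in Theorem~\ref{catM3}. Your explicit check that $H^2(P;\ZZ)=0$ (compact surface with nonempty boundary) to justify $M=M(P,\lambda)$ via Proposition~\ref{clema2} makes precise a reduction the paper uses only tacitly.
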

\begin{proof}
By Corollary \ref{lbcat}
$$ cat_{\TT^2}(M) \geq \Big|M^{\TT^2}\Big|.$$

Let $\mathfrak{q}: M \to P$ be the orbit map, and
$$X=\mathfrak{q}^{-1}(\bigcup_{i=1}^s C_i)=\bigcup_{i=1}^s\mathfrak{q}^{-1} (C_i),$$
 where $C_1, \ldots, C_s$ are the circles in $\partial{P}$.
We claim that if a $\TT^2$-categorical open subset $U$
contains a fixed point, then $U \cap X = \emptyset$. Suppose there is $z \in U \cap X$ and
$U$ contains a fixed point $v$. So $\oO(z) \subset U$. Since $ z \in X$, $\mathfrak{q}(z) \in C_i$ for some $i \in
\{1, \ldots, s\}$. Since $\TT^2$-action on $M$ is locally standard and $C_i \subset \partial{P}$,
$\oO(z)$ is homeomorphic to a circle and isotropy of $z$ is a circle subgroup of $\TT^2$.

Suppose $H: \oO(z) \times I \to M$ be a $\TT^2$-path from $\oO(z)$ to $\oO(v)=v$.
Then $\mathfrak{q} \circ H : z \times I \to P$ is a path from $\mathfrak{q}(z)$ to $\mathfrak{q}(v)$. Observe
that $Im(\mathfrak{q} \circ H) \cap P^0 \neq \emptyset $. Since isotropy group
over the interior $P^0$ is trivial, it is a contradiction to Lemma \ref{ht}.
This proves our claim.

On the other hand for each $i \in \{1, \cdots , s \}$, $\mathfrak{q}^{-1}(C_i)$ is homeomorphic to $C_i \times \zS^1$,
for some circle subgroup $\zS^1$ of $\TT^2$. Also for all $y \in \mf{q}^{-1}(C_i), \; \TT^2_y \cong \zS^1$.
Since $\TT^2$-action on $M$ is locally standard, there exists an equivariant tubular neighborhood $N_i$ of $\mf{q}^{-1}(C_i)$ such that $\TT^2_x$ is trivial for all $ x \in N_i - \mf{q}^{-1}(C_i)$. So by Lemma \ref{ht}, there is no $G$-path from an orbit in $\mathfrak{q}^{-1}(C_i)$ to any orbit in $M-\mf{q}^{-1}(C_i)$, and therefore $\mf{q}^{-1}(C_i)$ cannot be covered by a $\TT^2$-categorical open set. 

Suppose $U$ is a $\TT^2$-categorical subset such that 
$$ U \cap \mf{q}^{-1}(C_i) \neq \emptyset \neq U \cap \mf{q}^{-1}(C_j), \quad \text{for some } i \neq j.$$
So $U$ is $G$-homotopic to an orbit $\oO(z)$ in $M$. Therefore there exist a $G$-path from an orbit in $\mf{q}^{-1}(C_i)$ to $\oO(z)$, meaning $\oO(z) \subset \mf{q}^{-1}(C_i)$. Similarly $\oO(z) \subset \mf{q}^{-1}(C_j)$ which is a contradiction because $\mf{q}^{-1}(C_i)$ and $\mf{q}^{-1}(C_i)$ are disjoint
by locally standardness of the action. 

~\\
Hence 
$$ \Big| M^{\TT^2} \Big| + 2s \leq cat_{\TT^2}(M).$$

Let $Q_1, \ldots, Q_k$ be the edges
of $P$. To prove the other inequality, we adhere the notations of the proof of Theorem \ref{catM3}.
Since the fixed point set corresponds bijectively to the vertex set of $P$, we may assume
$\Big|M^{\TT^2} \Big| = x_1, \ldots, x_k$ where $k < l$.  Now choose an orientation on $P$ such that the vertex $x_i$ is the initial vertex of $Q_i$.
We denote the open cover of $P$ constructed in the proof of Theorem \ref{catM3} by $\mathcal{U}(P)$.
Let 
$$ R_i=\{U \in \mathcal{U}(P): U \cap Q_i \neq \emptyset ~\mbox{and}~ (V(Q_i) - \{x_i\}) \notin U\} ~\mbox{and}~ \mathcal{R}_i= \bigcup_{U \in R_i} U.$$
For simplicity, we may assume $x_{k+j}, x_{k+s+j} \in C_{j}$ for $j=1, \ldots, s$. Let
$$ R_{k+j}=\{U \in \mathcal{U}(P): U \cap C_j \neq \emptyset ~\mbox{and}~ x_{k+j} \notin U\}
~\mbox{and}~ \mathcal{R}_{k+j}= \bigcup_{U \in R_{k+j}} U.$$
It is an easy exercise to show that there is a codimension function preserving homeomorphism from $\mathcal{R}_i$
to $\RR^2_{\geq 0}$ if $1 \leq i \leq k$ and from $\mathcal{R}_{k+j}$ to $\RR \times \RR_{\geq 0}$ if
$1 \leq j \leq s$. So $\mf{q}^{-1}(\mathcal{R}_i)$ is equivariantly contractible to the orbit
$\mf{q}^{-1}(x_i)$ for $i=1, \ldots, k+s$. Also $\mf{q}^{-1}(X_{k+s+j})$ is equivariantly contractible
to $\mf{q}^{-1}(x_{k+s+j})$ for $j=1, \ldots, s$.

Let 
$$\mathcal{R}_{k+2s+1}=\bigcup_{y_i \in P^0}Y_i ~ \mbox{and}~ \mathcal{R}_{k+2s+2}=\bigcup_{z_j \in P^0}Z_j.$$
Recall that $Y_i$ and $Z_j$ are homeomorphic to open disc and subset of $ P^0$ if $y_i \in P^0$.
So $\mf{q}^{-1}(Y_i)$ and $\mf{q}^{-1}(Z_j)$ are equivariantly contractible to $\mf{q}^{-1}(y_i)$ and
$\mf{q}^{-1}(z_j)$ respectively. Since $Y_{i_1} \cap Y_{i_2} = Z_{j_1} \cap Z_{j_2} = \emptyset$ for 
$i_1 \neq i_2, j_1 \neq j_2$ and $P^0$ is path connected space, $\mf{q}^{-1}(\mathcal{R}_{k+2s+1})$ and 
 $\mf{q}^{-1}(\mathcal{R}_{k+2s+2})$ are equivariantly contractible to an orbit. Note that
 $$M= \bigcup_{i=1}^{k+s} \mf{q}^{-1}(\mathcal{R}_i) \cup \bigcup_{j=1}^s \mf{q}^{-1}(X_{k+s+j}) \cup
 \mf{q}^{-1}(\mathcal{R}_{k+2s+1}) \cup \mf{q}^{-1}(\mathcal{R}_{k+2s+2}).$$
 Therefore $ cat_{\TT^2}M \leq \big| M^{\TT^2} \big| + 2(s+1) $.
 \end{proof}

\section{Examples}\label{sec6}

\begin{example}\label{e1}
Consider the natural $\TT^2$-action on 
$$\zS^3=\{(z_1, z_2) \in \CC^2 : |z_1|^2+|z_2|^2=1\} ,$$
which is defined by 
$$(t_1, t_2) \cdot (z_1, z_2) \to (t_1z_1, t_2z_2).$$ 
Since all the isotropy groups $\TT^2_x$ are trivial except for $x=(1,0)$ and $x=(0, 1)$, by Lemma \ref{ghp} the orbits $\oO(1,0)$ and $\oO(0, 1)$ can not belong to a same $\TT^2$-categorical subset of $\zS^3$ and therefore $ cat_{\TT^2}(\zS^3) \geq 2$.
Let 
$$U_1 = \zS^3 - \oO(1,0) \quad \text{and} \quad U_2=\zS^3-\oO(0,1).$$
Let $B^2$ be the open disk. Since $U_1$ and $U_2$ are equivariantly homeomorphic to $\zS^1 \times B^2$,
there are $\TT^2$-homotopies from $U_1$ and $U_2$ onto the orbits $\oO(0, 1)$ and $\oO(1, 0)$ respectively.
Thus $cat_{\TT^2}(\zS^3) =2.$ 

\end{example}
\begin{example}\label{e2}
Consider the natural $\TT^2$-action on
$$ \zS^5=\big\{(z_1, z_2, z_3) \in \CC^3 : \; |z_1|^2+|z_2|^2+|z_3|^2 =1 \big\},$$
which is defined by 
$$(t_1, t_2) \cdot (z_1, z_2, z_3) \to (t_1z_1, t_2z_2, z_3).$$ 
An orbit of this action is either a point, circle, or torus; And $\zS^5$ is not contractible to any of them. So $ cat_{\TT^2}(\zS^5) \geq 2$.
Let 
$$ V_1 = \zS^5 -\{(0,0, -1)\} \quad \text{and} \quad V_2=\zS^5-\{(0,0,1)\}.$$
Clearly $V_1$ and $V_2$ are equivariantly contractible to the fixed points $(0,0,1)$ and $(0,0,-1)$ respectively. So they make a $\TT^2$-categorical covering of $\zS^5$. 
Thus $cat_{\TT^2}(\zS^5) =2.$ 
\end{example}
\begin{lemma}
Consider the $\TT^2$-actions defined in the Examples \ref{e1} and \ref{e2}. For any subgroup $H$ of $\TT^2$, the fixed point sets $(\zS^3)^H$ and $(\zS^5)^H$ are path-connected. Hence $\zS^3$ and $\zS^5$ are $\TT^2$-connected.
\end{lemma}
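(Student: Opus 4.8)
The statement asserts that for the specific $\TT^2$-actions of Examples \ref{e1} and \ref{e2}, every fixed-point set $(\zS^3)^H$ and $(\zS^5)^H$ is path-connected, so that both spaces are $\TT^2$-connected. The plan is to enumerate the closed subgroups $H$ of $\TT^2$ up to the only data that matters, namely the subset of coordinates on which $H$ acts trivially, and to check path-connectedness of the corresponding fixed set directly in each case. Since the actions are coordinatewise (with trivial action on the last coordinate in the $\zS^5$ case), the fixed set of any $H$ will itself be an odd-dimensional sphere (a coordinate subsphere), hence path-connected; the only subtlety is identifying which subgroups fix which coordinates.

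First I would record the general principle: for the $\zS^3$-action $(t_1,t_2)\cdot(z_1,z_2)=(t_1z_1,t_2z_2)$, a point $(z_1,z_2)$ is fixed by $H$ iff $t_1z_1=z_1$ and $t_2z_2=z_2$ for all $(t_1,t_2)\in H$; thus $z_1=0$ unless the projection of $H$ to the first $\zS^1$-factor is trivial, and similarly for $z_2$. So $(\zS^3)^H$ is one of: all of $\zS^3$ (when $H$ is trivial), the circle $\{z_1=0\}$, the circle $\{z_2=0\}$, or (when both projections are nontrivial, e.g. $H=\TT^2$ or $H$ a nontrivial circle/finite subgroup surjecting onto both factors) the empty set $\{z_1=z_2=0\}\cap\zS^3=\emptyset$. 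Each nonempty case is a sphere $\zS^3$ or $\zS^1$, hence path-connected, and the empty set is path-connected by convention. For $\zS^5$ with $(t_1,t_2)\cdot(z_1,z_2,z_3)=(t_1z_1,t_2z_2,z_3)$, the coordinate $z_3$ is never constrained, so $(\zS^5)^H$ is one of $\zS^5$, the $3$-sphere $\{z_1=0\}$, the $3$-sphere $\{z_2=0\}$, or the $1$-sphere $\{z_1=z_2=0\}=\{(0,0,z_3):|z_3|=1\}$, according to which of the two circle-projections of $H$ are trivial; all of these are spheres and hence path-connected.

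With this case analysis done, the conclusion that $\zS^3$ and $\zS^5$ are $\TT^2$-connected is immediate from the definition of $G$-connectedness preceding Lemma \ref{orbit}, since we have checked path-connectedness of $X^H$ for every closed subgroup $H$. I do not expect a genuine obstacle here; the only thing to be careful about is being exhaustive in the subgroup bookkeeping — in particular not overlooking finite subgroups and one-parameter subgroups whose image in $\TT^2$ is a "skew" circle, all of which still project onto both coordinate factors and hence contribute either the empty set or a coordinate subsphere, never a disconnected set. One can phrase this cleanly by noting that the only invariant of $H$ relevant to the fixed set is the pair of images $(\pi_1(H),\pi_2(H))$ under the two coordinate projections $\TT^2\to\zS^1$, and each $\pi_i(H)$ is either trivial or all of $\zS^1$ is not needed — it suffices that $\pi_i(H)$ is either trivial or nontrivial, the nontrivial case forcing the corresponding coordinate to vanish.
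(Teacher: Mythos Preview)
Your proof is correct and follows essentially the same approach as the paper: both argue by case analysis on the subgroup $H$, showing that $(\zS^3)^H$ is either $\zS^3$, a coordinate circle, or empty, and that $(\zS^5)^H$ is always a coordinate subsphere. Your phrasing via the coordinate projections $\pi_1(H),\pi_2(H)$ is a cleaner way to organize the same case split the paper does by inspecting elements $(\alpha,\beta)\in H$, and you spell out the $\zS^5$ case explicitly where the paper simply says ``similarly''.
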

\begin{proof}
If $H=\{(1,1)\}$ is the trivial subgroup of $\TT^2$, then $(\zS^3)^H = \zS^3$, and it is path-connected. 
\begin{itemize}
\item Assume $H$ is non-trivial and there exist $\alpha \neq 1 \neq \beta$ such that ${p_0 = (\alpha,\beta) \in H}$. In this case
$$ (\zS^3)^H \subset (\zS^3)^{\{p_0\}} =\emptyset.$$
\item Assume $H$ is non-trivial and for all elements $(\alpha,\beta)$ in $H$, either $\alpha =1 $ or $\beta =1$.
If all elements of $H$ look like $(1,\beta)$, then
$$ (\zS^3)^H =\Big\{ (z_1,0) \in \zS^3 \; : \; \abs{z_1}^2 =1  \Big\} \cong \zS^1 .$$
Similarly if all elements of $H$ look like $(\alpha,1)$, then $ (\zS^3)^H \cong \zS^1 $.
\end{itemize}
Thus in any case $(\zS^3)^H$ is path-connected. Similarly one can show that $(\zS^5)^H$ is path-connected.
\end{proof}

Note that every compact metric space is completely normal, so by Lemma \ref{GcomNor}, $\zS^3$, $\zS^5$ and $\zS^3 \times \zS^5$ are $\TT^2$-completely normal spaces.

\begin{example}[Counterexample to Theorem 3.15 in \cite{CG}]\label{e4}
We adhere notations of Examples \ref{e1} and \ref{e2}. Let $X=\zS^3 \times \zS^5$. Consider the diagonal $\TT^2$-action on $X$, which is defined by 
$$t \cdot (p, q) \to (t \cdot p, t \cdot q).$$
Let $ A_0 =\emptyset, A_1 = U_1, A_2 =\zS^3$ and $ B_0 =\emptyset, B_1 = V_1, B_2 =\zS^5$. Clearly 
 $A_0 \subset A_1 \subset A_2$ and $B_0 \subset B_1 \subset B_2$ are $\TT^2$-categorical sequences for $\zS^3$ and $\zS^5$ respectively. Consider the sequence 
\begin{equation}\tag{$\star$}\label{seq}
 C_0 \subset C_1 \subset C_2 \subset C_3 
\end{equation}
where 
$$ C_0 =\emptyset, \quad C_1 = A_1 \times B_1, \quad C_2=A_2 \times B_1 \cup A_1 \times B_2 ,\quad\mbox{and}\quad C_3=A_2 \times B_2 = X.$$ 
Although $\zS^3$, $\zS^5$ and $X$ satisfy the conditions in Theorem 3.15 in \cite{CG}, we show that 
$$C_2 - C_1 = (A_2 -A_1) \times B_1 \cup A_1 \times (B_2 - B_1)$$ 
does not sit in any $\TT^2$-categorical set of $X$, and therefore \eqref{seq} is not a $\TT^2$-categorical sequence. 

Let $\zS_1^1$ and $\zS_2^1$ be the circle subgroups of $\TT^2$ determined by the standard vectors $e_1$ and $e_2$ in $\ZZ^2$ respectively. Let $x = ((1,0),(0,0,1))$ and $y = ((0, 1),(0,0,-1))$. Note that 
 $$\oO(x) \subset(A_2-A_1) \times B_1 \quad\mbox{and}\quad \oO(y) \subset A_1 \times (B_2 -B_1) .$$ 
Also for isotropy groups we have, $ \TT^2_x = \zS_2^1$ and $\TT^2_y =\zS_1^1$. Suppose there
exists $z \in X$ with $\TT^2$-paths from
$\oO(x)$ to $\oO(z)$ and from $\oO(y)$ to $\oO(z)$. By Lemma \ref{ht}, $\zS_1^1$ and $\zS_2^1$ are subgroups
 of $\TT^2_z$.
Thus $z$ is a fixed point. But $\TT^2$-action on $X$ has no fixed point, therefore by Lemma \ref{ghp} there is no $\TT^2$-categorical subset in $X$ containing $C_2-C_1$. This contradicts the arguments in the proof of Theorem 3.15 in \cite{CG}. 

Here we show that $ cat_{\TT^2}(\zS^3 \times \zS^5) = 4 $. Clearly $ U_1 \times V_1 $, $ U_1 \times V_2 $, $ U_2 \times V_1$, and $U_2 \times V_2$ form a $\TT^2$-categorical cover for  $\zS^3 \times \zS^5$. Hence $ cat_{\TT^2}(\zS^3 \times \zS^5) \leq  4  $. On the other hand according to orbit types of $\TT^2$-action on $\zS^3 \times \zS^5$, one can show that the isotropy groups are either trivial or homeomorphic to $\zS^1$. So by using Theorem 3.7 in \cite{HT}, it is enough to show that
$$ cat_{\TT^2}(\zS^1 \times \zS^3) \geq  2.  $$
By looking at homology groups, it is clear that $\zS^1 \times \zS^3$ cannot contract to an orbit. Hence $ cat_{\TT^2}(\zS^1 \times \zS^3) $ cannot be one. Thus
$$ cat_{\TT^2}(\zS^3 \times \zS^5) \geq  cat_{\TT^2}(\zS^1 \times \zS^3) +  cat_{\TT^2}(\zS^1 \times \zS^3) \geq 4 .$$

\end{example}

\begin{example}[Counterexample to Theorem 3.16 in \cite{CG}]\label{e5}
Let $M$ and $N$ be $2m$ and $2n$ dimensional quasitoric manifolds over the polytopes $P$ and $Q$
respectively. Then $M \times N$ is a $4mn$-dimensional quasitoric manifold over $P \times Q$.
By Theorem \ref{cat-quasi},
$$ cat_{\TT^m \times \TT^n}(M  \times N)= \big| V(P \times Q) \big| = \big| V(P) \big| \times  \big| V(Q) \big| = cat_{\TT^m}(M) \times cat_{\TT^n}(N).$$

Note that $M$ is a $\TT^m$-manifold, $N$ is a $\TT^n$-manifold, and $M \times N$ is a $\TT^m\times \TT^n$-manifold. Also $M \times N$ is a compact metrizable space, so it is completely normal.
\end{example}
\begin{example}\label{e3}
We adhere the notation of Example \ref{33}. Let 
$$ V_1 = \zS^{2n} -\{(0,\cdots,0, -1)\} \quad , \quad V_2=\zS^{2n}-\{(0,\cdots,0,1)\}.$$
Since $V_1$ and $V_2$ are equivariantly contractible to the fixed points $(0, \cdots, 0,1)$ and $(0,\cdots,0,-1)$ respectively,
so they are $\TT^n$-categorical subset of $\zS^{2n}$. 
Thus $cat_{\TT^n}(\zS^{2n}) =2.$ In particular $cat(S^{2n})=2$, since $\zS^{2n}$ is not contractible.
\end{example}

\begin{example}
Let $p>0, q_1,\ldots, q_n$ be integers such that $p$ and $q_i$ are relatively prime for all $i=1,\ldots, n$.
Consider $$\zS^{2n+1}=\{(z_1,\ldots, z_{n+1})\in \mathbb C^{n+1} :\; |z_1|^2+\cdots + |z_{n+1}|^2=1\}.$$
The {\em  $(2n+1)$-dimensional lens space}  $L = L(p;q_1,\ldots, q_n)$  is the orbit space $\zS^{2n+1}/\mathbb Z_p$ where $\mathbb Z_p$-action on $\zS^{2n+1}$  is defined by
$$\theta\colon \mathbb Z_p\times \zS^{2n+1}\to \zS^{2n+1},$$
$$([k], (z_1,\ldots, z_{n+1}))\mapsto (e^{2kq_1\pi \sqrt{-1}/p}z_1, \ldots, e^{2kq_n\pi \sqrt{-1}/p}z_n, e^{2k\pi \sqrt{-1}/p}z_{n+1}).$$
The equivalence class of $(z_1, \ldots, z_{n+1})$ is denoted by $[z_1, \ldots, z_{n+1}]$. The $(n+1)$-dimensional
compact torus $\TT^{n+1}$ acts on $L$ by:
\begin{equation}\label{tlens}
 (t_1, \ldots, t_{n+1}) \times [z_1, \ldots, z_{n+1}] \to [t_1z_1, \ldots, t_{n+1}z_{n+1}].
\end{equation}
Let $e_1, \ldots, e_{n+1}$ be the standard vectors in $\CC^{n+1}$, and $[e_i]$ be the equivalence class of $e_i$
in $L$. The orbit of $[e_i]$ is $\oO_i = \{[0, \ldots, 0, z_i, 0, \ldots, 0] : |z_i|=1\}$.
From the action in Equation (\ref{tlens}) $\oO_1, \ldots, \oO_{n+1}$ are the only orbits of dimension one
and there is no orbit of dimension less than one.  Suppose there are $\TT^{n+1}$-paths from
$\oO_i$ to $\oO(z)$ and from $\oO_j$ to $\oO(z)$ for some $z \in L $ with $i \neq j$.
So we get inclusions of isotropy groups,
$$\TT^{n+1}_{e_i} \subset \TT^{n+1}_z \quad \mbox{and} \quad \TT^{n+1}_{e_j} \subset \TT^{n+1}_z.$$
Thus $\TT^{n+1}_z =\TT^{n+1}$, since $i \neq j$. This contradicts the fact that $\TT^{n+1}$-action on
$L$ has no fixed point.
By Lemma \ref{ghp}, $\oO_i$ and $\oO_j$ can not
belong to same $\TT^{n+1}$-categorical subset of $L$. Thus $$cat_{\TT^{n+1}}(L) \geq n+1.$$

Let 
$$U_i=\{[z_1, \ldots, z_{n+1}] \in L : z_i \neq 0\}, \quad \text{ for }\; i=1, \ldots, n+1.$$
Then $U_i$ is invariant open subset of $L$. It is not difficult to show that
$U_i$ is a $\TT^{n+1}$-categorical set containing $\oO_i$. Since $U_1,
\ldots, U_{n+1}$ covers $L$, $cat_{\TT^{n+1}}(L) \leq n+1$.
Hence $$cat_{\TT^{n+1}}(L)= n+1.$$ 
\end{example}

{\bf Acknowledgement.} The authors would like to thank Professor Donald Stanley and the reviewers for the very
 helpful comments and suggestions. 
The second author thanks Pacific Institute for Mathematical Sciences and University of Regina for financial support.


\bibliographystyle{abbrv}

\bibliography{Myref}

\end{document}